\theoremstyle{plain}
\newtheorem{thm}{Theorem}
\newtheorem{cor}[thm]{Corollary}
\newtheorem{lem}{Lemma}
\newtheorem{prop}[lem]{Proposition}
\theoremstyle{definition}
\newtheorem{remark}[lem]{Remark}
\newtheorem{exam}[lem]{Example}
\definecolor{orange}{RGB}{255,127,0}
\numberwithin{lem}{section}
\numberwithin{equation}{section}
\newcommand{\R}{\mathbb{R}}
\newcommand{\Q}{\mathbb{Q}}
\newcommand{\N}{\mathbb{N}}
\newcommand{\Sy}{\mathbb{S}^\d}
\renewcommand{\d}{d}
\newcommand{\m}{q}
\newcommand{\p}{\alpha}
\newcommand{\Rd}{\mathbb R^\d}
\newcommand{\Qd}{\mathbb Q^\d}
\newcommand{\Zd}{\mathbb Z^\d}
\newcommand{\ep}{\varepsilon}
\newcommand{\barH}{\overline{H}}
\newcommand{\barL}{\overline{L}}
\newcommand{\mm}{\overline{m}}
\newcommand{\E}{\mathbb{E}}
\newcommand{\Prob}{\mathbb{P}}
\renewcommand{\tilde}{\widetilde}
\renewcommand{\P}{\mathbb{P}}
\newcommand{\F}{\mathcal{F}}
\renewcommand{\O}{\Omega}
\newcommand{\indc}{\mathds{1}}
\DeclareMathOperator*{\osc}{osc}
\DeclareMathOperator{\tr}{tr}
\DeclareMathOperator{\USC}{USC}
\DeclareMathOperator{\LSC}{LSC}
\DeclareMathOperator{\BUC}{BUC}
\def\XXint#1#2#3{{\setbox0=\hbox{$#1{#2#3}{\int}$}
     \vcenter{\hbox{$#2#3$}}\kern-.5\wd0}}
\begin{document}

\title[Stochastic homogenization of viscous Hamilton-Jacobi equations]{Stochastic homogenization of viscous Hamilton-Jacobi equations and applications}

\author[S. N. Armstrong]{Scott N. Armstrong}
\address{CEREMADE (UMR CNRS 7534), Universit\'e Paris-Dauphine, Paris, France}
\email{armstrong@ceremade.dauphine.fr}
\author[H. V. Tran]{Hung V. Tran}
\address{Department of Mathematics\\
The University of Chicago\\ 5734 S. University Avenue Chicago, Illinois 60637, USA}
\email{hung@math.uchicago.edu}
\date{\today}
\keywords{stochastic homogenization, Hamilton-Jacobi equation, quenched large deviations principle, diffusion in random environment, weak coercivity, degenerate diffusion}
\subjclass[2010]{35B27, 60K37}

\begin{abstract}
We present  stochastic homogenization results for viscous Hamilton-Jacobi equations using a new argument which is based only on the subadditive structure of maximal subsolutions (solutions of the ``metric problem"). This permits us to give qualitative homogenization results under very general  hypotheses: in particular, we treat non-uniformly coercive Hamiltonians which satisfy instead a weaker averaging condition. As an application, we derive a general quenched large deviations principle for diffusions in random environments and with absorbing random potentials. 
\end{abstract}

\maketitle

\section{Introduction} \label{I}

\subsection{Motivation and informal summary of results}

In this paper we consider the \emph{qualitative} stochastic homogenization of second-order, ``viscous" Hamilton-Jacobi equations. We present a new, short and self-contained argument which yields homogenization under very general and essentially optimal hypotheses. Our framework includes a class of equations for which the homogenization result has an equivalent formulation in probabilistic terms as a quenched large deviations principle (LDP) for diffusions in random environments (and/or with random obstacles), and so a corollary of our analysis is a very general such LDP for such problems which unifies many previous results on the topic.

In its time-dependent form, the viscous Hamilton-Jacobi equation we consider is
\begin{equation} \label{e.VHJ}
u^\ep_t - \ep \tr\left( A\left(\frac x\ep \right) D^2u^\ep \right) + H\!\left( Du^\ep,\frac x\ep \right) = 0 \quad \mbox{in} \ \Rd \times (0,\infty).
\end{equation}
Here $D\phi$ and $D^2\phi$ denote the gradient and Hessian of a real-valued function $\phi$, and $\tr B$ is the trace of a $d$-by-$d$ matrix $B$. The coefficients~$A$ and~$H$ are called the \emph{diffusion matrix} and the \emph{Hamiltonian}, respectively, and are assumed to be stationary-ergodic random fields. That is, they are randomly selected from the set of all such equations by an underlying probability measure which is stationary  and ergodic with respect to $\Rd$--translations. The essential structural hypotheses on the coefficients are that~$A$ takes values in the nonnegative definite matrices (and in particular may be degenerate or even vanish) and~$H$ is \emph{convex} and \emph{growing superlinearly} in its first variable. See below for some important examples of the equations which fit into our framework.

\smallskip

The presence of the~$\ep$ factor in the diffusion term of~\eqref{e.VHJ} gives the equation a critical scaling, and it turns out that it behaves like a first-order Hamilton-Jacobi equation in the limit~$\ep\to0$. Indeed, rather than providing any useful regularizing effect, the diffusion term actually makes the analysis more difficult compared to the pure first-order case by destroying localization effects (such as the finite speed of propagation). Also notice that, while we choose to write the principal part of~\eqref{e.VHJ} in nondivergence form, thanks to the scaling of the equation, our study also covers the case of equations with principal part in divergence form. Indeed, we may  rewrite an equation with principal part divergence form, at least in the case that the diffusion matrix is sufficiently smooth (on the microscopic scale) in the form of~\eqref{e.VHJ} by simply expanding out the divergence, observing that the $\ep$'s cancel, and absorbing the new first-order drift term into the Hamiltonian. 

\smallskip

The archetypical result of almost-sure, qualitative homogenization for~\eqref{e.VHJ} is that there exists a \emph{deterministic}, constant-coefficient equation
\begin{equation} \label{e.homg}
u_t + \overline H(Du)  =  0 \quad \mbox{in} \ \Rd \times (0,\infty)
\end{equation}
such that, subject to an appropriate initial condition, $u^\ep$ converges locally uniformly, as $\ep \to 0$ and with probability one, to the solution $u$ of~\eqref{e.homg}. The nonlinearity $\overline H$,  called the \emph{effective Hamiltonian}, depends on~$\P$ but is a deterministic quantity. It inherits convexity and superlinearity from the heterogeneous Hamiltonian. Its fine qualitative properties encode information regarding the behavior of solutions of the heterogeneous equation~\eqref{e.VHJ}. In the particular case corresponding to quenched large deviation principles for diffusions in random environments, $\overline H$ is, up to a constant, the Legendre-Fenchel transform of the rate function (see below for a more details).

\smallskip

The first qualitative homogenization results of this type for second-order equations, asserting that~\eqref{e.VHJ} homogenizes to a limiting equation of the form of~\eqref{e.homg}, were proved independently by~Kosygina, Rezakhanlou and Varadhan~\cite{KRV} and Lions and Souganidis~\cite{LS3}. Earlier homogenization results for first-order equations (i.e., $A\equiv 0$) in the random setting are due to Souganidis~\cite{S} and Rezakhanlou and Tarver~\cite{RT} and subsequent work can be found in~\cite{AS1,LS2,LS3}.

\smallskip

In this paper we present a new proof of homogenization which applies to a wider class of equations. The idea is to apply of the subadditive ergodic theorem to certain \emph{maximal subsolutions}, thereby obtaining a deterministic limit and a candidate for $\overline H$, and then recovering the full homogenization result by deterministic comparison arguments. The approach is simple and more or less self-contained (the reader may consult our recent paper~\cite{AT} for the necessary deterministic PDE theory) and yields a very general qualitative homogenization theorem under essentially optimal hypotheses. In addition to recovering all of the known cases, we can also treat for the first time general Hamiltonians which are not necessarily uniformly coercive. An essential characteristic of~\eqref{e.VHJ} is that $p\mapsto H(p,y)$ exhibits super linear growth in $p$, and this is typically assumed to be uniform in $x$. Here we can treat Hamiltonians satisfying an \emph{averaged} coercivity condition which is not uniform in~$x$.

\smallskip

But the most important feature of the method is that, unlike previous approaches, our proof of homogenization is \emph{quantifiable}. This will be demonstrated in the forthcoming paper of the first author and Cardaliaguet~\cite{AC}. Much recent effort has been put into obtaining quantitative stochastic homogenization results, for example, estimates for the difference $u^\ep -u$, rigorous bounds for computational methods for computing effective coefficients, and so on. For first-order Hamilton-Jacobi equations, quantitative stochastic homogenization results were recently obtained by Armstrong, Cardaliaguet and Souganidis~\cite{ACS}, who quantified the convergence proof of Armstrong and Souganidis~\cite{AS2}.  Unfortunately, the method of~\cite{AS2} is known to not be applicable in the viscous case, as the presence of the diffusion term generates significant additional difficulties. From this point of view, the results in this paper can be considered as the completion of the idea which originated in~\cite{AS2}.

\subsection{Statement of the main results}
\label{ss.hypos}

We begin by defining ``the set of all equations" by specifying some structural conditions on the coefficients. We work with parameters $\m> 1$, $n\in \N$ and $\Lambda_1\geq 1$ and $\Lambda_2 \geq 0$, which are fixed throughout the paper. 

We require the coefficients to be functions $A:\Rd\to \Sy$ (here $\Sy$ denotes the set of $d$-by-$d$ real symmetric matrices) and $H:\Rd \times \Rd \to \R$ satisfying the following conditions: first, the diffusion matrix has a Lipschitz square root. Precisely, we assume that there exists a function $\sigma :\Rd \to \R^{n\times d}$ such that
\begin{equation*} \label{}
A = \frac12\sigma^t \sigma,
\end{equation*}
where $\sigma$ is bounded and Lipschitz: for every $y,z\in \Rd$,
\begin{equation}\label{e.sigbnd}
\left| \sigma(y)\right| \leq \Lambda_2
\end{equation}
and
\begin{equation}\label{e.siglip}
\left| \sigma(y) - \sigma(z) \right| \leq \Lambda_2 |y-z|.
\end{equation}
(Here $\R^{n\times d}$ is the set of real $n$-by-$d$ matrices.)

Regarding the Hamiltonian, we assume that, for every $y\in \Rd$,
\begin{equation}\label{e.Hconvex}
p\mapsto H(p,y) \quad \mbox{is convex,}
\end{equation}
for every $R>0$, there exist constants $0<a_R\leq 1$ and $M_R \geq 1$ such that, for every $p,\hat p\in \Rd$ and $y,z\in B_R$,
\begin{equation}\label{e.Hsubq}
 a_R |p|^{\m} - M_R \leq H(p,y) \leq \Lambda_1\big( |p|^\m+1 \big),
\end{equation}
\begin{equation}\label{e.HsubqLip}
\left| H(p,y) - H(p,z) \right| \leq \big( \Lambda_1 |p|^\m + M_R \big) |y-z|, 
\end{equation}
and
\begin{equation}\label{e.HsubqDp}
\left| H(p,y) - H(q,y) \right| \leq \Lambda_1 \big( |p| + |\hat p| + 1 \big)^{\m-1} |p-\hat p|. 
\end{equation}

We define the probability space $\Omega$ to be the set of ordered pairs $(\sigma,H)$ satisfying the above conditions:
\begin{equation*} \label{}
\Omega:= \big\{ (\sigma,H) \,:\, \mbox{$\sigma$ and $H$ satisfy~\eqref{e.Hconvex},~\eqref{e.Hsubq},~\eqref{e.HsubqLip},~\eqref{e.HsubqDp},~\eqref{e.sigbnd} and~\eqref{e.siglip}}\big\}.
\end{equation*}
We may write $\Omega=\Omega(\m,n,\Lambda_1,\Lambda_2)$ if we wish to emphasize the dependence of $\Omega$ on the parameters.

We endow the set $\Omega$ with the following $\sigma$--algebra $\F$:
\begin{equation*} \label{}
\F:= \mbox{$\sigma$--algebra generated by $(\sigma,H)\mapsto \sigma(y)$ and $(\sigma,H) \mapsto H(p,y)$, with $p,y\in \Rd$. }
\end{equation*}

The \emph{random environment} is modeled by a probability measure $\P$ on $(\Omega,\F)$. The expectation with respect to $\P$ is denoted by $\E$. We assume that $\P$ is stationary and ergodic with respect to the action of $\Rd$ on $\Omega$ given by translation. To be precise, we let $\{ \tau_z \}_{z\in\Rd}$ be the group action of translation on $\Omega$ defined by
\begin{equation*} \label{}
\tau_z (\sigma,H) := \left(\tau_z\sigma, \tau_z H\right), \quad \mbox{where} \quad (\tau_z\sigma)(y) := \sigma(y+z) \quad \mbox{and} \quad (\tau_zH)(p,y) := H(p,y+z).
\end{equation*}
We extend this to $\F$ by setting, for every event $F \in \F$, 
\begin{equation*} \label{}
\tau_zF:= \big\{ \tau_z\omega\,:\, \omega\in F\big\}.
\end{equation*}
The stationary--ergodic hypothesis is that
\begin{equation} \label{e.stat}
\mbox{for all $y\in \Rd$ and $F \in \F$,} \quad \P\left[ \tau_yF\right] = \P\left[ F \right] \qquad \mbox{(stationarity)}
\end{equation}
and, for all $F\in \F$,
\begin{equation} \label{e.erg}
\bigcap_{z\in\Rd} \tau_zF=F \quad  \mbox{implies that} \quad \P\left[F\right]\in \{ 0 ,1\} \quad \mbox{(ergodicity)}.
\end{equation}
The final assumption we impose on $\P$ is a \emph{weak coercivity} condition: there exists an exponent $\alpha > d$ such that 
\begin{equation} \label{e.wkcoer}
\E \left[\left(\frac{ \Lambda_2 }{a_1}\right)^{2\alpha/(m-1)}  + \left( \frac{M_1}{a_1}\right)^{\alpha/ m}\right]  < +\infty.
\end{equation}
We emphasize that $\Lambda_2\geq 0$ is a constant but $0<a_1\leq 1$ and $M_1\geq 1$ are random variables. 

\begin{remark}
We emphasize that, in contrast to $\m$, $n$, $\Lambda_1$ and $\Lambda_2$, the positive constants $a_R$ and $M_R$ in the assumptions~\eqref{e.Hsubq} and~\eqref{e.HsubqLip} depend on $H$ itself, that is, they are random variables on~$\Omega$. To make this precise, for each $\omega=(\sigma,H)\in \Omega$, we redefine $M_R(\omega)$ to be the smallest constant not smaller than~1 for which~\eqref{e.HsubqLip} holds in $B_R$; we then redefine $a_R(\omega)$ to be the largest constant not larger than~1 for which~\eqref{e.Hsubq} holds in $B_R$. We denote 
\begin{equation*} \label{}
a_R(x,\omega):= a_R(\tau_x\omega)\quad \mbox{and} \quad M_R(x,\omega) := M_R(\tau_x\omega)
\end{equation*}
We drop the dependence on $\omega$  from the notation where possible, e.g., $a_R(x,\omega) = a_R(x)$. 
\end{remark}

We present the main homogenization result in terms of the initial-value problem
\begin{equation} \label{mainIVP}
\left\{ \begin{aligned}
& u^\ep_t - \ep \tr\left( A\left(\frac x\ep \right) D^2u^\ep \right) + H\!\left( Du^\ep,\frac x\ep \right) = 0 &  \mbox{in} & \ \Rd \times (0,\infty), \\
& u^\ep = g & \mbox{on} & \ \Rd \times \{ 0 \}.
\end{aligned} \right.
\end{equation}
Here the initial data $g$ is a given element of $\BUC(\Rd)$, the set of bounded and uniformly continuous real-valued functions on $\Rd$, and the unknown function $u^\ep$ depends on $(x,t)$ as well as $g$ and the coefficients $\omega=(\sigma,H)$. We typically write $u^\ep(x,t,g,\omega)$, or often simply $u^\ep(x,t,g)$ or $u^\ep(x,t)$. As explained in~Section~\ref{H}, under our assumptions, the problem~\eqref{mainIVP} has a unique viscosity solution (subject to an appropriate growth condition), almost surely with respect to $\P$. In fact, it is defined by the formula~\eqref{uepform2} below. We remark that all differential equations and inequalities in this paper, including the ones above, are interpreted in the \emph{viscosity} sense; see Remark~\ref{viscosity}.  

In the main result, we identify a continuous, convex $\overline H:\Rd\to\R$ and show that, as~$\ep \to 0$, the solutions~$u^\ep$ of~\eqref{mainIVP} converge,~$\P$--almost surely, to the unique solution of 
\begin{equation} \label{homgIVP}
\left\{ \begin{aligned}
& u_t + \overline H( Du ) = 0 &  \mbox{in} & \ \Rd \times (0,\infty), \\
& u = g & \mbox{on} & \ \Rd \times \{ 0 \}.
\end{aligned} \right.
\end{equation}
That the latter has a unique solution is a consequence of the properties of $\overline H$ summarized in Lemma~\ref{Hbarpropts} (see Section~\ref{H} for more details). 

We now present the statement of the main homogenization theorem. 

\begin{thm}\label{Hg}
Let $(\Omega,\F)$ be defined as above for fixed constants $m>1$ and $\Lambda_1,\Lambda_2>0$. Suppose that $\P$ is a probability measure on $(\Omega,\F)$ satisfying~\eqref{e.stat},~\eqref{e.erg} and~\eqref{e.wkcoer}. Then there exists a convex $\overline H\in C(\Rd)$ satisfying, for some constants $C,c>0$,  
\begin{equation*} \label{}
c(|p|^m-C)\leq \overline H(p) \leq C (|p|^m+1)
\end{equation*}
with the following property: with $u^\ep(x,t,g,\omega)$ defined by~\eqref{uepform2} and denoting by $u=u(x,t,g)$ the unique solution of~\eqref{homgIVP}, we have 
\begin{equation*} \label{}
\P\left[ \forall g\in \BUC(\Rd), \ \forall R>0, \   \limsup_{\ep \to 0} \sup_{(x,t)\in B_R\times[0,R)} \left| u^\ep(x,t,g) - u(x,t,g) \right| =0 \right] =1.
\end{equation*}
 \end{thm}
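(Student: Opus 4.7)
My plan is to identify $\overline H$ as the Legendre dual of a deterministic subadditive limit arising from a metric problem, and then to deduce the homogenization statement by a deterministic comparison argument. For each level $\mu$ sufficiently large, I would introduce, for $x,y \in \Rd$, the maximal subsolution
\begin{equation*}
m_\mu(x,y,\omega) := \sup\left\{ w(x) \,:\, w \in \USC(\Rd),\ w(y) \leq 0,\ -\tr\!\left(A(z) D^2 w\right) + H(Dw,z) \leq \mu \text{ in } \Rd \right\}
\end{equation*}
of the metric (``cell'') PDE with a pointwise pinning at $y$. The key structural feature is subadditivity, obtained by gluing two maximal subsolutions at an intermediate point via pointwise maximum:
\begin{equation*}
m_\mu(x,z,\omega) \leq m_\mu(x,y,\omega) + m_\mu(y,z,\omega),
\end{equation*}
while stationarity of $\P$ yields $m_\mu(x+z,y+z,\tau_z\omega) = m_\mu(x,y,\omega)$. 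Comparison with explicit cones built from the bounds~\eqref{e.Hsubq} controls $m_\mu$ pointwise by the random coefficients $a_R$ and $M_R$.

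Next I would invoke the multiparameter subadditive ergodic theorem. Combined with~\eqref{e.stat}--\eqref{e.erg} and the $L^1(\P)$ bound on $m_\mu(0,e,\omega)$ ensured by~\eqref{e.wkcoer}, it produces, for each $\mu$, a deterministic, convex, positively $1$-homogeneous $\overline m_\mu:\Rd \to \R$ with
\begin{equation*}
\ep\, m_\mu\!\left(0, \tfrac{x}{\ep}, \omega\right) \longrightarrow \overline m_\mu(x) \qquad \P\text{-a.s., locally uniformly in $x$.}
\end{equation*}
Because $\overline m_\mu$ is sublinear it is the support function of a closed convex set $K_\mu \subset \Rd$, which is monotone nondecreasing in $\mu$; I would then define
\begin{equation*}
\overline H(p) := \inf\left\{ \mu \in \R \,:\, p \in K_\mu \right\} = \inf\left\{ \mu \in \R \,:\, \overline m_\mu(x) \geq p\cdot x \text{ for all } x \in \Rd \right\}.
\end{equation*}
This $\overline H$ is convex; the quantitative bounds $c(|p|^m-C) \leq \overline H(p) \leq C(|p|^m+1)$ follow from the corresponding bounds on $\overline m_\mu$ inherited from~\eqref{e.Hsubq}.

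The third step turns the metric-problem convergence into the convergence of $u^\ep$. Using the deterministic representation developed in~\cite{AT}, one can sandwich $u^\ep$ between envelopes obtained by taking infima over initial points $y$ of functions of the form $g(y) - \ep m_\mu(x/\ep, y/\ep, \omega) - t\mu$ and analogous super-solution envelopes; passing to the limit via the previous step, these converge to the corresponding envelopes built from $\overline m_\mu$, which are precisely the Hopf--Lax-type representations of the unique solution of~\eqref{homgIVP} via the convex duality between $\overline m_\mu$ and $\overline H$. A density argument in $g \in \BUC(\Rd)$, combined with taking $\mu$ in a countable dense subset of $\R$, then lets me choose a single $\P$-null event outside which the convergence holds simultaneously for every $g$ and every $R>0$.

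The main obstacle, and the essentially new feature over~\cite{KRV,LS3}, is the weak coercivity hypothesis~\eqref{e.wkcoer}. Without uniform-in-$x$ coercivity, subsolutions of the metric problem have random, possibly unbounded Lipschitz constants, and the rescaled family $\{\ep\, m_\mu(0, \cdot/\ep, \omega)\}$ is not pointwise equi-Lipschitz. The role of the integrability exponent $\alpha > d$ in~\eqref{e.wkcoer} is precisely to convert $L^\alpha(\P)$-control of the random coefficients $\Lambda_2/a_1$ and $M_1/a_1$ into a uniform modulus of continuity on macroscopic scales: by stationarity and the ergodic theorem the average of an $L^\alpha$ stationary field over balls of radius $\ep^{-1}$ is locally uniformly bounded, and the gap $\alpha > d$ allows a Morrey-type passage from such averaged $L^\alpha$ control to a deterministic modulus. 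Propagating these estimates consistently through the construction of $m_\mu$, the subadditive ergodic theorem, and the final comparison against the solution of~\eqref{homgIVP} is where the proof genuinely departs from the uniformly coercive setting and constitutes the most delicate part of the argument.
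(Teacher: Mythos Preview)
Your first two steps and the definition of $\overline H$ track the paper closely: the maximal-subsolution (metric) problem, subadditivity, the subadditive ergodic theorem, and the Morrey-plus-ergodic-theorem mechanism that converts the $L^\alpha$ bound~\eqref{e.wkcoer} with $\alpha>d$ into a macroscopic oscillation estimate are exactly what the paper does (Lemma~\ref{l.oscbnde} and Proposition~\ref{shape}). The definition $\overline H(p)=\inf\{\mu:\overline m_\mu(y)\geq p\cdot y\ \forall y\}$ is also the paper's.

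Your third step, however, is a genuine gap. You propose to sandwich $u^\ep$ directly between Hopf--Lax-type envelopes built from the $m_\mu$'s and pass to the limit, citing a ``deterministic representation developed in~\cite{AT}.'' No such representation exists there, and I do not see how to make one work in the viscous setting: Hopf--Lax formulas are a first-order phenomenon and do not survive the diffusion term. Even at the level of one-sided inequalities the natural candidates fail. The function $m_\mu(\cdot,z)$ is a subsolution in all of $\Rd$ but a \emph{super}solution only in $\Rd\setminus\overline B_1(z)$, so envelopes of the form $\inf_y\big(g(y)+\ep m_\mu(x/\ep,y/\ep)-t\mu\big)$ do not bound $u^\ep$ from above, and taking an infimum over the vertex $y$ does not preserve the supersolution property for a nonlinear equation.

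What the paper actually does after Proposition~\ref{shape} is a two-stage argument. First (Section~\ref{MAC}) it homogenizes the \emph{approximate cell problem} $w^\ep-\ep\tr(A D^2w^\ep)+H(p+Dw^\ep,x/\ep)=0$, proving $w^\ep(\cdot,p)\to-\overline H(p)$ locally uniformly, by a localized comparison of $w^\ep$ against a single $m_\mu$ with a far-away vertex. This step is delicate: for the lower bound one must reduce to \emph{exposed} points of the sublevel set $\{\overline H\leq\mu\}$ via Straszewicz's theorem (Lemma~\ref{geom}), so that $\overline m_\mu$ is differentiable in the relevant direction and $m_\mu$ is asymptotically planar there; the minimal level set $\{\overline H=\overline H_*\}$ requires a separate soft compactness argument (Lemma~\ref{flatspot}). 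Second (Section~\ref{H}) the time-dependent result is deduced from the cell-problem limit by the perturbed test function method, with an auxiliary parameter $\lambda$ and an explicit barrier construction to compensate for the lack of uniform Lipschitz bounds under weak coercivity. Your outline bypasses both mechanisms; the exposed-point/flat-spot dichotomy in particular is where the inversion from $\overline m_\mu$ back to $\overline H$ actually bites, and it does not dissolve into a global envelope formula.
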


Let us say a few words regarding the role of the weak coercivity assumption. The first thing to notice about~\eqref{e.wkcoer} is that a particular case occurs when~$\P$ is supported on the set of~$(\sigma,H)$ for which~$H$ satisfies~\eqref{e.Hsubq} and~\eqref{e.HsubqLip} for constants $a_R>0$ and $M_R> 1$ \emph{which are independent of~$R$}. We call this a \emph{uniform coercivity} condition and it is the traditional hypothesis under which homogenization results for viscous Hamilton-Jacobi equations have been obtained. From the PDE point of view, it is important because it provides uniform Lipschitz estimates for solutions, which is a starting point for the analysis. The condition~\eqref{e.wkcoer} can then be seen as a relaxation of the uniform coercivity condition, replacing it by an averaging condition. We remark that we expect the averaging condition stated here to be optimal in terms of the range of the exponent~$\alpha$. The result should not hold if we only have~\eqref{e.wkcoer} for $\alpha=d$.

There are few homogenization results in the random setting without uniform coercivity. Armstrong and Souganidis~\cite{AS1} recently proved such a result under a less general averaging condition (essentially~\eqref{e.wkcoer} with $a_1$ bounded below). They also assumed the random environment satisfied a strong mixing condition with an algebraic mixing rate assumed to be sufficiently fast, depending on the exponent $\alpha$. Similar results stated in probabilistic terms were obtained at about the same time by Rassoul-Agha, Sepp\"al\"ainen and Yilmaz~\cite{R-ASY}. In contrast to these results, we do not require any mixing condition here, merely that the environment be stationary--ergodic. 

\smallskip

We next present a model equation which fits into our framework.

\begin{exam}
Consider the particular case of the Hamiltonian 
\begin{equation} \label{e.exam1}
H(p,y) = a(y) |p|^\m - V(y),
\end{equation}
where $m>1$, the functions $a$ and $V$ are stationary--ergodic random fields which are almost surely locally Lipschitz, $V\geq 0$ and $a$ is positive and uniformly Lipschitz on $\Rd$. This of course fits under our framework, since given such a random function $H$ (together with $\sigma$) we simply take $\P$ to be the law of $(\sigma,H)$. The weak coercivity condition is satisfied in this case provided that, for some $\alpha > d$,
\begin{equation*} \label{}
\E\left[ \left( \frac{1}{a(0)} \right)^{2\alpha/(\m-1)} + \left( \frac{\| V \|_{C^{0,1}(B_1)}}{a(0)} \right)^{\p/\m} \right] < +\infty. 
\end{equation*}
If the diffusion matrix $A$ vanishes, we only need that, for some $\alpha > d$,
\begin{equation*} \label{}
\E\left[ \left( \frac{\| V \|_{C^{0,1}(B_1)}}{a(0)} \right)^{\p/{\m}} \right] < +\infty. 
\end{equation*}
In the case that $V$ is bounded and uniformly Lipschitz, we need simply that $a^{-1} \in L^p(\O)$ for some $p > \frac{2\d}{\m-1}$; if in addition there is no diffusion ($A=0$), then we just need $p > \d/\m$. Even in these relatively simple situations, the homogenization result we obtain is completely new. In the case that $a$ is bounded below, then of course we just need that $\E \left[ \| V \|_{C^{0,1}(B_1)}^p\right] < +\infty$ for some $p > d/\m$, which is better than the condition $p \geq d$ assumed in~\cite{AS1}.
\end{exam}

\begin{remark}
It is customary in the homogenization literature to hide the specifics of the probability space $\Omega$ by introducing the ``dummy variable"~$\omega$ and expressing $\sigma$ and $H$ as maps $\sigma:\Rd\times\Omega\to \Sy$ and $H:\Rd\times \Rd\times \Omega\to \R$ by identifying $\sigma(\cdot,\omega)$ and $H(\cdot,\cdot,\omega)$ with $\tilde \sigma$ and $\tilde H$, respectively, where $\omega=(\tilde \sigma,\tilde H)$. Viewed this way, the functions $A$ and $H$ are \emph{stationary} with respect to the translation group action $\{ \tau_z\}_{z\in\Rd}$ in the sense that, for every $p,y,z\in \Rd$ and $\omega\in\Omega$,
\begin{equation*}\label{}
\sigma(y,\tau_z\omega) =\sigma(y+z,\omega) \qquad \mbox{and} \qquad H(p,y,\tau_z\omega) = H(p,y+z,\omega).
\end{equation*}
While this is evidently equivalent to the formulation here, we feel that writing $\omega$ everywhere is both unsightly and unnecessary and so we avoid it wherever possible. The meaning of expressions like $\P[\,\cdots]$ and $\E[\,\cdots]$ are always quite clear from the context. Meanwhile, measurability issues are taken care of quite cleanly by the definition of $\F$ and become, in our opinion, more rather than less confusing if we display explicit dependence on $\omega$. 
\end{remark}

\subsection{A quenched LDP for diffusions in random environments}

In order to state the main probabilistic application of Theorem~\ref{Hg}, we require some additional notation. We begin first with another example of a Hamilton-Jacobi equation with random coefficients which is contained in the framework of Theorem~\ref{Hg}.

\begin{exam}
\label{exam.ldp}
With $\sigma:\R^d \to \R^{d\times d}$ as described in the hypotheses (with $n=d$) and given a random vector field $b$ and potential $V\geq 0$, we define the Hamiltonian
\begin{equation} \label{e.HforLDP}
H(p,y) = \frac12\left| \sigma p\right|^2+ b(y) \cdot p - V(y) = p \cdot Ap + b(y) \cdot p - V(y),
\end{equation}
where as usual $A=\frac12\sigma^t\sigma$. The weak coercivity condition is satisfied provided there exists $\alpha > d$ such that
\begin{equation} \label{wcLDP}
\E \left[ \left( \frac{1}{\lambda_1(A(0))} \right)^{2\alpha}    + \left(  \frac{\| V \|_{C^{0,1}(B_1)}}{\lambda_1(A(0))} \right)^{\alpha/2}  \right] < +\infty,
\end{equation}
where $\lambda_1(A) = \frac12\min_{|z|=1} |\sigma z|^2$ is the smallest eigenvalue of $A$. In this random variable is bounded below we say that $A$ is \emph{uniformly elliptic}, and in this case we need only that the potential $V$ have a finite $q$th moment for some $q>d/2$. 
\end{exam}

Throughout the rest of this subsection, we take $\sigma$, $A$, $b$ and $V$ to be as in Example~\ref{exam.ldp}. In this situation, we may identify the probability space $\Omega$ with ordered triples $(\sigma,b,V)$. 

\smallskip

We denote by $X_\cdot=\{X_t\}_{t\geq 0}$ the canonical process on $C\!\left(\R_+,\Rd\right)$. Recall that the martingale problem corresponding to $\sigma$ and $b$ has a unique solution (c.f.~\cite{SV}). This means that, for each $x\in \Rd$ and $\omega=(\sigma,b,V)\in \O$, there exists a unique probability measure $P_{x,\omega}$ on $C\!\left(\R_+,\Rd\right)$ such that, under $P_{x,\omega}$, the canonical process $X=\{ X_t\}_{t\geq 0}$ satisfies the stochastic differential equation
\begin{equation*} \label{}
\left\{ \begin{aligned}
& dX_t = \sigma\left(X_t,\omega\right) dB_t + b(X_t,\omega)dt, \\
& P_{x,\omega}\left[ X_0 =x \right] = 1,
\end{aligned} \right.
\end{equation*}
where $\{ B_t \}_{t\geq 0}$ is a $d$-dimensional Brownian motion with respect to $P_{x,\omega}$.

The main object of interest is the \emph{quenched path measure of the diffusion $X_\cdot$ in the random potential $V(\cdot,\omega)$}, which is defined, for each $x\in \Rd$, $\omega\in \O$ and $t>0$, by:
\begin{equation*} \label{}
Q_{t,x,\omega} (dv) := \frac{1}{S(t,x,\omega)} \exp\left( - \int_0^t V(X_s,\omega)\, ds \right) P_{x,\omega}(dv), 
\end{equation*}
where the normalizing factor $S(t,x,\omega)$, called the \emph{quenched partition function}, is given by
\begin{equation} \label{e.Sdef}
 S(t,x,\omega):= E_{x,\omega} \left[  \exp\left( - \int_0^t V(X_s,\omega)\, ds \right) \right].
\end{equation}
Note that $Q_{t,x,\omega}$ is a probability measure on the path space $C(\R_+;\Rd)$.

\smallskip

The physical interpretation of the quenched path measures is that $Q_{t,x,\omega}$ describes the behavior of the diffusion~$X$ in an ``absorbing" potential (in this interpretation, the half-life of a particle at position~$x$ is~$\log 2 / V(x,\omega)$) conditioned on the (exponentially unlikely event) that $X$ is not absorbed up to time $t$; the probability that the particle lives until time~$t$ is precisely~$S_{t,x,\omega}$. We note that the case that~$V\equiv 0$ is also of interest, in which case~$Q_{t,x,\omega} = P_{x,\omega}$ and our results below describe the quenched large deviations of $P_{x,\omega}$, that is, of the diffusion in the random medium with no absorption. We also remark that we may allow for $V$ taking negative values, provided that $V$ is uniformly bounded below; in the particle interpretation, negative values of $V$ correspond to the creation of particles. 

A central task in the study of diffusions in random environments is to obtain statistical information about the typical sample paths under $Q_{t,x,\omega}$. Here we are interested in information regarding the large deviations of $Q_{t,x,\omega}$ in the asymptotic limit $t\to \infty$.

\begin{cor}\label{c.ldp}
Let~$\P$ be a probability measure on $\Omega$ (which is identified with ordered tripes $(\sigma,b,V)$ as explained above) satisfying~\eqref{e.stat},~\eqref{e.erg} and~\eqref{wcLDP}. Let $\barH$ be as in the statement of Theorem~\ref{Hg} corresponding to the Hamiltonian~$H$ given in~\eqref{e.HforLDP}, and let $\barL$ be the Legendre-Fenchel transform of $\barH$, defined for $z \in \Rd$ by
\begin{equation*} \label{}
\barL(z):= \sup_{p\in\Rd} \left( p\cdot z - \barH(p) \right). 
\end{equation*}  
Then there exists $\O_0 \in \F$ with $\P[\Omega_0]=1$, such that, for every $\omega\in \O_0$, we have the following:
\begin{enumerate}
\item[(i)] For every closed set $K \subseteq \Rd$ and $x\in \Rd$,
\begin{equation} \label{ldp.uppbnd}
\liminf_{t\to \infty} -\frac1t \log Q_{t,tx,\omega} \left[ X_t\in tK \right] \geq \inf_{y\in K} \barL(x-y) + \barH(0).
\end{equation}

\item[(ii)] For every open set $U \subseteq \Rd$,
\begin{equation} \label{ldp.lwrbnd}
\limsup_{t\to \infty} -\frac1t \log Q_{t,tx,\omega} \left[ X_t\in tU \right] \leq \inf_{y\in U} \barL(x-y)+ \barH(0).
\end{equation}
\end{enumerate}
\end{cor}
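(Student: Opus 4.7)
The plan is to reduce the LDP to Theorem~\ref{Hg} via the classical Hopf--Cole logarithmic transform, which converts the exponential functionals defining $Q_{t,x,\omega}$ into viscosity solutions of a viscous Hamilton--Jacobi equation of the form~\eqref{mainIVP}, and then to extract the probability estimates by a standard approximation of indicators by exponentials of bounded uniformly continuous functions.

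For $g\in\BUC(\R^d)$ and $\ep>0$ define
\[
u^\ep(x,t,g,\omega) := -\ep\log E_{x/\ep,\omega}\!\left[\exp\!\left(-\frac{g(\ep X_{t/\ep})}{\ep}-\int_0^{t/\ep} V(X_r,\omega)\,dr\right)\right].
\]
A routine Feynman--Kac/Hopf--Cole computation shows that $u^\ep$ is the unique viscosity solution of~\eqref{mainIVP} with initial datum $g$ and Hamiltonian~$H$ given by~\eqref{e.HforLDP}. Theorem~\ref{Hg} then produces $\Omega_0\in\F$ with $\P[\Omega_0]=1$ on which $u^\ep(\cdot,\cdot,g,\omega)\to u(\cdot,\cdot,g)$ locally uniformly, for every $g\in\BUC(\R^d)$, where $u$ solves $u_t+\barH(Du)=0$ with $u(\cdot,0)=g$. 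Convexity and superlinearity of $\barH$ yield the Hopf--Lax formula $u(x,1,g)=\inf_{y\in\R^d}\{g(y)+\barL(x-y)\}$. In particular, for $g\equiv 0$ the solution is the constant $u(x,t,0)=-t\barH(0)$, and setting $\ep=1/t$ gives
\[
\frac1t\log S(t,tx,\omega)=-u^{1/t}(x,1,0,\omega)\to\barH(0),\qquad \omega\in\Omega_0,
\]
which accounts for the $\barH(0)$ term in~\eqref{ldp.uppbnd}--\eqref{ldp.lwrbnd} through the identity $-\tfrac1t\log Q_{t,tx,\omega}[X_t\in tA]=-\tfrac1t\log E_{tx,\omega}[e^{-\int_0^t V}\indc_{\{X_t\in tA\}}]+\tfrac1t\log S(t,tx,\omega)$.

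It remains to obtain two-sided estimates for $-\tfrac1t\log E_{tx,\omega}[e^{-\int_0^t V}\indc_{\{X_t\in tA\}}]$, which I would accomplish by sandwiching $\indc_{\{X_t\in tA\}}$ by exponentials $e^{-tg(X_t/t)}$ and applying the homogenization convergence to the corresponding $u^\ep$. For the upper bound (ii), fix $y_0\in U$, choose $\delta>0$ with $\overline{B}(y_0,2\delta)\subset U$, and take $g_M\in\BUC$ satisfying $0\leq g_M\leq M$, $g_M\equiv 0$ on $\overline{B}(y_0,\delta)$ and $g_M\equiv M$ off $\overline{B}(y_0,2\delta)$. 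Using $V\geq 0$ and the pointwise bound $e^{-tg_M(X_t/t)}\leq\indc_{\{X_t\in tU\}}+e^{-tM}$ gives $E_{tx,\omega}[e^{-\int V}e^{-tg_M(X_t/t)}]\leq E_{tx,\omega}[e^{-\int V}\indc_{\{X_t\in tU\}}]+e^{-tM}$; since $-\tfrac1t\log$ of the left-hand side converges to $u(x,1,g_M)\leq\barL(x-y_0)$, choosing $M>\barL(x-y_0)$ makes the $e^{-tM}$ term negligible and yields $\limsup_{t\to\infty}-\tfrac1t\log E_{tx,\omega}[e^{-\int V}\indc_{\{X_t\in tU\}}]\leq\barL(x-y_0)$; optimizing over $y_0\in U$ proves (ii). For (i), set $g_{\delta,M}(y):=M\min(1,\mathrm{dist}(y,K)/\delta)\in\BUC$; since $g_{\delta,M}\equiv 0$ on $K$, the estimate $e^{-tg_{\delta,M}(X_t/t)}\geq\indc_{\{X_t\in tK\}}$ gives $\liminf_{t\to\infty}-\tfrac1t\log E_{tx,\omega}[e^{-\int V}\indc_{\{X_t\in tK\}}]\geq u(x,1,g_{\delta,M})$, and since $g_{\delta,M}(y)\to+\infty$ as $M\to\infty$ for every $y\notin K$, the Hopf--Lax formula combined with the superlinearity of $\barL$ forces $u(x,1,g_{\delta,M})\to\inf_{y\in K}\barL(x-y)$ as $M\to\infty$, completing (i).

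The main obstacle to making this rigorous is that, for each open $U$ or closed $K$, one must apply the convergence of Theorem~\ref{Hg} to a test function $g$ depending on the target set, which a priori holds only off a $g$-dependent null set. This is handled by restricting the parameters $(y_0,\delta,M)$ to rationals, producing only countably many test functions, and invoking the ``$\forall g\in\BUC$'' quantifier in the statement of Theorem~\ref{Hg}; one then obtains the full LDP on the same $\Omega_0$ by passing from the rational parameters to arbitrary open and closed sets via standard inner/outer approximation. The remaining tasks---justifying the Hopf--Cole identity at the viscosity level, and the Hopf--Lax representation of $u$---are standard consequences of the convexity, continuity, and superlinearity of $\barH$ asserted in Theorem~\ref{Hg}.
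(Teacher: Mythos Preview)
Your proposal is correct and follows essentially the same route as the paper: the Feynman--Kac/Hopf--Cole identification of the exponential functional with the solution of~\eqref{mainIVP}, the application of Theorem~\ref{Hg}, the Hopf--Lax formula for the limit, and an approximation of the indicator of $K$ (resp.\ $U$) by exponentials of bounded uniformly continuous functions. The paper writes the initial datum as $-\log g$ with $0<g\leq 1$, $g\equiv 1$ on $K$, whereas you take the equivalent form $g_M\in\BUC$ with $g_M\equiv 0$ on $K$ and $g_M\to+\infty$ off $K$; this is a cosmetic difference. Your discussion of the ``main obstacle'' is unnecessary: the universal quantifier $\forall g\in\BUC(\R^d)$ already sits \emph{inside} the probability in Theorem~\ref{Hg}, so a single full-measure set $\Omega_0$ works for all of your test functions simultaneously, without any countability argument.
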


The proof that Theorem~\ref{Hg} implies Corollary~\ref{c.ldp} is presented in Section~\ref{QLD}.

Sznitman~\cite{Sz1} was the first to prove a quenched large deviations result like this in dimensions larger than one. Precisely, he proved Corollary~\ref{c.ldp} in the special case that $\sigma=I_\d$ is the identity matrix, $b(y,\omega) = b_0\in \Rd$ is a constant vector, and the potential $V$ is a \emph{Poissonian potential}; i.e., 
\begin{equation*} \label{}
V(y,\omega) = \int_{\Rd} W(y-z) \, d\rho(z)
\end{equation*}
where $W\in C^{\infty}_c(\Rd)$ and the locally finite measure $\rho$ has a Poissonian law (see~\cite[Theorem~4.7]{Szb}). Note that such a potential has a finite range of dependence and bounded moments. 

The first phase of the strategy followed in this paper to homogenize the Hamilton-Jacobi equation is analogous to the probabilistic approach Sznitman used to obtain the large deviation principle. His proof relied on an application of the subadditive ergodic theorem to certain quantities, essentially equivalent to  our $m_\mu$'s, to obtain deterministic limits which he called the \emph{Lyapunov exponents}, which are precisely our $\mm_\mu$'s. See also the discussion preceding Proposition~\ref{shape}.

So that the reader may see that the rate function in Corollary~\ref{c.ldp} agrees with the one in~\cite{Szb}, we note that $\min_{\Rd} \barH=\barH(0)=0$ in Sznitman's case and that the effective Lagrangian~$\barL$ may be expressed in terms of the $\mm_\mu$'s as follows:
\begin{align*} \label{}
\barL(z) & = \sup_{z\in \Rd} \left( p\cdot z - \barH(p) \right) & \mbox{(definition of $\barL$)}\\
& = \sup_{\mu > 0} \sup \left\{  p\cdot z - \barH(p) \, : \, \barH(p) \leq \mu \right\} & \mbox{(by $0= \min \barH$)} \\
& = \sup_{\mu > 0} \sup \left\{  p\cdot z - \mu \, : \, \barH(p) \leq \mu \right\}  \\ & = \sup_{\mu > 0} \left( \mm_\mu(z) - \mu \right). & \mbox{(by~\eqref{mm} below)}
\end{align*}
In the absorption-free case $V\equiv 0$, Zerner~\cite{Z} proved a result similar to Corollary~\ref{c.ldp} for random walks on the lattice $\Zd$ with i.i.d.~transition probabilities at each lattice point. He required (loosely translated into our notation) that $A$ be ``almost" uniformly elliptic: 
\begin{equation} \label{e.Zcond}
\E \left[ -\log \lambda_1(A(0,\omega))^\d \right]< \infty.
\end{equation}
This condition is much weaker than our~\eqref{wcLDP} but is compensated for by the much stronger independence assumption on the random environment.

The subject of large deviations of random walks in random environments continues to receive much attention, and the works of Sznitman and Zerner have been subsequently extended to more general settings and properties of the rate function have been studied in more depth; in particular, we refer to~Varadhan~\cite{V} and Rassoul-Agha~\cite{R-A}. See also the more recent work of Yilmaz~\cite{Y} who proves a discrete version of Corollary~\ref{c.ldp} with no absorption, $V=0$, in a quite general stationary-ergodic framework like ours with a slight strengthening of~\eqref{e.Zcond}. Finally, a large deviation result for random walks in the case of absorption, $V\not\equiv 0$, was proved recently by Rassoul-Agha, Sepp\"al\"ainen and Yilmaz~\cite{R-ASY} under the assumptions that the random environment is uniform ellipticity and strongly mixing. Admitting the proof of Corollary~\ref{c.ldp} from Theorem~\ref{Hg}, the results of~\cite{R-ASY} may be compared to those of Armstrong and Souganidis~\cite{AS1}.

Finally, we mention that the connection between large deviations and viscosity solutions of Hamilton-Jacobi equations was observed by Evans and Ishii~\cite{EI}, who studied large deviations of the occupation times of small random perturbations of ODEs. 

\subsection{Disclaimer on viscosity solutions}
\label{viscosity}
Throughout the paper, all differential equalities and inequalities are understood in the viscosity sense. For a general introduction to viscosity solutions, we refer to~\cite{CIL}. Many of the fundamental PDE results we need here are proved in~\cite{AT}, which cite many times below. Recall that the natural function space for viscosity subsolutions is the set~$\USC(X)$ of upper semicontinuous functions on domain~$X$, and for supersolutions it is the set~$\LSC(X)$ of lower semicontinuous functions on~$X$.

\subsection{Outline of the paper}
In the next section we introduce the maximal subsolutions and homogenize them using the subadditive ergodic theorem. In Section~\ref{eff}, we construct the effective Hamiltonian and study some of its basic properties. In Section~\ref{MAC} we give the proof of an intermediate homogenization result and finally prove Theorem~\ref{Hg} in Section~\ref{H}. The quenched large deviations principle is shown in Section~\ref{QLD} to be a consequence of the homogenization result.

\section{The shape theorem: homogenization of the maximal subsolutions} \label{barH}

In this section we homogenize the \emph{maximal subsolutions} of the inequality
\begin{equation} \label{e.maxsube}
-\tr \left( A(y) D^2w \right) + H( Dw,y) \leq \mu \quad \mbox{in} \ \Rd.
\end{equation}
These are defined, for each $\mu \in \R$ and $y,z\in \Rd$, by 
\begin{equation}
m_\mu(y,z):=\sup \Bigg\{ w(y) - \sup_{\overline B_1(z)} w \,:\, w \in \USC(\Rd) \ \ \mbox{satisfies~\eqref{e.maxsube}} \Bigg\}.
\end{equation}
If the admissible class in the supremum above is empty, then we take $m_\mu(y,z) \equiv -\infty$. We denote, for every $\omega=(\sigma,H) \in \Omega$, the critical parameter $h(\omega)$ for which $m_\mu$ is finite by
\begin{equation}\label{eq.h}
h:= \inf \left\{ \mu \,:\, \mbox{there exists} \ w \in \USC(\Rd) \ \ \mbox{satisfying~\eqref{e.maxsube}} \right\}.
\end{equation}
According to~\eqref{e.Hsubq}, we have $h(\omega) \leq \Lambda_1$. It is sometimes convenient to work with the quantity
\begin{equation}\label{tildemmu}
\tilde m_\mu(y,z):= \sup_{B_1(y)} m_\mu(\cdot,z).
\end{equation}

Some deterministic properties of the maximal subsolutions are summarized in the following proposition, which is proved in~\cite{AT}. See Proposition~3.1 and~Section 5 of that paper. The estimate~\eqref{e.oscloc.p} below is particularly important in our analysis, and comes from the explicit Lipschitz estimates proved in~\cite[Proposition 3.1]{AT}.

\begin{prop}[\cite{AT}]
Fix $\omega=(\sigma,H)\in \Omega$ and $\mu \geq h(\omega)$. Then, for every $z\in \Rd$, the function $m_\mu(\cdot,z)$ belongs to $C^{0,1}_{\mathrm{loc}}(\Rd\setminus \overline B_1(z)) \cap \USC(\Rd)$ and satisfies
\begin{equation}\label{m-11}
-\tr(A(y)D^2 m_\mu)+H(Dm_\mu,y)\leq \mu \quad \mbox{in}\ \Rd 
\end{equation}
as well as
\begin{equation}\label{m-12}
-\tr(A(y)D^2 m_\mu)+H(Dm_\mu,y) = \mu \quad \mbox{in}\ \Rd \setminus \overline B_1(z).
\end{equation}
There exists a constant $C>0$, depending only on $d$ and $\m$, such that, for every $y,z\in \Rd$, 
\begin{equation} \label{e.oscloc.p}
\osc_{B_1(y)} m_\mu(\cdot,z) \leq C \left[ \left( \frac{(1+\Lambda_1)^{1/2}\|\sigma \|_{C^{0,1}(B_2(y))} }{a_2(y)} \right)^{2/(\m-1)} + \left( \frac{M_2(y)+\mu}{a_2(y)} \right)^{1/\m}  \right].
\end{equation}
For every $\lambda \in [0,1]$, $\mu,\nu \geq h(\omega)$ and $y,z\in \Rd$,
\begin{equation} \label{e.concv}
m_{\lambda \mu + (1-\lambda)\nu}(y,z) \geq \lambda m_{ \mu}(y,z) + (1-\lambda)m_\nu(y,z).
\end{equation}
Finally, for every $x,y,z\in\Rd$, we have
\begin{equation} \label{e.subadd}
\widetilde m_\mu(y,z) \leq \widetilde m_\mu(y,x) + \widetilde m_\mu(x,z).
\end{equation}
\end{prop}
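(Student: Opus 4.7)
The plan is to address the four assertions separately, in order of increasing difficulty: the subadditivity \eqref{e.subadd}, the concavity \eqref{e.concv}, the PDE characterization \eqref{m-11}--\eqref{m-12}, and finally the quantitative oscillation bound \eqref{e.oscloc.p}, which I expect to be the main obstacle.

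First, \eqref{e.subadd} follows directly from the defining variational formula. For any $w \in \USC(\Rd)$ satisfying \eqref{e.maxsube}, write
\[
w(y) - \sup_{\overline B_1(z)} w = \Bigl(w(y) - \sup_{\overline B_1(x)} w\Bigr) + \Bigl(\sup_{\overline B_1(x)} w - \sup_{\overline B_1(z)} w\Bigr).
\]
The first parenthesis is bounded by $m_\mu(y,x)$ since $w$ is equally admissible for $m_\mu(\cdot,x)$, and the second by $\tilde m_\mu(x,z)$ by definition of $\tilde m_\mu$ and the fact that every point of $\overline B_1(x)$ lies in $B_1(x)$'s closure. Supping over $w$ gives $m_\mu(y,z)\leq m_\mu(y,x)+\tilde m_\mu(x,z)$, and supping the left-hand side over $y\in B_1(y_0)$ yields \eqref{e.subadd}. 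For \eqref{e.concv}, I use that $H$ is convex in $p$ by \eqref{e.Hconvex} and that $w\mapsto -\tr(A(y)D^2 w)$ is linear: given subsolutions $w_1,w_2$ corresponding to parameters $\mu,\nu$, the convex combination $\lambda w_1+(1-\lambda)w_2$ is a viscosity subsolution of \eqref{e.maxsube} with right-hand side $\lambda\mu+(1-\lambda)\nu$ (the standard convex-combination principle for viscosity subsolutions of convex Hamilton-Jacobi equations, proved via inf-convolution smoothings). Using this as a competitor in the definition of $m_{\lambda\mu+(1-\lambda)\nu}$ and then optimizing over $w_1,w_2$ delivers \eqref{e.concv}.

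The subsolution property \eqref{m-11} is the standard Perron-type fact that the USC envelope of a pointwise supremum of viscosity subsolutions is itself a subsolution. For \eqref{m-12}, I argue by contradiction: if $m_\mu$ fails to be a supersolution at some $y_0\notin \overline B_1(z)$, then on a small ball $B_r(y_0)\subset \Rd\setminus \overline B_1(z)$ one produces, by a local Perron construction with boundary data equal to $m_\mu$, a strict upward perturbation of $m_\mu$ that still solves \eqref{e.maxsube} on $B_r(y_0)$. Gluing this perturbation to $m_\mu$ on the complement gives a strictly larger admissible subsolution whose supremum over $\overline B_1(z)$ is unchanged (since $B_r(y_0)\cap \overline B_1(z)=\varnothing$), contradicting the maximality built into the definition of $m_\mu$.

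The main obstacle is the explicit oscillation estimate \eqref{e.oscloc.p}, which is a purely deterministic PDE bound. The strategy is to construct radial barriers $\psi^\pm(x) = m_\mu(y,z)\pm \Phi(|x-y|)$ for a carefully chosen concave profile $\Phi$, and to verify, using the lower coercivity $a_2 |p|^\m - M_2\leq H(p,y)$ from \eqref{e.Hsubq} together with a bound on the diffusion term obtained via the Lipschitz control \eqref{e.siglip} on $\sigma$ (a Bernstein-type computation that yields the dependence on $\|\sigma\|_{C^{0,1}(B_2(y))}$ rather than just $\Lambda_2$), that $\psi^\pm$ is a supersolution on a punctured ball about $y$; comparison with the equation \eqref{m-12} satisfied by $m_\mu(\cdot,z)$ off $\overline B_1(z)$, together with \eqref{m-11} on $\overline B_1(z)$, then delivers the oscillation bound. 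The two terms on the right of \eqref{e.oscloc.p} reflect two competing balances in the optimization over $\Phi$: when the diffusion dominates, the relation $\|\sigma\|_{C^{0,1}}^2|\Phi''|\sim a_2|\Phi'|^\m$ forces the exponent $2/(\m-1)$, while the Hamilton-Jacobi regime $\mu+M_2\sim a_2|\Phi'|^\m$ yields the exponent $1/\m$. The delicate point is that the coefficients $a_2$, $M_2$ and $\|\sigma\|_{C^{0,1}}$ depend on the base point $y$, so the construction must be purely local and must also handle uniformly the possibility that $y\in \overline B_1(z)$, in which case only \eqref{m-11} is available and comparison must be performed on a slightly enlarged ball.
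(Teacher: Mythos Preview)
The paper does not prove this proposition at all: it is quoted from the companion deterministic paper~\cite{AT}, with the explicit pointer ``See Proposition~3.1 and Section~5 of that paper'' and the remark that \eqref{e.oscloc.p} ``comes from the explicit Lipschitz estimates proved in~\cite[Proposition~3.1]{AT}.'' So there is no proof here against which to compare your proposal.

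That said, your outline tracks the standard architecture one expects in~\cite{AT}. The arguments for \eqref{e.subadd} and \eqref{e.concv} are correct and essentially forced by the definitions; the paper itself invokes exactly the convex--combination--of--subsolutions principle you describe (see the proof of Lemma~\ref{wepconca}, which cites \cite[Lemma~2.4]{AT}). Your Perron argument for \eqref{m-11}--\eqref{m-12} is the expected one. For \eqref{e.oscloc.p}, your heuristic for the two exponents is right, but be aware that a one--sided barrier comparison of the form $m_\mu \le \psi^+$ only controls $m_\mu(x,z)-m_\mu(y,z)$ from above at a \emph{fixed} center $y$; to turn this into an oscillation bound on $B_1(y)$ you must either run the barrier at every point of $B_1(y)$ (which is fine, since the constants depend only on data in $B_2(y)$) or, as~\cite{AT} actually does, derive a genuine local Lipschitz estimate via a doubling--of--variables / Ishii--Lions argument, from which \eqref{e.oscloc.p} is read off directly. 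Your phrase ``Bernstein--type computation'' is pointing at the right mechanism, but the radial--barrier packaging you sketch would need this extra step to close.
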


We define $K_\mu(y)$ to be the random variable on the right side of~\eqref{e.oscloc.p}, that is,
\begin{equation*} \label{}
K_\mu(y) := C \left[ \left( \frac{(1+\Lambda_1)^{1/2}\|\sigma \|_{C^{0,1}(B_2(y))} }{a_2(y)} \right)^{2/(\m-1)} + \left( \frac{M_2(y)+\mu}{a_2(y)} \right)^{1/\m}  \right]
\end{equation*}
so that we can write the bound~\eqref{e.oscloc.p} as
 \begin{equation} \label{e.oscloc}
\osc_{B_1(y)} m_\mu(\cdot,z) \leq K_\mu(y). 
\end{equation}
We also denote $K_\mu=K_\mu(0)$. The primary use of the weak coercivity hypothesis~\eqref{e.wkcoer} is that it implies that the~$\alpha$th moment of~$K_\mu$, which we denote by $\overline K_\mu^\alpha$, is finite for some~$\alpha>d$:
\begin{equation} \label{smack}
\overline K_\mu := \E \left[ K_\mu^\alpha \right]^{1/\alpha}  < +\infty.
\end{equation}
Note that we have used~\eqref{e.wkcoer} with $a_2$ and $M_2$ replacing $a_1$ and $M_1$, respectively, which is seen to be equivalent to~\eqref{e.wkcoer} by an easy covering argument.

As far as the dependence of $\overline K_\mu$ on $\mu$, we use $M_2\geq 1$ to check that 
\begin{equation} \label{e.Kmugrmu}
\overline K_\mu \leq \overline K_0 \left( 1 + \mu^{1/m} \right).
\end{equation}

We next use ergodicity to show that the random variable $h$ defined in~\eqref{eq.h} is, up to an event of probability zero, a deterministic constant.

\begin{lem}\label{l.detcon}
Assume that $\P$ is a probability measure on $(\Omega,\F)$ satisfying~\eqref{e.stat} and~\eqref{e.erg}. Then there exists a constant $\overline H_*\in \R$, depending on $\P$, such that 
\begin{equation} \label{e.detcon}
\P \Big[ \overline H_* = \inf\left\{ \mu \in \R\,:\,  \mbox{there exists} \ \ w \in \USC(\Rd) \ \mbox{satisfying~\eqref{e.maxsube}}  \right\}  \Big]  =1. 
\end{equation}
\end{lem}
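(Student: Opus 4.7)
The plan is the standard ``invariant-under-ergodicity'' argument. I would show that $h$ is pointwise translation invariant on $\Omega$, so that each sublevel event $F_\mu := \{\omega : h(\omega) \leq \mu\}$ is invariant under the group action $\{\tau_z\}_{z\in \Rd}$. Combined with \eqref{e.erg}, this forces $\P[F_\mu] \in \{0,1\}$ for every $\mu \in \R$, and the deterministic threshold
\begin{equation*}
\overline H_* := \inf\left\{ \mu \in \R \,:\, \P[F_\mu] = 1 \right\}
\end{equation*}
then satisfies \eqref{e.detcon}.

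The key pointwise computation is translation invariance of the admissibility class. Fix $z \in \Rd$ and $\omega = (\sigma,H) \in \Omega$, and consider the bijection $w \mapsto \tilde w := w(\cdot + z)$ of $\USC(\Rd)$ onto itself. By the chain rule applied to any smooth test function touching $w$ from above, together with $(\tau_z \sigma)(y) = \sigma(y+z)$ and $(\tau_z H)(p,y) = H(p,y+z)$, this map sends viscosity subsolutions of \eqref{e.maxsube} under $\omega$ to viscosity subsolutions of \eqref{e.maxsube} under $\tau_z \omega$ (and vice versa). Consequently $h(\tau_z \omega) = h(\omega)$ pointwise on $\Omega$, and $\tau_{-z} F_\mu = F_\mu$ for every $\mu$ and $z$.

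For measurability, I would use the representation $F_\mu = \{\omega : m_\mu(y_0,z_0;\omega) > -\infty\}$ for any fixed pair $y_0,z_0 \in \Rd$ with $|y_0 - z_0| > 1$; indeed, as soon as even one admissible $w$ exists at $\omega$, the Lipschitz bound \eqref{e.oscloc.p} ensures that $m_\mu(\cdot,z_0;\omega)$ is finite away from $\overline B_1(z_0)$. The random variable $\omega \mapsto m_\mu(y_0,z_0;\omega)$ is $\F$-measurable as a supremum over a family of subsolutions that can be reduced to a countable dense subclass (using the equi-Lipschitz bound in \eqref{e.oscloc.p} and the stability of viscosity subsolutions under suprema, as in \cite{AT}). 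Given measurability and translation invariance, \eqref{e.erg} yields $\P[F_\mu] \in \{0,1\}$ for every $\mu \in \R$, and monotonicity of $\mu \mapsto F_\mu$ gives $\P[h = \overline H_*] = 1$. The bound $\overline H_* \leq \Lambda_1$ is immediate from $w \equiv 0$ together with $H(0,\cdot) \leq \Lambda_1$ (from \eqref{e.Hsubq}), and finiteness from below follows from the a priori estimates in \cite{AT}.

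The main (minor) obstacle is the measurability step, where one must justify carefully that existence of a viscosity subsolution of \eqref{e.maxsube} is an $\F$-event by reducing the supremum defining $m_\mu$ to a countable family; the translation invariance and the subsequent application of ergodicity are then essentially formal.
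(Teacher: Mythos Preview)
Your proposal is correct and follows essentially the same approach as the paper: translation invariance of $h$ combined with ergodicity forces $h$ to be $\P$-a.s.\ constant. The only substantive difference is that the paper gives a self-contained barrier argument for $h>-\infty$ (touching any putative subsolution from above in $B_1$ by $\phi(y)=k(1-|y|^2)^{-1/(m-1)}$), whereas you defer this to~\cite{AT}; you also address $\F$-measurability more carefully than the paper, which glosses over this point.
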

\begin{proof}
Let us see that~$h$ defined in~\eqref{eq.h} is finite. We have already seen that $h \leq \Lambda_1$ by~\eqref{e.Hsubq}. To argue that $h(\omega)>-\infty$ for every $\omega=(\sigma,H)\in \Omega$, we use the test function
\begin{equation*} \label{}
\phi(y):= k \left( 1 - |y|^2 \right)^{-1/(\m-1)}.
\end{equation*}
If $k>1$ and $C>1$ are sufficiently large, depending only on $\Lambda_2$ and the constants $a_1$, $M_1$ in~\eqref{e.Hsubq} for $H$, then $\phi$ is a smooth solution of
\begin{equation*} \label{}
-\tr \left( A(y) D^2\phi \right) + H( D\phi,y) > -C \quad \mbox{in} \ B_1.
\end{equation*}
Now consider an arbitrary element $w \in \USC(\Rd)$. Since $\phi(y) \to +\infty$ as $y\to \partial B_1$, there exists $x_0\in B_1$ such that $w-\phi$ has a local maximum at $x_0$. In view of the differential inequality for $\phi$, we obtain that $w$ cannot be a subsolution of~\eqref{e.Hsubq} for any $\mu \geq -C$.

It is immediate from its definition that $h$ is invariant under the translation group action $\{ \tau_y\}_{y\in \Rd}$. By the ergodicity assumption, this implies that $\P$ assigns each of the events $\{ h>\lambda \}$ and $\{ h < \lambda \}$, for every $\lambda \in \R$, probability either zero or one. This implies that $h$ is $\P$--almost surely a constant. Taking this constant to be $\overline H_*$ yields the lemma.
\end{proof}

Our main interest lies in the asymptotic behavior of $m_\mu(y,z)$ for $|y-z| \simeq |z| \gg 1$. In the next lemma we use Morrey's inequality together with the local oscillation bound~\eqref{e.oscloc} and the ergodic theorem to prove the large scale oscillation bound $\osc_{B_{R}(Ry)}m_\mu(\cdot,z)\lesssim R$, uniformly in $z\in \Rd$ for $R\gg1$. Recall that Morrey's inequality (c.f.~\cite[Section 5.6.2]{Ebook}) states  that, for any $R>0$, $u\in C^1(B_R)$ and $\beta>\d$, there exists $C(\beta,d)>1$ such that
 \begin{equation} \label{e.morrey}
\osc_{B_R} u \leq C R \left( \fint_{B_R} \left|Du(x)\right|^\beta\,dx \right)^{1/\beta}.
\end{equation}
So we can control the oscillation of a function in terms of ``averaged pointwise oscillation bounds." Thus it is natural to attempt to control the large scale oscillation of $m_\mu(\cdot,z)$ in terms of the average of a power of its local oscillation, with the hope of using~\eqref{e.oscloc},~\eqref{smack} and the ergodic theorem to control the latter.

\begin{lem}\label{l.oscbnde}
Assume that $\P$ is a probability measure on $(\Omega,\F)$ satisfying~\eqref{e.stat},~\eqref{e.erg} and~\eqref{e.wkcoer}. Then there exists $C>0$, depending only on $d$ and $\alpha$, such that
\begin{equation} \label{e.oscbnde}
\P \left[ \forall \mu \geq \overline H_*, \ \forall x\in \Rd, \ \limsup_{R\to \infty} \sup_{z\in\Rd} \frac1R\osc_{B_R(Rx)} m_\mu(\cdot,z) \leq C\overline K_\mu \right] =1.
\end{equation}
\end{lem}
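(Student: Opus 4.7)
The plan is to combine the local oscillation bound~\eqref{e.oscloc} with Morrey's inequality~\eqref{e.morrey} and the multiparameter pointwise ergodic theorem applied to the stationary random field $y \mapsto K_\mu(y)^\alpha$. The key link between them is a pointwise gradient bound: since $m_\mu(\cdot,z) \in C^{0,1}_{\mathrm{loc}}(\Rd\setminus\overline B_1(z))$ by Proposition~2.1, the Lipschitz estimates in~\cite[Proposition 3.1]{AT} from which~\eqref{e.oscloc.p} is derived actually furnish, after possibly enlarging the universal constant absorbed into the definition of $K_\mu$, the a.e.\ pointwise gradient bound $|Dm_\mu(y,z)| \leq K_\mu(y)$ for $y \in \Rd \setminus \overline B_1(z)$.

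With that in hand, I would fix $\mu \geq \overline H_*$, $x \in \Rd$, and $z \in \Rd$, and apply Morrey's inequality~\eqref{e.morrey} with $\beta = \alpha > d$ to $m_\mu(\cdot,z)$ on $B_R(Rx)$. This produces, uniformly in $z \in \Rd$,
\begin{equation*}
\osc_{B_R(Rx)} m_\mu(\cdot,z) \;\leq\; C R \left( \dashint_{B_R(Rx)} K_\mu(y)^\alpha\,dy \right)^{1/\alpha} + o(R).
\end{equation*}
The $o(R)$ accommodates the unit-scale singular ball $\overline B_1(z)$ on which the gradient estimate fails: its volume is $O(1)$ against $|B_R(Rx)| = O(R^d)$, so its contribution is absorbed via a standard cutoff/extension argument into a term of order strictly less than $R$, independent of $z$. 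Next, since $K_\mu^\alpha \in L^1(\P)$ by~\eqref{smack}, the multiparameter pointwise ergodic theorem of Wiener--Tempel'man, applied to the F\o{}lner-type sequence $\{B_R(Rx)\}_{R\geq 1}$ (admissible because centers $Rx$ and radii $R$ scale linearly with bounded aspect ratio), yields on a full-probability event
\begin{equation*}
\lim_{R\to\infty} \dashint_{B_R(Rx)} K_\mu(y)^\alpha\,dy \;=\; \E[K_\mu^\alpha] \;=\; \overline K_\mu^\alpha.
\end{equation*}
Combining these two displays delivers the desired bound for the fixed $(\mu, x)$, uniformly in $z\in\Rd$.

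To package this into a single event of full probability on which the bound holds for all $\mu \geq \overline H_*$ and all $x \in \Rd$ simultaneously, I would apply the preceding argument on the countable set $(\mu_j, x_j) \in (\Q \cap (\overline H_*, \infty)) \times \Qd$, intersect the corresponding events, and then interpolate: in $x$ by a squeezing argument (since $B_R(Rx) \subseteq B_{R(1+\ep)}(Rx_j)$ whenever $|x - x_j| < \ep$, the ergodic limit along rationals propagates to all $x$ with constants tending to $1$ as $\ep \to 0$), and in $\mu$ using the concavity~\eqref{e.concv} (which yields monotonicity of $\mu\mapsto m_\mu$) together with the continuity of $\mu \mapsto \overline K_\mu$ furnished by~\eqref{e.Kmugrmu}. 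The main obstacle I foresee is achieving uniformity in $z \in \Rd$ in the presence of the moving singular ball $\overline B_1(z)$; fortunately its unit scale is negligible against the macroscopic radius $R$, so the singular contribution vanishes uniformly in the limit.
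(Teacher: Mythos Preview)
Your overall strategy---Morrey's inequality plus the ergodic theorem applied to $K_\mu^\alpha$---matches the paper's. The substantive difference is in how you obtain a function to which Morrey can be applied. You invoke a pointwise gradient bound $|Dm_\mu(y,z)|\le K_\mu(y)$ away from $\overline B_1(z)$ and plan to dispose of the singular ball by a ``cutoff/extension'' with error $o(R)$. The paper instead \emph{mollifies}: it sets $\widehat m_\mu = \eta * m_\mu(\cdot,z)$ and obtains $|D\widehat m_\mu(y)|\le CK_\mu(y)$ \emph{everywhere} in $\Rd$, directly from the unit-scale oscillation bound~\eqref{e.oscloc} (which is valid even across the vertex), not from any Lipschitz estimate. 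Morrey is then applied to the smooth $\widehat m_\mu$ on all of $B_R(Rx)$, and the error $|\widehat m_\mu - m_\mu|$ is controlled, again via~\eqref{e.oscloc}, by $\inf_{B_{1/2}(y)} K_\mu$, whose supremum over $B_R(Rx)$ is bounded by $C(R+1)^{d/\alpha}(\fint K_\mu^\alpha)^{1/\alpha}=o(R)$ since $d/\alpha<1$.

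Your handling of $\overline B_1(z)$ is the gap. The volume argument you give (``$O(1)$ against $O(R^d)$'') controls at best a contribution to the \emph{gradient integral}; it says nothing about the \emph{oscillation} of $m_\mu$ across the singular set, which enters the total oscillation additively regardless of volume. Since $m_\mu(\cdot,z)$ is merely $\USC$ on $\overline B_1(z)$, any extension you build will differ from $m_\mu$ there by an amount comparable to $\osc_{B_2(z)} m_\mu(\cdot,z)$, hence to $K_\mu$ evaluated near $z$. To get this to be $o(R)$ \emph{uniformly} over $z\in B_{R+2}(Rx)$ you would need $\sup_{y\in B_{R+2}(Rx)} K_\mu(y)=o(R)$; this is true, but proving it already requires the averaging argument the paper carries out (and is precisely where the hypothesis $\alpha>d$ is used). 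In short, the ``standard cutoff/extension'' step is not standard here and, once made honest, reproduces the paper's mollification estimate. The cleaner route is the paper's: base everything on the oscillation bound~\eqref{e.oscloc}, which holds across the vertex, rather than on a gradient bound that does not.
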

\begin{proof}
It is convenient to mollify the functions in order to put the local oscillation bounds into a pointwise form suitable for the application of Morrey's inequality. We first observe that, owing to Lemma~\ref{l.detcon}, we may assume that $m_\mu$ is finite for all $\mu \geq \overline H_*$ by removing an event of zero probability. 

We now fix $\mu\geq \overline H_*$, $z\in \Rd$ and take a nonnegative  $\eta \in C^\infty_c(\Rd)$ with support in $B_{1/2}$ and unit mass, $\int_{\Rd} \eta(y)\,dy=1$, and set
\begin{equation}\label{e.hatmmu}
\widehat m_\mu(y):= \int_{\Rd} \eta(y-x) m_\mu(x,z)\, dx.
\end{equation}
Then $\widehat m_\mu$ is smooth and using~\eqref{e.oscloc.p} we have, for every $y\in \Rd$,
\begin{align}\label{e.hatmubnd}
\left| \widehat m_\mu(y) - m_\mu(y,z) \right| & \leq \int_{\Rd} \eta(y-x) \left| m_\mu(x,z) - m_\mu(y,z) \right| \, dx \\ & \leq \osc_{B_{1/2}(y)} m_\mu(\cdot,z) \leq \inf_{B_{1/2}(y)} K_\mu(\cdot)\nonumber
\end{align}
and
\begin{equation}\label{e.Dhatmubnd}
\left| D\widehat m_\mu(y) \right| = \left|  \int_{\Rd} D\eta(y-x) \left( m_\mu(x,z) - m_\mu(y,z) \right)\, dx\right| \leq C K_\mu(y).
\end{equation}
Applying~\eqref{e.morrey} and then using~\eqref{e.Dhatmubnd}, we deduce the existence of $C(d,\alpha)>1$ such that, for every $x\in \Rd$,
\begin{align}\label{e.hatsosc}
\osc_{B_R(x)} \widehat m_\mu & \leq C R \left( \fint_{B_R(x)} \left| D \widehat m_\mu (y)\right|^\p\,dy \right)^{1/\p}  \leq C R \left( \fint_{B_R(x)} K_\mu^\p(y) \, dy \right)^{1/\p}.
\end{align}
Next, we return to~\eqref{e.hatmubnd} and observe that
\begin{align*}
\sup_{y\in B_R(x)} \left| \widehat m_\mu(y) - m_\mu(y,z) \right| & \leq \sup_{y\in B_R(x)} \ \inf_{x\in B_{1/2}(y)} K_\mu(x) \\
& \leq \left( \sup_{y\in B_R(x)} \ \fint_{B_{1/2}(y)} K_\mu^\alpha (x)\,dx \right)^{1/\alpha} \\
& \leq C\left( \int_{B_{R+1}(x)} K_\mu^\alpha(x)\, dx\right)^{1/\alpha} \\
& \leq C (R+1)^{d/\alpha} \left( \fint_{B_{R+1}(x)} K_\mu^\alpha(x)\,dx\right)^{1/\alpha}.
\end{align*}
Making note of the fact that $d/\alpha < 1$ and combining the above inequality with~\eqref{e.hatsosc}, we deduce that, for every $R>1$ and $x,z\in \Rd$,
\begin{equation} \label{e.osclocR}
\frac1R \osc_{B_R(x)} m_\mu(\cdot,z) \leq C\left( \fint_{B_{R+1}(x)} K_\mu^\alpha(y)\,dy \right)^{1/\alpha}.
\end{equation}
According to the ergodic theorem (c.f.~Becker~\cite{Be}),
\begin{equation*} \label{}
\P\left[ \lim_{R \to \infty} \left( \fint_{B_{R+1}(Rx)} K_\mu^\alpha(y)\,dy \right)^{1/\alpha} = \E \left[ K_\mu^\alpha \right]^{1/\alpha}  \right] = 1. 
\end{equation*}
In view of the definition of $\overline K_\mu$, the last two lines yield that, for every $\mu \geq \overline H_*$,
\begin{equation*} \label{}
\P \left[ \forall x\in \Rd,  \  \ \limsup_{R\to \infty} \sup_{z\in\Rd} \frac1R \osc_{B_R(Rx)} m_\mu(\cdot,z) \leq C \overline K_\mu \right] = 1. 
\end{equation*}
Using the monotonicity of $\mu \to m_\mu$ and the continuity of $\mu\mapsto\overline K_\mu$ and intersecting the events corresponding to all rational $\mu$ and $\mu=\overline H_*$, we obtain~\eqref{e.oscbnde}. 
\end{proof}

The following lemma is an abstract tool which allows us to obtain uniform convergence, with respect to the translation group $\{\tau_y\}_{y\in\Rd}$, for sequences of random variables which converge almost surely and satisfy appropriate oscillation bounds. The argument follows an idea of Varadhan, using a combination of Egoroff's theorem and the Ergodic theorem. 

\begin{lem} \label{eggg}
Assume $\P$ is a probability measure on $(\Omega,\F)$ satisfying~\eqref{e.stat} and~\eqref{e.erg}. Suppose that $\{ X_t\}_{t > 0}$ is a family of $\F$--measurable random variables on $\Omega$ such that 
\begin{equation*} \label{}
\P \left[ \limsup_{t \to \infty} X_t \leq 0 \right]  = 1.
\end{equation*}
Denote $X_t(y,\omega):= X_t (\tau_y\omega)$ and suppose that
\begin{equation*} \label{}
\P \left[ \forall z\in \Rd, \ \limsup_{r\to 0} \limsup_{t \to \infty} \osc_{y\in B_{tr}(tz)} X_t(y,\cdot) = 0 \right] = 1. 
\end{equation*}
Then 
\begin{equation*} \label{}
\P \left[ \forall R>0, \ \limsup_{t \to \infty} \sup_{y\in B_{tR} } X_t(y,\cdot) \leq 0 \right] = 1. 
\end{equation*}
\end{lem}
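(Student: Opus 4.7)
The plan is a contradiction argument that combines Egoroff's theorem, the multiparameter ergodic theorem, and the oscillation hypothesis. Egoroff upgrades the almost sure bound $\limsup_t X_t \leq 0$ to uniform smallness of $X_t$ on a ``good'' event $A$ of large probability; the ergodic theorem then guarantees that translates $\tau_y\omega$ hit $A$ with near-full density in any large ball; and the oscillation hypothesis lets us transfer smallness at a single good point out to an entire mesoscopic ball, producing the contradiction.

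To set things up, I would fix rationals $R>0$, $\delta>0$, $\eta>0$ and, by Egoroff applied to $\{\sup_{t\geq T}X_t\leq \delta/4\}\uparrow$ as $T\to\infty$, choose $T_{\delta,\eta}$ so that $A:=A_{\delta,\eta}=\{\sup_{t\geq T_{\delta,\eta}}X_t\leq \delta/4\}$ satisfies $\P[A]\geq 1-\eta$. Next, I would apply the ergodic theorem (Becker) to $\indc_A$ on balls $B_{tr}(tz)$: for each rational $z\in\Q^\d$ and $r\in\Q_+$, on a full-probability event $\O_{z,r}$,
\[
\lim_{t\to\infty}\frac{1}{|B_{tr}(tz)|}\int_{B_{tr}(tz)}\indc_A(\tau_y\omega)\,dy=\P[A]\geq 1-\eta.
\]
Intersecting over the countable family of parameters $(R,\delta,\eta,z,r)$ together with the hypothesis event on which the oscillation condition holds for every $z\in\R^\d$, I obtain a single a.s.\ event $\O_0$.

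The heart is the contradiction step. Suppose $\omega\in\O_0$ and, for some rational $R,\delta>0$, there exist $t_n\to\infty$ and $y_n\in B_{t_n R}$ with $X_{t_n}(y_n,\omega)>\delta$. Passing to a subsequence, $y_n/t_n\to z^*\in\overline{B_R}$. Applying the oscillation hypothesis at $z^*$ (valid on $\O_0$ even though $z^*$ need not be rational), I choose a rational $r^*>0$ small enough that $\osc_{B_{tr^*}(tz^*)}X_t\leq \delta/4$ for all large $t$, and then a rational $z$ with $|z-z^*|<r^*/4$. The triangle inequality gives $B_{tr^*/2}(tz)\subset B_{tr^*}(tz^*)$, and $y_n\in B_{t_n r^*/2}(t_n z)$ for $n$ large. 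Hence every $y\in B_{t_n r^*/2}(t_n z)$ satisfies $X_{t_n}(y,\omega)\geq X_{t_n}(y_n,\omega)-\delta/4>3\delta/4$. On the other hand, the ergodic theorem at the rational pair $(z,r^*/2)$ gives, for $n$ large and $\eta<1/2$, density at least $1-2\eta>0$ of points $y\in B_{t_n r^*/2}(t_n z)$ with $\tau_y\omega\in A$; for any such $y^*$ we have $X_{t_n}(y^*,\omega)=X_{t_n}(\tau_{y^*}\omega)\leq \delta/4$ whenever $t_n\geq T_{\delta,\eta}$, contradicting the previous lower bound. A countable intersection over rational $R,\delta>0$ closes the argument.

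The main technical subtlety is that the oscillation bound is pointwise in $z$ with no uniform-in-$z$ lower bound on the scale $r$ at which the oscillation is small; this rules out any direct finite-covering of $\overline{B_R}$ by mesoscopic balls of a single radius. The contradiction argument sidesteps this by concentrating the failure at a single point $z^*\in\overline{B_R}$, where a single instance of the oscillation condition suffices, with the passage to a nearby rational $z$ being the device that reduces the ergodic theorem step to a countable collection of centers.
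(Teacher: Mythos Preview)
Your argument is correct; it uses the same three ingredients as the paper's proof (Egoroff to produce a good set of nearly full measure, the ergodic theorem to find good translates, and the oscillation hypothesis to propagate smallness) but packages them differently. The paper proceeds directly rather than by contradiction: it first upgrades the oscillation hypothesis to one that is \emph{uniform} over $z\in B_R$ by a finite cover of $\overline{B_R}$ (contrary to your final remark, this \emph{does} work---one uses balls of variable radius $\rho(z)$ chosen pointwise from the hypothesis and then extracts a finite subcover by compactness), then applies the ergodic theorem only to origin-centered balls $B_r$ and uses the volume observation that a bad set of density $<\ep^d$ in $B_r$ cannot contain any ball of radius $\ep r$, so every $y\in B_{tR}$ has a good translate within distance $tR\ep$. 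Your contradiction/subsequence extraction replaces the covering step, and your density claim on $B_{t_n r^*/2}(t_n z)$ replaces the volume observation. One small point to tighten: the multiparameter ergodic theorem in the form cited is for balls centered at the origin, so to justify positive density of $A$-translates in the shifted ball $B_{t_n r^*/2}(t_n z)$ you should either invoke a version valid along cones, or simply observe that $B_{t_n r^*/2}(t_n z)\subset B_{t_n(r^*/2+|z|)}$, whence the bad set has density at most $2\eta\,(1+2|z|/r^*)^d$ there---still strictly less than $1$ once you pick the rational $\eta$ small relative to $R$ and $r^*$, and the contradiction goes through unchanged.
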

\begin{proof}
We first notice that, after a routine covering argument, the second hypothesis can be rewritten in a slightly stronger way as
\begin{equation} \label{secbold}
\P \left[ \forall R>0,  \ \limsup_{r\to 0} \limsup_{t \to \infty} \sup_{z\in B_R} \osc_{y\in B_{tr}(tz)} X_t(y,\cdot) = 0 \right] = 1. 
\end{equation}
By the first hypothesis, for each $\ep > 0$, there exists $T_\ep>0$ sufficiently large that
\begin{equation}\label{}
\P\left[ \sup_{t\geq T_\ep} X_t(0,\cdot) \leq \ep \right] \geq 1 - \frac12 \ep^{d}.
\end{equation}
Denote this event by $D_\ep := \left\{ \omega\in\Omega\,:\, \sup_{t\geq T_\ep} X_t(0,\omega) \leq \ep \right\}$. According to the multiparameter ergodic theorem (c.f.~\cite{Be}), for each $\ep > 0$, there exists an event $\tilde\Omega_\ep \in \mathcal F$ with $\P [\tilde \Omega_\ep ] = 1$, such that, for every $\omega\in \tilde \Omega_\ep$, 
\begin{equation}\label{}
\lim_{r\to \infty} \fint_{B_r} \indc_{D_\ep}(\tau_x\omega) \, dx = \Prob\left[ D_\ep \right] \geq 1- \frac12\ep^d. 
\end{equation}
Here $\indc_{E}$ denotes the indicator function of an event $E\in\F$.
It follows that, for each $\omega \in \tilde\Omega_\ep$, there exists $r_\ep  > 0$ sufficiently large (and depending on $\omega$ in addition to $\ep$) that
\begin{equation}\label{goodpots}
\inf_{r\geq r_\ep} \fint_{B_r} \indc_{D_\ep}(\tau_x\omega) \, dx  > 1- \ep^d. 
\end{equation}
Notice that~\eqref{goodpots} implies that, for $r\geq r_\ep(\omega)$,
\begin{equation}\label{greatpots}
\left| \left\{ y\in B_r \,:\, \tau_y\omega \in D_\ep  \right\} \right| > \big( 1-\ep^d\big) |B_r|.
\end{equation}
In particular, if $r\geq r_\ep(\omega)$ then no ball of radius $r \ep$ is contained in $\{ x \in B_r \, :\, \tau_x\omega \not\in D_\ep \}$.

Let $\tilde \Omega$ be the intersection of $\tilde \Omega_\ep$ over all positive $\ep \in \Q$. Fix $R,\ep > 0$ with $\ep \in \Q$, $\omega \in \tilde\Omega$ such that $\omega$ also belongs to the event inside the probability in~\eqref{secbold}, $t \geq R^{-1}\max\{ r_\ep(\omega) , T_\ep\}$ and $y\in B_{tR}$. Then there exists $z \in B_{R}$ such that $\tau_{tz} \omega \in D_\ep$ and $|y-tz| \leq tR\ep$. Note that $\tau_{tz} \omega \in D_\ep$ is equivalent to $X_t(tz,\omega) \leq \ep$. We deduce that
\begin{equation*} \label{}
X_t (y,\omega) \leq X_t(tz,\omega) + \osc_{x\in B_{tR\ep}(tz)} X_t(x,\omega) \leq \ep + \sup_{z'\in B_R} \osc_{x\in B_{tR\ep}(tz')} X_t(x,\omega).
\end{equation*}
This holds for all $y\in B_{tR}$, hence
\begin{equation*} \label{}
\sup_{y\in B_{tR} } X_t (y,\omega) \leq \ep + \sup_{z'\in B_R} \osc_{x\in B_{tR\ep}(tz')} X_t(x,\omega).
\end{equation*}
We have shown that, for all $\ep \in \Q$ such that $\ep > 0$, we have
\begin{equation*} \label{}
\limsup_{t \to \infty} \sup_{y\in B_{tR} } X_t (y,\omega) \leq \ep + \limsup_{t\to \infty}  \sup_{z'\in B_R} \osc_{x\in B_{tR\ep}(tz')} X_t(x,\omega).
\end{equation*}
Sending $\ep \to 0$, using that $\omega$ belongs to the event inside the probability in~\eqref{secbold}, we obtain 
\begin{equation*} \label{}
\limsup_{t \to \infty} \sup_{y\in B_{tR} } X_t (y,\omega) \leq 0. 
\end{equation*}
This conclusion applies for every $R>0$ and $\omega$ belonging to the intersection of $\tilde \Omega$ and the event in~\eqref{secbold}, which has probability one. 
\end{proof}

We next employ the subadditive ergodic theorem (c.f.~\cite{AK}) and the subadditivity of $m_\mu$ to get the following result, which asserts that, for large $t>0$, we have $m_\mu(ty,tz) \approx t \overline m_\mu(y-z) + o(t)$ for some deterministic function $\overline m_\mu$. The key ingredients in the proof are subadditivity~\eqref{e.subadd} and the local oscillation estimate~\eqref{e.oscbnde}. 

The terminology ``shape theorem" originated in first-passage percolation and ``shape" refers to the sublevel sets of $m_\mu$. In particular, the result here generalizes~\cite[Theorem ~5.2.5]{Szb} and also covers the case that $A\equiv 0$ and the Hamiltonian has the specific form $H(p,x) = a(x) |p|$ where $a>0$ is an appropriate random field, which is a continuum analogue of the first passage percolation model.

\begin{prop}[The Shape Theorem]\label{shape}
Assume $\P$ is a probability measure on $(\Omega,\F)$ satisfying~\eqref{e.stat},~\eqref{e.erg} and~\eqref{e.wkcoer}. Then there exists a family $\left\{ \overline m_\mu \,:\, \mu \geq \overline H_* \right\}\subseteq C(\Rd)$ of convex, positively homogeneous functions, such that
\begin{equation} \label{mphg}
\P\left[ \forall \mu \geq \overline H_*, \ \forall R>0, \ \limsup_{t\to \infty} \sup_{y,z\in B_R}\left| \frac{m_\mu(ty,tz)}t -  \mm_\mu(y-z) \right| = 0 \right] = 1.
\end{equation}
\end{prop}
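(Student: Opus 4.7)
I will apply the Akcoglu--Krengel subadditive ergodic theorem~\cite{AK} to the cocycle $t \mapsto \tilde m_\mu(0, tw)$ in order to obtain, for each rational direction $w$, a deterministic limit $\overline m_\mu(w)$; extend this to a convex, positively homogeneous continuous function on $\Rd$ via the subadditivity~\eqref{e.subadd}; upgrade pointwise to uniform-in-$y$ convergence (with $z = 0$) via the oscillation bound~\eqref{e.oscbnde}; and finally use Lemma~\ref{eggg} to add uniformity in $z$.

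\textbf{Subadditive limit.} Fix $\mu \geq \overline H_*$ and $w \in \Qd$. By~\eqref{e.subadd} and stationarity,
\[\tilde m_\mu(0, (s + t)w)(\omega) \leq \tilde m_\mu(0, sw)(\omega) + \tilde m_\mu(0, tw)(\tau_{sw}\omega),\]
so $t \mapsto \tilde m_\mu(0, tw)$ is a subadditive cocycle over $\{\tau_{kw}\}_{k\in\Z}$. Integrability follows from maximality of $m_\mu$ (which yields $\sup_{\overline B_1(z)} m_\mu(\cdot, z) = 0$) combined with~\eqref{e.osclocR} and~\eqref{smack}. Akcoglu--Krengel together with ergodicity~\eqref{e.erg} produces a deterministic a.s.\ limit $\overline m_\mu(w) := \lim_t \tilde m_\mu(0, tw)/t$; since $|m_\mu - \tilde m_\mu| \leq K_\mu(0)$ by~\eqref{e.oscloc}, the same limit controls $m_\mu(0, tw)/t$. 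Scaling gives positive homogeneity $\overline m_\mu(\lambda w) = \lambda \overline m_\mu(w)$ for $\lambda > 0$, and averaging~\eqref{e.subadd} (using the $L^1$ convergence in Akcoglu--Krengel and stationarity) gives $\overline m_\mu(w + w') \leq \overline m_\mu(w) + \overline m_\mu(w')$. Together these produce convexity on $\Qd$, and the linear bound $|\overline m_\mu(w)| \leq C\overline K_\mu|w|$ (also from~\eqref{e.osclocR}) extends $\overline m_\mu$ continuously to $\Rd$.

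\textbf{Uniformization.} For the case $z = 0$, I fix $R > 0$ and a $\delta$-net $\{w_i\} \subseteq \Qd \cap B_{2R}$; for $w \in B_\delta(w_i)$ the bound
\[\tfrac{1}{t}|m_\mu(tw, 0) - m_\mu(tw_i, 0)| \leq \tfrac{1}{t}\osc_{B_{t\delta + 1}(0)} m_\mu(\cdot, 0) \leq C\delta \overline K_\mu + o(1)\]
follows from~\eqref{e.oscbnde}; combined with convergence at each $w_i$ and uniform continuity of $\overline m_\mu$ on $B_{2R}$, sending $\delta \to 0$ yields $\sup_{w \in B_{2R}}|m_\mu(tw, 0)/t - \overline m_\mu(w)| \to 0$ $\P$-a.s. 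To add uniformity in $z$, I apply Lemma~\ref{eggg} to
\[X_t(\omega) := \sup_{w \in B_{2R}}\bigl|m_\mu(tw, 0)(\omega)/t - \overline m_\mu(w)\bigr|.\]
By stationarity $X_t(\tau_y\omega) = \sup_{w \in B_{2R}}|m_\mu(y + tw, y)(\omega)/t - \overline m_\mu(w)|$, and for $y, y' \in B_{tr}(tz)$ two applications of~\eqref{e.subadd} bound $|m_\mu(y + tw, y) - m_\mu(y' + tw, y')|$ by a sum of oscillations of $m_\mu$ on balls of radius $\sim tr$; each such oscillation is $\leq C tr \overline K_\mu$ for $t$ large, uniformly in $w \in B_{2R}$ and $y, y' \in B_{tr}(tz)$, by~\eqref{e.osclocR} and the multiparameter ergodic theorem (using $\alpha > d$). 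Dividing by $t$ and then letting $r \to 0$ verifies the oscillation hypothesis of Lemma~\ref{eggg}, which outputs $\sup_{y \in B_{tR}} X_t(\tau_y\omega) \to 0$ a.s. Combined with the stationarity identity $m_\mu(ty, tz)(\omega) = m_\mu(t(y - z), 0)(\tau_{tz}\omega)$, this gives~\eqref{mphg} for each $\mu \geq \overline H_*$; a countable intersection over rational $\mu$ together with monotonicity of $\mu \mapsto m_\mu, \overline m_\mu$ and continuity of $\mu \mapsto \overline K_\mu$ completes the proof.

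\textbf{Main obstacle.} The technically delicate step is verifying the oscillation hypothesis of Lemma~\ref{eggg}. The bound~\eqref{e.oscbnde} controls $\osc m_\mu(\cdot, z)/R$ for each \emph{fixed} direction $x$ uniformly in $z$, but here I must control two such oscillations simultaneously as $w \in B_{2R}$ and $y' \in B_{tr}(tz)$ range. Extracting this uniformity requires combining the pointwise estimate~\eqref{e.osclocR} with a covering argument and the spatial ergodic theorem applied to $K_\mu^\alpha$, where the strict inequality $\alpha > d$ from the weak coercivity hypothesis~\eqref{e.wkcoer} supplies precisely the margin needed to absorb the covering factor.
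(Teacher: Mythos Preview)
Your overall strategy matches the paper's proof almost step for step: subadditive ergodic theorem to construct $\overline m_\mu$, properties of $\overline m_\mu$ from subadditivity, upgrade to uniformity in $y$ via a net and the oscillation bound~\eqref{e.oscbnde}, then Lemma~\ref{eggg} to release the vertex, and finally intersect over rational $\mu$. Two points deserve attention.

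\textbf{Notational slip.} You define $\overline m_\mu(w)$ as the limit of $\tilde m_\mu(0,tw)/t$ but in the uniformization step you work with $m_\mu(tw,0)$. These are not the same quantity ($m_\mu$ is not symmetric in its arguments), and the target statement~\eqref{mphg} involves $\overline m_\mu(y-z)$, which corresponds to the paper's convention $\overline m_\mu(y)=\lim_t \tilde m_\mu(ty,0)/t$. This is easy to fix but you should be consistent.

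\textbf{A genuine gap: determinism of the limit.} You assert that ``Akcoglu--Krengel together with ergodicity~\eqref{e.erg} produces a deterministic a.s.\ limit,'' but this does not follow directly. The cocycle $t\mapsto \tilde m_\mu(ty,0)$ is one-parameter, over the sub-action $\{\tau_{sy}\}_{s\in\R}$; the subadditive ergodic theorem yields a limit that is a priori only $\tau_y$-invariant. Ergodicity in~\eqref{e.erg} is for the full $\Rd$-action, and a one-parameter sub-action of an ergodic $\Rd$-action need not itself be ergodic, so the limit could in principle be a nontrivial random variable. The paper closes this gap with a short but essential argument: using subadditivity~\eqref{e.subadd}, stationarity, and the oscillation bound from Lemma~\ref{l.oscbnde}, one shows directly that for every $z\in\Rd$,
\[
\overline m_\mu(y,\tau_z\omega) \leq \overline m_\mu(y,\omega),
\]
(roughly by bounding $\tilde m_\mu(ty+z,z)\leq \tilde m_\mu(ty+z,ty)+\tilde m_\mu(ty,0)+\tilde m_\mu(0,z)$ and showing the first and third terms are $o(t)$). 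This yields full $\Rd$-invariance of $\overline m_\mu(y,\cdot)$, and only then does~\eqref{e.erg} force it to be constant. You should supply this step.
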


\begin{proof}
We break the argument into five steps. In the first step, we construct $\mm_\mu$ using the subadditive ergodic theorem and, in Step 2, derive some of its basic properties. In Step 3, we prove~\eqref{mphg} for $z=0$ and in the fourth step we remove this restriction. For the first four steps we fix $\mu \geq \overline H_*$. The universal quantifier over $\mu \geq \overline H_*$ will be smuggled inside the probability in the final step.

Before commencing with the argument, we make a reduction. With $\tilde m_\mu$ defined as in~\eqref{tildemmu}, we observe that
\begin{equation*} \label{}
0 \leq \tilde m_\mu (y,z) - m_\mu(y,z) = \sup_{\xi\in B_1(y)} \left( m_\mu(\xi,z) - m_\mu(y,z) \right) \leq \osc_{B_1(y)} m_\mu(\cdot,z). 
\end{equation*}
Using this together with Lemma~\ref{l.oscbnde}, we find that
\begin{align*}
\lefteqn{\P\left[ \forall \mu \geq \overline H_*, \ \forall R>0, \ \limsup_{t\to \infty} \sup_{y,z\in B_R} \frac1t \left| m_\mu(ty,tz) - \tilde m_\mu(ty,tz) \right| = 0 \right]} \qquad & \\
& \geq \P\left[ \forall \mu \geq \overline H_*, \ \forall R>0, \ \limsup_{t\to \infty} \sup_{y,z\in B_R} \frac1t \osc_{B_1(ty)} m_\mu(\cdot,tz) = 0 \right] \\
& \geq \P\left[ \forall \mu \geq \overline H_*, \ \forall R,\delta >0,  \ \limsup_{t\to \infty} \sup_{z\in\Rd} \sup_{y\in B_R} \frac1t \osc_{B_{t\delta}(ty)} m_\mu(\cdot,z) \leq C\overline K_\mu\delta \right] = 1.
\end{align*}
Therefore, it suffices to prove the proposition with $\tilde m_\mu$ in place of $m_\mu$.

\smallskip

{\it Step 1.} We apply the subadditive ergodic theorem to construct $\overline m_\mu$. Note that it is immediate from the definitions that both $m_\mu$ and $\tilde m_\mu$ are jointly stationarity in $(y,z)$. Precisely, we mean that, using the notation $m_\mu(y,z,\omega)$ and $\tilde m_\mu(y,z,\omega)$ to denote dependence on $\omega\in\Omega$, then with respect to the translation group action $\{ \tau_x\}_{x\in\Rd}$, we have 
\begin{equation*} \label{}
m_\mu(y,z,\tau_x\omega) = m_\mu(y+x,z+x,\omega) \quad \mbox{and} \quad \tilde m_\mu(y,z,\tau_x\omega) = \tilde m_\mu(y+x,z+x,\omega).
\end{equation*}
Note that $\tilde m_\mu$ subadditive by~\eqref{e.subadd} and $\P$--integrable on $\Omega$ since~\eqref{e.osclocR} implies
\begin{multline} \label{e.mtildlip}
\E\left[ \tilde m_\mu(y,z)\right]  \leq \E \left[ \sup_{B_{|y-z|+1}(z)} m_\mu (\cdot,z)\right]  \\ \leq C\left(|y-z|+1\right) \E\left[  \left(  \fint_{B_{|y-z|+1}} K_\mu^\p(x) \, dx \right)^{1/\p} \right] \leq C \overline K_\mu\left(|y-z|+1\right),
\end{multline}
where the last inequality follows by Jensen's inequality. We have checked that $\tilde m_\mu$ verifies the hypothesis of the subadditive ergodic theorem (c.f.~\cite{AK}), and we obtain, for each fixed $y\in \Rd$, a random variable $\mm_\mu(y)$ such that
\begin{equation}\label{apset}
\lim_{t\to \infty} \frac1t \tilde m_\mu(ty,0) = \mm_\mu(y). 
\end{equation}
However, it turns out that $\mm_\mu(y)$ is constant $\P$--almost surely, that is, 
\begin{equation} \label{e.ergstk}
\P\Big[ \mm_\mu(y) = \E\left[ \mm_\mu(y) \right]  \Big] = 1. 
\end{equation}
This follows from the ergodic hypothesis and the fact that $\overline m_\mu(y)$ is invariant under translations. To see this, we write $\tilde m_\mu(y,z,\omega)$ and $\overline m_\mu(y,\omega)$ to denote dependence on $\omega\in\Omega$ and observe that, for every $z\in \Rd$, 
\begin{align*} \label{}
\mm_\mu(y, \tau_z \omega)  & = \lim_{t\to \infty} \frac1t \tilde m_\mu(ty+z,z, \omega) \\ 
& \leq \lim_{t\to \infty} \frac1t \Big(  \tilde m_\mu(ty+z,ty, \omega) + \tilde m_\mu(ty,0, \omega) + \tilde m_\mu(0,z, \omega) \Big)  \\ 
& \leq \lim_{t\to \infty} \frac1t  \tilde m_\mu(ty,0, \omega) + \limsup_{t\to\infty}  \frac1t\Big(  \osc_{B_{|z|+1}(ty)} m_\mu(\cdot,ty, \omega) + \osc_{B_{1}(0)} m_\mu(\cdot,z, \omega) \Big)  \\ 
& = \lim_{t\to \infty} \frac1t  \tilde m_\mu(ty,0, \omega)   = \mm_\mu(y, \omega).
\end{align*}
Here we used stationary, followed by~\eqref{e.subadd}, the definition of $\tilde m_\mu$ and~\eqref{l.oscbnde}. We deduce that $\mm_\mu(y,\tau_z\omega) = \mm_\mu(y,\omega)$ for all $\omega\in \Omega$ and $z\in \Rd$, which, in view of~\eqref{e.erg}, implies that each of the events $\{ \omega\in\Omega \,:\, \mm_\mu(y,\omega) > \E \left[ \mm_\mu(y,\cdot)\right] \}$ and $\{ \omega\in\Omega \,:\, \mm_\mu(y,\omega) < \E \left[ \mm_\mu(y,\cdot)\right] \}$ has probability either zero or one. So both must be of zero probability and~\eqref{e.ergstk} holds. 

We henceforth identify $\overline  m_\mu(y)$ and the deterministic quantity $\E \left[ \overline  m_\mu(y,\cdot) \right]$. With this identification, we may combine~\eqref{apset} and~\eqref{e.ergstk} to write
\begin{equation} \label{e.st1}
\P\left[ \limsup_{t\to \infty} \left| \frac{\tilde m_\mu(ty,0)}{t}-\mm_\mu(y) \right|  =0\right] = 1. 
\end{equation}
This holds for all $y\in \Rd$. By intersecting the events in~\eqref{e.st1} over all $y\in\Qd$, we get
\begin{equation} \label{e.st2}
\P\left[ \forall y\in \Qd, \ \limsup_{t\to \infty} \left| \frac{\tilde m_\mu(ty,0)}{t}-\mm_\mu(y) \right| = 0\right] = 1. 
\end{equation}

{\it Step 2.} We next verify that $\mm_\mu:\Rd\to \R$ is continuous, convex and positively homogeneous. It is immediate from~\eqref{e.mtildlip} that
\begin{equation}\label{mmuppbd}
|\mm_\mu(y)| \leq C\overline K_\mu |y|.
\end{equation}
The stationarity and subadditivity of $\tilde m_\mu$ yield that $\mm_\mu$ is sublinear. Indeed, for every $y,z\in \Rd$,
\begin{multline}\label{mmsubadd}
\mm_\mu(y+z) = \lim_{t\to \infty} \frac1t \E\left[ \tilde m_\mu(t(y+z),0 ) \right] \leq \lim_{t\to \infty} \frac1t  \E\left[ \tilde m_\mu(t(y+z),tz ) +  \tilde m_\mu(tz,0 ) \right] \\ = \lim_{t\to \infty} \frac1t  \E\left[ \tilde m_\mu(ty,0 ) \right] + \lim_{t\to \infty} \frac1t  \E\left[   \tilde m_\mu(tz,0 ) \right] 
= \mm_\mu(y) + \mm_\mu(z). 
\end{multline}
Combining~\eqref{mmuppbd} and~\eqref{mmsubadd} yields 
\begin{equation*}\label{}
\mm_\mu(y) - \mm_\mu(z) \leq \mm_\mu(y-z) \leq C\overline K_\mu |y-z|.
\end{equation*}
and by interchanging $y$ and $z$ we get
\begin{equation}\label{mmlip}
|\mm_\mu(y) - \mm_\mu(z)| \leq C\overline K_\mu |y-z|,
\end{equation}
and so $\overline m_\mu$ is Lipschitz with constant $C\overline K_\mu$. It is immediate from the form of the limit~\eqref{apset} that $\mm_\mu$ is positively homogeneous, and from this and~\eqref{mmsubadd} we deduce that $\mm_\mu$ is convex. For future reference, we observe that, $\mu \mapsto \overline m_\mu(y)$ is concave by~\eqref{e.concv}. Since this map is nondecreasing, it must also be continuous.

{\it Step 3.} We next upgrade the assertion~\eqref{e.st2} to
\begin{equation}\label{e.zeq0}
\P \left[ \forall R> 0, \ \lim_{t\to \infty} \sup_{y \in B_R} \left|  \frac1t \tilde m_\mu(ty,0, \omega) - \mm_\mu(y) \right| = 0 \right] = 1. 
\end{equation}
Observe that, for every $y\in \Rd$ and $z \in \Qd$, we have
\begin{align*}
\lefteqn{\left| \frac1t \tilde m_\mu(ty,0) - \mm_\mu(y)\right|} \qquad & \\
& \leq \frac1t\left|  \tilde m_\mu(ty,0) - \tilde m_\mu(tz,0) \right| + \left|\frac1t \tilde m_\mu(tz,0) - \overline m_\mu(z) \right| + \left|\overline m_\mu(y)-\overline m_\mu(z) \right| \\ 
& \leq \frac1t \osc_{B_{t|y-z|+2}(tz)} m_\mu(\cdot,0) + \frac1t\left| \tilde m_\mu(tz,0) - \overline m_\mu(z) \right| + C\overline K_\mu\left|y-z\right|. 
\end{align*}
Fix $R>0$. Let $\delta > 0$ and select finitely many $z_1,\ldots,z_k\in \Qd\cap B_R$ such that the union of the balls $B(z_i,\delta)$ covers $B_R$. Then from the above inequality, we find that  
\begin{multline*} \label{}
\sup_{y\in B_R} \left| \frac1t \tilde m_\mu(ty,0) - \mm_\mu(y)\right| \\ \leq \sup_{y\in B_R} \sup_{i\in \{ 1,\ldots,k\}} \frac1t \osc_{B_{t\delta+2}(tz_i)} m_\mu(\cdot,0) + \sup_{i\in \{ 1,\ldots,k\}} \frac1t\left| \tilde m_\mu(tz_i,0) - \overline m_\mu(z_i) \right| + C\overline K_\mu\delta.
\end{multline*}
Now taking the limsup as $t\to \infty$, we deduce from~\eqref{e.oscbnde} and~\eqref{e.st2} that, for every $R,\delta > 0$,
\begin{equation*} \label{}
\P \left[  \lim_{t\to \infty} \sup_{y \in B_R} \left|  \frac1t \tilde m_\mu(ty,0, \omega) - \mm_\mu(y) \right| \leq 2C\overline K_\mu \delta \right]=1.
\end{equation*}
We recover~\eqref{e.zeq0} after intersecting over all the events corresponding to $\delta\in \Q_+$ and then over all of the resulting events corresponding to $R\in \N^*$.

{\it Step 4.} We next release the vertex point using Lemma~\ref{eggg} with 
\begin{equation*} \label{}
X_{t} := \sup_{y \in B_{2R}} \left|  \frac1t \tilde m_\mu(ty,0) - \mm_\mu(y) \right|, \quad t>0.
\end{equation*}
Lemma~\ref{l.oscbnde} and~\eqref{e.zeq0} give the hypotheses of Lemma~\ref{eggg} for $X_t$, and so an application of the lemma yields, for every $R>0$,
\begin{multline*} \label{}
 \P \left[ \lim_{t\to \infty}  \sup_{y,z \in B_R} \left|  \frac1t \tilde m_\mu(ty,tz) - \mm_\mu(y-z) \right|=0\right]  \\
 \geq \P \left[ \lim_{t\to \infty} \sup_{z\in B_{R} } \sup_{y \in B_{2R}(z)} \left|  \frac1t \tilde m_\mu(ty+tz,tz) - \mm_\mu(y) \right|=0\right] =1.
\end{multline*}
Intersecting the events corresponding to $R=1,2,\ldots$, we obtain
\begin{equation} \label{e.almdone}
\P \left[ \forall R>0, \ \lim_{t\to \infty}  \sup_{y,z \in B_R} \left|  \frac1t \tilde m_\mu(ty,tz) - \mm_\mu(y-z) \right|=0\right] =1.
\end{equation}

\emph{Step 5.} We immediately obtain~\eqref{mphg}  from~\eqref{e.almdone} by the monotonicity of $\mu \mapsto  m_\mu(y,z)$, the continuity of $\mu \mapsto \overline m_\mu(y)$ (see the end of Step~2) and intersecting the events corresponding to each rational $\mu>\overline H_*$ as well as to $\mu=\overline H_*$.
\end{proof}

\begin{remark}\label{e.huppy}
For future reference we note that, for any $\beta>0$ and $\mu \geq \Lambda_1(\beta^\m + 1)$, we have $m_\mu(y,z) \geq \beta|y-z|$. Indeed, in view of the monotonicity of $\mu \mapsto m_\mu(y,z)$, it is enough to check that the cone function $\phi(y):= \beta \max\{ 0,|y-z|-1\}$ is a subsolution of~\eqref{e.maxsube} for $\mu = \Lambda_1(\beta^\m+1)$. This is easy to obtain from~\eqref{e.Hsubq}, using $|D\phi|\leq \beta$ and the fact that the diffusion term has a helpful sign due to the convexity of $\phi$. This also yields
\begin{equation} \label{}
\mu \geq \Lambda_1(\beta^\m+1) \quad \implies \quad \forall y\in\Rd, \ \ \overline m_\mu(y) \geq \beta|y|.
\end{equation}
In view of the concavity of $\mu\mapsto \overline m_\mu(y)$, which was obtained in Step~2 of the proof above, we get the following: there exists $c>0$ such that, for every $\mu \geq \nu \geq \overline H_*$ and $y,z\in \Rd$,
\begin{equation*} \label{}
 \overline m_\mu (y) \geq   \overline  m_\nu(y) + c\mu^{-(\m-1)/\m}(\mu-\nu)|y|.
\end{equation*}
\end{remark}


\section{Identification of the effective Hamiltonian}
\label{eff}

In this section, we define $\overline H$ in terms of the family $\{ \overline m_\mu \,:\, \mu \geq \overline H_*\}$ of homogenized maximal subsolutions  and proceed to study some of its basic properties. Throughout this section we assume that $\P$ is a given probability measure satisfying~\eqref{e.stat},~\eqref{e.erg} and~\eqref{e.wkcoer}.

We begin with an informal heuristic which leads to a guess for what~$\overline H$ should be, thinking in terms of an ``inverse problem." Write the metric problem at the ``theatrical scaling" by introducing a parameter $\ep > 0$ and defining
\begin{equation*} \label{}
m_\mu^\ep(x):= \ep m_\mu\left(\frac x\ep ,0\right).
\end{equation*}
At this scale, Proposition~\ref{shape} asserts that $m^\ep_\mu\rightarrow \overline m_\mu$ locally uniformly in $\Rd$ and $\P$--almost surely, as $\ep \to 0$, and we may write~\eqref{m-12} as
\begin{equation*} \label{}
-\ep \tr(A\left( \frac x\ep \right)D^2 m^\ep_\mu)+H\left(Dm^\ep_\mu, \frac x\ep \right) = \mu \quad \mbox{in}\ \Rd \setminus \overline B_\ep(0).
\end{equation*}
By formally passing to the limit $\ep \to 0$ in this equation (and in the rescaled version of~\eqref{m-11}) under the assumption that it ``homogenizes," this suggests that we should obtain
\begin{equation} \label{daccord}
\overline H\left(D\overline m_\mu\right) \leq \mu \quad \mbox{in} \ \Rd
 \quad \mbox{and}\quad \overline H\left(D\overline m_\mu\right) = \mu \quad \mbox{in} \ \Rd\setminus \{ 0 \}.
\end{equation}
That is, we expect that $\overline m_\mu$ is the maximal subsolution of $\overline H$ with respect to $\mu$ and the gradient of this positively homogeneous function should prescribe the $\mu$--level set of $\overline H$; the image of its subdifferential should be the $\mu$--sublevel set of~$\overline H$.  

In view of this discussion, we simply \emph{define} $\overline H$ in such a way that this is so:
\begin{equation} \label{Hbardef}
\overline H(p):= \inf\left\{ \mu \geq \overline H_*\,:\, \forall y\in \Rd, \ \ \overline m_\mu(y) \geq p\cdot y\right\}.
\end{equation}
Note that since $\overline m_\mu$ is convex positively homogeneous, the subdifferential $\partial m_\mu(0)$ is actually the closed convex hull of the image of $\Rd$ under $D\overline m_\mu$. Recall that the subdifferential $\partial \phi(x)$ of a convex function $\phi:\Rd\to\R$ at a point $x$ is defined by
\begin{equation*} \label{}
\partial \phi(x):= \left\{ p\in \Rd \,:\, \forall y \in \Rd,\ \phi(y) \geq \phi(x) - p \cdot (y-x) \right\}. 
\end{equation*}
We expect $\partial \overline m_\mu(0)$ to be the $\mu$--sublevel set of $\overline H$ and the image of $\Rd$ under $D\overline m_\mu$ to be the $\mu$--level set of $\overline H$. This indeed follows from~\eqref{Hbardef} and we may invert this formula to write $\overline m_\mu$ in terms of~$\overline H$:
\begin{equation} \label{mm}
\overline m_\mu(y)  = \sup\left\{ p\cdot y\,:\, \overline H(p) \leq \mu \right\}.
\end{equation}
That is, $\overline m_\mu$ is simply the support function of the $\mu$--sublevel set of $\overline H$. So the definition~\eqref{Hbardef} is formally in accord with~\eqref{daccord}, and once we have verified that $\overline H$ is convex (which we do below in Lemma~\ref{Hbarpropts}), checking the latter in the viscosity sense is simply a routine exercise. Since here we do not actually use this fact, we omit the argument, but the reader may consult for example~\cite{AS2} or else argue directly that the maximal subsolutions of a constant-coefficient convex Hamiltonian are the support functions of the sublevel sets.

We need to check that the quantity~$\overline H(p)$ is well-defined (and finite). In view of the monotonicity of~$\mu\mapsto \overline m_\mu$, we need only show that, for every~$p\in \Rd$, there exists~$\mu> \overline H_*$ sufficiently large that the graph of~$\overline m_\mu$ is above the plane~$y\mapsto p\cdot y$. But this is immediate from~\eqref{e.huppy}, which in fact gives the estimate
\begin{equation} \label{Hbarbnds}
\overline H_*\leq \overline H(p) \leq \Lambda_1\left( |p|^\m + 1\right).
\end{equation}
We collect some more basic properties of the effective Hamiltonian $\overline H:\Rd \to \R$ in the following lemma.

\begin{lem}\label{Hbarpropts}
The function $\overline H:\Rd \to \R$ is continuous, convex and there exist $C,c>0$, depending only on $d$, such that
\begin{equation} \label{Hgrth}
\barH_*=\min_{p \in \Rd} \barH(p) \qquad \mbox{and} \qquad c\overline K_0^{-\m} \left( |p| -  C\overline K_0\right)^\m \leq \overline H(p) \leq \Lambda_1\left( |p|^\m + 1\right).
\end{equation}
\end{lem}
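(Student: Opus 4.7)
The upper bound in \eqref{Hgrth} has already been established in \eqref{Hbarbnds}, so the real work is to obtain convexity, the identity $\overline H_* = \min_p \overline H(p)$, and the lower bound. My plan is to read everything off from properties of the family $\{\overline m_\mu\}$ constructed in Proposition~\ref{shape} together with the definition~\eqref{Hbardef}.

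First I would establish convexity. The concavity of $\mu\mapsto m_\mu(y,z)$ in \eqref{e.concv} passes to the limit $t\to\infty$, so $\mu\mapsto \overline m_\mu(y)$ is concave on $[\overline H_*,\infty)$ for each fixed $y$. Then, if $\overline H(p_i)<\mu_i$ for $i=1,2$ and $\lambda\in[0,1]$, we have $\overline m_{\mu_i}(y)\geq p_i\cdot y$ for all $y$, and concavity gives
\[
\overline m_{\lambda\mu_1+(1-\lambda)\mu_2}(y) \geq \lambda\overline m_{\mu_1}(y)+(1-\lambda)\overline m_{\mu_2}(y) \geq (\lambda p_1+(1-\lambda)p_2)\cdot y.
\]
Taking infima then shows $\overline H(\lambda p_1+(1-\lambda)p_2)\leq \lambda\mu_1+(1-\lambda)\mu_2$, i.e., $\overline H$ is convex. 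Since $\overline H$ is finite on $\Rd$ by~\eqref{Hbarbnds}, continuity is a standard consequence of convexity.

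For the minimum statement, the lower bound $\overline H(p)\geq \overline H_*$ is built into the definition~\eqref{Hbardef}. To see that $\overline H_*$ is attained, recall that $\overline m_{\overline H_*}$ is convex with $\overline m_{\overline H_*}(0)=0$ (by positive homogeneity), so its subdifferential at the origin is nonempty, and for any $p_0\in\partial \overline m_{\overline H_*}(0)$ we have $\overline m_{\overline H_*}(y)\geq p_0\cdot y$ for all $y$; this shows $\overline H(p_0)\leq \overline H_*$ and hence equality.

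Finally, for the lower bound, I would exploit the Lipschitz estimate \eqref{mmlip}: since $\overline m_\mu(0)=0$, applying \eqref{mmlip} with $z=0$ and $y=p/|p|$ (or simply $y=p$) yields $\overline m_\mu(p)\leq C\overline K_\mu |p|$. If $\mu=\overline H(p)$ (or slightly larger), then $\overline m_\mu(y)\geq p\cdot y$ for every $y$; in particular taking $y=p$ gives $|p|^2\leq C\overline K_\mu |p|$, so $|p|\leq C\overline K_\mu$. Combining this with the moment growth bound~\eqref{e.Kmugrmu} gives $|p|\leq C\overline K_0(1+\mu^{1/m})$, which inverts to
\[
\mu \geq c\,\overline K_0^{-m}\bigl(|p|-C\overline K_0\bigr)^m,
\]
as desired. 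The main (minor) obstacle is keeping the constants clean in this last inversion and noting that for $|p|\leq C\overline K_0$ the claimed lower bound is either trivial or is absorbed by the bound $\overline H(p)\geq \overline H_*$; for $|p|\geq 2C\overline K_0$ the displayed inequality gives the genuine $m$th-power growth.
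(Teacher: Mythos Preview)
Your proposal is correct and follows essentially the same route as the paper. For the minimum, the paper picks $p_0\in\partial\overline m_{\overline H_*+\delta}(0)$ and sends $\delta\to 0$, while you work directly at $\mu=\overline H_*$; both are fine since the family $\{\overline m_\mu\}$ is defined for $\mu\geq\overline H_*$. For the lower bound, the paper simply points to \eqref{e.Kmugrmu} and \eqref{mmuppbd} and calls it an ``easy computation''; your argument (taking $y=p$ in $\overline m_\mu(y)\geq p\cdot y$ to get $|p|\leq C\overline K_\mu$ and then inverting via \eqref{e.Kmugrmu}) is exactly the intended one. One point worth noting: the paper's own proof does not actually supply the convexity argument, so your derivation from the concavity of $\mu\mapsto\overline m_\mu(y)$ (established at the end of Step~2 of Proposition~\ref{shape}) is a genuine addition rather than a repetition.
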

\begin{proof}
By definition, $\overline H(\cdot) \geq\overline H_*$. On the other hand, take $\delta > 0$, set $\mu:=\overline H_*+\delta$. Since $\overline m_\mu$ is convex, we may select $p_0\in \partial m_\mu(0)$. This implies that $\overline m_\mu(y) \geq p_0\cdot y$ for every $y\in \Rd$. Thus $$\min_{p\in\Rd} \overline H(p) \leq \overline H(p_0) \leq \mu = \overline H_*+ \delta.$$ Since $\delta >0$ was arbitrary, we obtain the first assertion of~\eqref{Hgrth}. 

The upper bound for $\overline H$ was proved already in~\eqref{Hbarbnds}. The lower bound follows from~\eqref{e.Kmugrmu} and~\eqref{mmuppbd} and the definition of~$\overline H$ after an easy computation. 
\end{proof}

An immediate consequence of~the convexity of $\overline H$ is that, with the possible exception of the minimal level set $\{ \overline H = \overline H_*\}$, each of the level sets of $\overline H$ are the boundary of the corresponding sublevel set. That is, for every $p\in\Rd$,
\begin{equation} \label{e.pbndry}
\barH(p) > \barH_* \qquad \mbox{implies that} \qquad p \in \partial \left\{ \hat p\in \Rd \, : \, \barH(\hat p) \leq \barH(p) \right\}.
\end{equation}
To prove the main homogenization result, we need further geometric information, summarized in the following lemma, relating the level sets of $\overline H$ and the maximal subsolutions.

\smallskip

Recall that if~$K\subseteq \Rd$ is closed and convex, an \emph{exposed point} of~$K$ is a point $p\in K$ such that there exists a linear functional~$l:\Rd \to \R$ such that~$l(p)>l(\hat p)$ for every~$\hat p \in K\setminus \{p \}$. The set of exposed points are, for a general bounded convex subset~$K$ of~$\Rd$, a proper subset of the set of extreme points of~$K$. However, Straszewicz's theorem (c.f.~\cite[Theorem 18.6]{Rock}) asserts that every extreme points is a limit of exposed points. 

\begin{lem} \label{geom}
Let $\mu \geq \barH_*$ and $p \in \partial \left\{ \hat p\in \Rd\,:\, \barH(\hat p) \leq \mu \right\}$. Then there exists a unit vector $e\in \partial B_1$ such that 
\begin{equation}\label{touch}
\mm_\mu (e) -p\cdot e = 0 = \inf_{y\in \Rd} \left( \mm_\mu (y) -p\cdot y \right).
\end{equation}
If in addition $p$ is an exposed point of $\left\{ \hat p\in \Rd\,:\, \barH(\hat p) \leq \mu \right\}$, then $e$ can be chosen in such a way that $\mm_\mu$ is differentiable at $e$ with $p=D\mm_\mu(e)$.
\end{lem}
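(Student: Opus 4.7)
The plan is to recognize $\mm_\mu$ as the support function of the compact convex set $K_\mu := \{\hat p \in \Rd : \barH(\hat p) \leq \mu\}$ and then invoke standard convex analysis. The identity~\eqref{mm} gives exactly this: $\mm_\mu(y) = \sup_{q \in K_\mu} q \cdot y$, and compactness of $K_\mu$ follows from the coercive lower bound in~\eqref{Hgrth}.

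First, I will establish $\inf_y (\mm_\mu(y) - p\cdot y) = 0$. Because $p \in \partial K_\mu \subseteq K_\mu$, the definition of the support function immediately yields $\mm_\mu(y) \geq p \cdot y$ for every $y \in \Rd$, so the infimum is nonnegative. Evaluating at $y=0$ and using the positive homogeneity $\mm_\mu(0)=0$ (established in Step~2 of the proof of Proposition~\ref{shape}) shows the infimum equals $0$.

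Next, to produce the unit vector $e$ at which the infimum is attained: since $p$ lies on the boundary of the closed convex set $K_\mu$, the supporting hyperplane theorem gives a nonzero vector $e$ (which we normalize to $|e|=1$) such that $q \cdot e \leq p \cdot e$ for every $q \in K_\mu$. Combining this with $p \in K_\mu$ yields $\mm_\mu(e) = \sup_{q\in K_\mu} q \cdot e = p \cdot e$, which is~\eqref{touch}.

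For the exposed-point refinement, if $p$ is exposed then there exists a nonzero vector $e$ (again normalized to unit length) such that $p$ is the \emph{unique} maximizer of $q \mapsto q \cdot e$ over $K_\mu$. The subdifferential of a support function of a compact convex set at $e$ coincides with the face of maximizers (Rockafellar~\cite[Corollary~23.5.3 / Theorem~25.1]{Rock}):
\begin{equation*}
\partial \mm_\mu(e) = \left\{ q \in K_\mu \,:\, q \cdot e = \mm_\mu(e) \right\} = \{p\}.
\end{equation*}
A convex function whose subdifferential at a point is a singleton is differentiable there with gradient equal to that singleton, so $\mm_\mu$ is differentiable at $e$ with $D\mm_\mu(e) = p$. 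The main ``obstacle'' is almost none: the whole lemma is a direct translation of boundary/exposed-point structure of $K_\mu$ through the support-function duality, and the only content that must be verified beforehand is the compactness of $K_\mu$ (from~\eqref{Hgrth}) and the representation~\eqref{mm}, both already in hand.
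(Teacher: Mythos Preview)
Your proof is correct and follows essentially the same route as the paper: both arguments use the support-function representation~\eqref{mm}, apply a supporting hyperplane at the boundary point $p$ to produce $e$, and in the exposed case observe that the face $\{\hat p\in K_\mu:\hat p\cdot e=\mm_\mu(e)\}$ reduces to $\{p\}$, yielding differentiability. The only cosmetic difference is that the paper writes out the subdifferential identity $\partial\mm_\mu(e)=\{\hat p\in S:l(\hat p)=0\}$ by hand, whereas you cite Rockafellar for it.
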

\begin{proof}
Set $S:= K_\mu=\left\{ \hat p \in \Rd \,: \, \barH(\hat p) \leq \mu \right\}$. By elementary convex separation, there exists a linear functional $l:\Rd \to \R$ such that $l(p) = 0$ and $l(\hat p) \leq 0$ for every $\hat p\in  S $. If $p$ is an exposed point, then we also take $l$ so that $l(\hat p) < 0$ for every $\hat p\in S \setminus \{ p \}$. According to the representation theorem, there exists $e\in \Rd$ such that $l(x) = e\cdot(x-p)$. By normalizing, we may assume that $|e|=1$. We deduce that, for every $y\in \Rd$,
\begin{equation}\label{dumbass}
\mm_\mu(e) - p\cdot e = \sup\left\{ (\hat p-p)\cdot e \, : \, \hat p\in S \right\} = 0 \leq \sup\left\{ (\hat p-p)\cdot y \, : \, \hat p\in S \right\} = \mm_\mu(y) - p\cdot y.
\end{equation}
This is \eqref{touch}. Since $\mm_\mu$ is positively homogeneous, we see that $p\in \partial \mm_\mu(e)$. In fact, what~\eqref{dumbass} shows is precisely that
\begin{equation}\label{}
\partial  \mm_\mu(e) = \left\{ \hat p\in S\,:\, l(\hat p) = 0\right\}.
\end{equation}
Thus if $p$ is an exposed point of $S$, then we have $\partial \mm_\mu(e)= \{ p \}$ by our choice of~$l$. This implies that $\mm_\mu$ is differentiable at $e$ and $D\mm_\mu(e)=p$. 
\end{proof}

\begin{remark}
We can express~$\overline H$ via the following ``min--max" formula:
\begin{multline} \label{e.minmax}
\overline H(p) = \inf \bigg\{ \mu\in \R \,:\,  \mbox{there exists} \ \  w\in C^{0,1}_{\mathrm{loc}} (\Rd) \ \ \mbox{satisfying} \ \ \eqref{e.maxsube} \\ \mbox{and} \ \ \liminf_{|y| \to\infty } \frac{w(y)-p\cdot y}{|y|} \geq 0  \bigg\}.
\end{multline}
Indeed, if $w \in \USC(\Rd)$ satisfies~\eqref{e.maxsube}, then 
\begin{equation*} \label{}
\overline m_\mu(y) -p\cdot y \geq \liminf_{t \to \infty} \frac{w(ty)-p\cdot (ty)}{|ty|}.
\end{equation*}
If the latter is nonnegative for all $y\in \Rd$, then $\overline H(p) \leq \mu$ by definition. This yields~ ``$\leq$" in~\eqref{e.minmax}. To obtain the reverse inequality, we use $m_\mu$ with $\mu=\overline H(p)$ as the witness and observe that 
\begin{align*}
\liminf_{|y| \to \infty} \frac{m_\mu(y) - p\cdot y}{|y|} = \liminf_{t\to\infty} \inf_{|y| = 1} \left( \frac{m_\mu(ty)}{t} - p\cdot y \right) = \inf_{|y| = 1} \left( \overline m_\mu(y) - p\cdot y\right) \geq 0. 
\end{align*}
The reason that we call~\eqref{e.minmax} a ``min--max" representation is that it can be formally written
\begin{equation} \label{e.minmax2}
\overline H(p) = \inf_{w \in \mathcal L_p} \sup_{y\in \Rd} \left(-\tr \left( A(y) D^2w(y) \right) + H( Dw(y),y) \right),
\end{equation}
where
\begin{equation*}
\mathcal L_p:=\left\{ w\in C^{0,1}_{\mathrm{loc}} (\Rd) \  : \ \liminf_{|y| \to\infty } \frac{w(y)-p\cdot y}{|y|} \geq 0 \right\}.
\end{equation*}
The expression inside the infimum on the right of~\eqref{e.minmax2} does not make sense, due to the fact that~$w$ may not have enough regularity. It must therefore be interpreted in the viscosity sense, and this leads precisely to~\eqref{e.minmax}.
\end{remark}

\section{Homogenization of the approximate cell problem}\label{MAC}

In this section, we show using a comparison argument that Proposition~\ref{shape} implies an homogenization result for a special time--independent problem. The particularities of this argument are new here, even for uniformly coercive Hamiltonians or first-order equations. 

\smallskip

Throughout we assume~$\P$ is a probability measure on~$(\Omega,\F)$ satisfying~\eqref{e.stat},~\eqref{e.erg} and~\eqref{e.wkcoer}.

\smallskip

For each fixed $p\in\Rd$, we consider the problem
\begin{equation}\label{macro}
w^\ep -\ep \tr\left (A\left (\frac{x}{\ep}\right)D^2 w^\ep\right) +H\left(p+Dw^\ep,\frac x\ep\right)=0 \quad \mbox{in}\ \R^d.
\end{equation}
As we will see below,~\eqref{macro} has a unique bounded-below solution with probability one which we denote by $w^\ep(\cdot,p)$. We argue that
\begin{equation}\label{macro-1}
\P \left[ \forall p\in \Rd, \ \forall R>0,  \  \  \limsup_{\ep \to 0} \sup_{x\in B_R} \left| w^\ep(x,p) + \barH(p) \right| = 0 \right] = 1. 
\end{equation}
Recall that~\eqref{macro}, often written at a different scale than~\eqref{macro} (see~\eqref{vepeq} below), is often called the \emph{approximate cell problem} and homogenizing it (by which we mean proving~\eqref{macro-1}) is the key step in the derivation of Theorem~\ref{Hg} from Proposition~\ref{shape}. To see why we expect $w^\ep(\cdot,p)$ to converge locally uniformly to the constant $-\overline H(p)$ as $\ep \to 0$, observe that the (unique) solution of
\begin{equation} \label{macro-homed}
w + \overline H(p+Dw) = 0\quad \mbox{in} \ \Rd
\end{equation}
is precisely the constant function $w\equiv -\overline H(p)$. Thus~\eqref{macro-1} can be understood roughly as the assertion that ``\eqref{macro} homogenizes to~\eqref{macro-homed}."

\subsection{Basic properties of~\eqref{macro}}
In order to prove~\eqref{macro-1}, we must first establish some basic properties of~\eqref{macro} including addressing the question of well-posedness. In the uniformly coercive case, it is straightforward to show (and classical) that the Perron method and the comparison principle yield a unique bounded solution of~\eqref{macro} given by the formula
\begin{equation} \label{e.wepdef}
w^\ep(x,p) := \sup \left\{ v(x)\,: \, v \in \USC(\Rd) \ \mbox{is a subsolution of~\eqref{macro}} \right\}.
\end{equation}
Well-posedness in the general weakly coercive setting is more nontrivial because it is less easy to show \emph{a priori} that $w^\ep(\cdot,p)$ satisfies a suitable growth condition at infinity for the application of the comparison principle. 

We take~\eqref{e.wepdef} to be the \emph{definition} of the function~$w^\ep(x,p)$ and continue with a discussion of some elementary properties of $w^\ep$. First, we remark that it is often convenient to consider~\eqref{macro} at the microscopic scale, in order to use the stationary of the environment. The rescaled equation is 
\begin{equation} \label{vepeq}
\ep v - \tr\left (A(y) D^2 v\right) +H\left(p+Dv,y\right)=0 \quad \mbox{in}\ \R^d.
\end{equation}
and we rescale $w^\ep$ by introducing 
\begin{equation} \label{weprsc}
v^\ep(y,p):= \frac1\ep w^\ep(\ep y,p) = \sup \left\{ v(x)\,: \, v \in \USC(\Rd) \ \mbox{is a subsolution of~\eqref{vepeq}} \right\}.
\end{equation}
The second equality in~\eqref{weprsc} follows from the definition of~$w^\ep$ and a rescaling of~\eqref{macro}. Note that it is immediate from~\eqref{weprsc} that $v^\ep(x,p)$ is stationary with respect to the translation action. According to~\cite[Theorem 6.1]{AT}, for every $\ep > 0$, $p\in \Rd$ and choice of coefficients $(\sigma,H) \in \Omega$, the function $v^\ep(\cdot,p)$ defined in~\eqref{weprsc} belongs to $C^{0,1}_{\mathrm{loc}}(\Rd)$ and is a solution of~\eqref{vepeq}. It follows immediately from reversing the scaling that $w^\ep(\cdot,p)\in C^{0,1}_{\mathrm{loc}}(\Rd)$ is a solution of~\eqref{macro}. Uniqueness is a separate issue addressed below, see~\eqref{macro-uniq}.

\smallskip

Next, we observe that~$w^\ep(\cdot,p)$ is bounded below uniformly in $\ep$. Indeed, for all $p\in\Rd$,
\begin{equation} \label{wepbb}
\inf_{x\in \Rd} w^\ep(x,p) \geq -\Lambda_1\!\left( |p|^\m +1 \right). 
\end{equation}
This follows from the definition of $w^\ep$ and the fact that the right side of this inequality is a subsolution of~\eqref{macro}, according to~\eqref{e.Hsubq}, as we have already seen in Remark~\ref{e.huppy}. Using this bound for the equation at the microscopic scale, we obtain that $v^\ep(\cdot,p)$ is a solution of the inequality
\begin{equation*} \label{}
 - \tr\left (A(y) D^2 v^\ep\right) +H\left(p+Dv^\ep,y\right) \leq \Lambda_1\!\left( |p|^\m +1 \right) \quad \mbox{in}\ \R^d.
\end{equation*}
Then according to the definition of $m_\mu$ with $\mu =  \Lambda_1\left( |p|^\m +1 \right)$, we obtain the estimate 
\begin{equation} \label{e.entrape}
 v^\ep( y, p) - \sup_{x\in B_1(z)} v^\ep( x, p)\leq m_\mu(y,z) \qquad \mbox{for every} \ \ \mu \geq \Lambda_1\!\left( |p|^\m +1 \right).
\end{equation}
Note that this inequality holds uniformly in $\ep$. 

\begin{lem} \label{wepconca}
For every $\ep > 0$, $x\in \Rd$ and $(\sigma,H) \in \Omega$,
\begin{equation} \label{wepconc}
p\mapsto w^\ep(x,p) \quad \mbox{is concave.}
\end{equation}
\end{lem}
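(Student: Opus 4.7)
The plan is to exploit the convexity of $H$ in its first argument, together with the Perron representation \eqref{e.wepdef} of $w^\ep(\cdot,p)$ as the maximal viscosity subsolution of the $p$--shifted equation \eqref{macro}. Fix $p_0, p_1 \in \Rd$ and $\lambda\in[0,1]$; set $p_\lambda := (1-\lambda)p_0+\lambda p_1$ and $w_i := w^\ep(\cdot,p_i)$ for $i=0,1$. Denoting $w_\lambda := (1-\lambda)w_0+\lambda w_1$, the concavity claim becomes $w_\lambda \leq w^\ep(\cdot,p_\lambda)$, which by \eqref{e.wepdef} will follow once we verify that $w_\lambda \in \USC(\Rd)$ (which is clear since the $w_i$ are continuous by \cite[Theorem 6.1]{AT}) is itself a viscosity subsolution of \eqref{macro} with shift $p_\lambda$.

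The heart of the matter is the pointwise computation
\[
p_\lambda + Dw_\lambda \;=\; (1-\lambda)(p_0+Dw_0)\,+\,\lambda\,(p_1+Dw_1),
\]
combined with the convexity \eqref{e.Hconvex} of $H(\cdot,y)$ and the linearity of $M\mapsto \tr(A(y)M)$: at any point where $w_0$ and $w_1$ are twice differentiable, one computes
\[
w_\lambda - \ep\tr\!\left(A(\tfrac{x}\ep)D^2 w_\lambda\right) + H\!\left(p_\lambda + Dw_\lambda,\tfrac{x}\ep\right) \;\leq\; (1-\lambda)\,E_0(x) \,+\, \lambda \,E_1(x) \;\leq\; 0,
\]
where $E_i(x) := w_i - \ep\tr(A(x/\ep)D^2w_i) + H(p_i+Dw_i,x/\ep)$ is the left-hand side of the equation satisfied by $w_i$ with parameter $p_i$. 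Thus, formally, $w_\lambda$ is a subsolution of the $p_\lambda$--equation.

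To upgrade this formal calculation to the viscosity sense, I would regularize each $w_i$ by a standard sup-convolution $w_i^\delta$. Since $w_i\in C^{0,1}_{\mathrm{loc}}(\Rd)$, the sup-convolutions are semi-convex and Lipschitz; by well-known arguments (c.f.~\cite{CIL}), each $w_i^\delta$ remains a viscosity subsolution of a slightly perturbed $p_i$--equation in which the coefficients are evaluated at nearby points. Alexandrov's theorem ensures $w_0^\delta$ and $w_1^\delta$ are twice differentiable almost everywhere, so the convex-combination computation above applies a.e., and, via the standard semi-convex--Jensen machinery, yields that $(1-\lambda)w_0^\delta + \lambda w_1^\delta$ is a viscosity subsolution of the corresponding perturbed $p_\lambda$--equation. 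Sending $\delta\to 0$ and invoking the stability of viscosity subsolutions delivers the subsolution property of $w_\lambda$ itself, and thus \eqref{wepconc}. The main technical hurdle is the viscosity-sense upgrade in this last step, but the reasoning is completely standard once the Lipschitz bound on $w_i$ is in hand; in particular, none of this argument touches the weak coercivity hypothesis \eqref{e.wkcoer}.
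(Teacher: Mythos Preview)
Your proposal is correct and follows essentially the same route as the paper: both reduce concavity of $p\mapsto w^\ep(x,p)$ to the fact that a convex combination of subsolutions (with parameters $p_0$, $p_1$) is a subsolution of~\eqref{macro} with parameter $p_\lambda$, which is a formal consequence of the convexity of $H$ and the linearity of the diffusion term. The paper defers the viscosity-sense justification to \cite[Lemma~2.4]{AT}, while you sketch the standard sup-convolution/Alexandrov argument to the same effect.
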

\begin{proof}
Observe that if~$v_1, v_2 \in \USC(\Rd)$ are subsolutions of~\eqref{macro} with~$p=p_1$ and~$p=p_2$, respectively, and $\lambda\in [0,1]$, then the function~$\lambda v_1 + (1-\lambda) v_2$ is a subsolution of~\eqref{macro} with~$p= \lambda p_1 + (1-\lambda) p_2$. This follows formally from the convexity of the Hamiltonian, and for a rigorous proof we refer to the argument of~\cite[Lemma 2.4]{AT}. In view of the definition of~$w^\ep$ in~\eqref{e.wepdef}, this observation gives the lemma.
\end{proof}

An immediate consequence of~\eqref{wepbb} and Lemma~\ref{wepconca} is that, for every $k>0$, the map $p\mapsto \max\{ k, w^\ep(x,p) \}$ is uniformly continuous. Indeed, we obtain that, for all $p,\hat p\in\Rd$ with $|p-\hat p| < 1$,
\begin{equation} \label{wepusc}
w^\ep(x,p) \geq \big( 1 - |p-\hat p| \big) w^\ep(x,\hat p) - \Lambda_1\big( |p|^\m +1 \big) |p-\hat p|.
\end{equation}

We next show that $w^\ep(x,p)$ satisfies, almost surely with respect to $\P$, an appropriate sublinear growth condition uniformly in $\ep$ and for bounded $|p|$. This is required both in order to establish $w^\ep$ as the \emph{unique} bounded-below solution of~\eqref{macro} and is also needed in the proof of~\eqref{macro-1}. Note that this estimate is trivial for uniformly coercive Hamiltonians, since in this case $w^\ep(x,p)$ is bounded above uniformly for $x\in \Rd$, $p\in B_R$ and $0 < \ep \leq 1$. In the general case, it is a consequence of the averaged coercivity condition~\eqref{e.wkcoer} and its proof uses the ergodic theorem, which is the reason we expect it to hold only almost surely with respect to $\P$.

\begin{lem} \label{GC}
We have
\begin{equation} \label{e.wepgc}
\P\left[ \forall R> 0, \ \limsup_{|x|\to\infty} \sup_{|p| \leq R} \sup_{0<\ep\leq 1} \frac{|w^\ep(x,p)|}{|x|} = 0 \right] = 1. 
\end{equation}
\end{lem}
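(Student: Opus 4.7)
The lower estimate in~\eqref{e.wepgc} is deterministic and immediate from~\eqref{wepbb}: for $|p|\leq R$ we have $w^\ep(x,p) \geq -\Lambda_1(R^m+1)$, so $w^\ep(x,p)/|x| \to 0$ from below as $|x|\to\infty$, uniformly in $\ep$ and $p$. So the content is the upper estimate, and my plan is to realize $v^\ep(y,p) := w^\ep(\ep y,p)/\ep$ as a subsolution of a metric problem for the \emph{shifted} Hamiltonian $H_p(q,y) := H(p+q,y)$ and then apply the machinery of Section~\ref{barH} to $H_p$.

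Since $v^\ep$ solves~\eqref{vepeq} and the lower bound~\eqref{wepbb} gives $-\ep v^\ep \leq \mu_0 := \Lambda_1(|p|^m+1)$, the function $v^\ep(\cdot,p)$ satisfies
\begin{equation*}
-\tr(A(y)D^2 v^\ep) + H_p(Dv^\ep,y) \leq \mu_0 \quad \text{in } \Rd.
\end{equation*}
A direct verification shows that $H_p$ satisfies~\eqref{e.Hconvex}--\eqref{e.HsubqDp} and the weak coercivity~\eqref{e.wkcoer} with $|p|$-dependent constants, so the entire content of Section~\ref{barH} applies to $H_p$: the maximal subsolution $m_\mu^{(p)}$ is well-defined, its oscillation bound~\eqref{e.oscloc} holds with an $L^\alpha$-integrable random constant $K_\mu^{(p)}$, and Proposition~\ref{shape} produces a homogenized limit which, by~\eqref{mm}, equals $\mm_\mu^{(p)}(y) = \mm_\mu(y) - p\cdot y$. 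Applying the metric subsolution bound~\eqref{e.entrape} to $v^\ep$ and rescaling to the macro variable $x=\ep y$ yields
\begin{equation*}
w^\ep(x,p) \leq \sup_{B_\ep(0)} w^\ep(\cdot,p) + \ep\, m_{\mu_0}^{(p)}(x/\ep,0), \qquad x \in \Rd.
\end{equation*}

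It then remains to show that, almost surely, each of the two terms on the right is $o(|x|)$ as $|x|\to\infty$, uniformly in $\ep \leq 1$ and $|p|\leq R$. For the first term, the local Lipschitz estimate for $v^\ep$ from~\cite[Theorem~6.1]{AT} combined with the $\P$-a.s.\ finiteness of $K_{\mu_0}^{(p)}(0)$ yields $\sup_{B_\ep(0)} w^\ep(\cdot,p) \leq w^\ep(0,p) + K_{\mu_0}^{(p)}(0)$, which is bounded uniformly in $\ep \leq 1$ and hence contributes $O(1)/|x|\to 0$. For the metric term I would mimic the proof of Lemma~\ref{l.oscbnde}: mollify $m_{\mu_0}^{(p)}(\cdot,0)$, bound its gradient pointwise by $CK_{\mu_0}^{(p)}$, and apply Morrey's inequality with exponent $\alpha>d$ together with the ergodic theorem for $K_{\mu_0}^{(p)}$. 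Uniformity in $|p|\leq R$ follows from the concavity-Lipschitz estimate~\eqref{wepusc} by reduction to a countable dense subset; uniformity in $\ep \leq 1$ is automatic since both the subsolution inequality and the maximal subsolution $m_{\mu_0}^{(p)}$ are $\ep$-independent.

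The main obstacle I foresee is that the Morrey plus ergodic theorem argument, as it stands in Lemma~\ref{l.oscbnde}, only produces an $O(|x|)$ bound with coefficient $C\,\overline{K_{\mu_0}^{(p)}}$, rather than the required $o(|x|)$. Closing this gap requires using $\alpha > d$ in its \emph{pointwise} rather than averaged form. A natural route is a Borel--Cantelli argument on dyadic shells: Markov's inequality and a union bound over unit cubes in $B_R$ give $\P\bigl(\sup_{z\in B_R} K_{\mu_0}^{(p)}(z) > R^{d/\alpha + \delta}\bigr) \leq CR^{-\alpha\delta}$ for small $\delta > 0$, which is summable over $R=2^k$, so $\sup_{z\in B_R} K_{\mu_0}^{(p)}(z) = o(R)$ almost surely. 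Feeding this refined pointwise bound back into the oscillation estimate along a chain of overlapping unit balls from $0$ to $x/\ep$ should upgrade the linear estimate to a sublinear one. The same union-bound technique should handle uniform (in $\ep\leq 1$) control of $w^\ep(0,p)$ itself.
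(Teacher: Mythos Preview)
Your approach has a genuine gap that the proposed Borel--Cantelli fix does not close. The core issue is that $m_{\mu_0}^{(p)}(\cdot,0)$ grows \emph{linearly}, not sublinearly: by Proposition~\ref{shape} (applied to $H_p$) it homogenizes to $\overline m_{\mu_0}^{(p)}$, which as you correctly compute equals $\overline m_{\mu_0}(y)-p\cdot y$. Since $\overline H(p)\leq \mu_0=\Lambda_1(|p|^m+1)$ with strict inequality generically, the point $p$ lies in the \emph{interior} of the $\mu_0$-sublevel set of $\overline H$, so by~\eqref{mm} the function $\overline m_{\mu_0}^{(p)}$ is strictly positive on $\partial B_1$. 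Hence $\ep\, m_{\mu_0}^{(p)}(x/\ep,0)/|x|=m_{\mu_0}^{(p)}(y,0)/|y|$ (with $y=x/\ep$) tends to a \emph{positive} limit as $|y|\to\infty$, not to zero. Your Borel--Cantelli argument bounds $\sup_{B_R}K_{\mu_0}^{(p)}$ by $R^{d/\alpha+\delta}$, but chaining local oscillations along $O(|y|)$ unit balls still yields $O(|y|)$: the spatial average of $K$ along such a chain converges (by the ergodic theorem) to $\E[K]>0$, not to zero. No refinement of the oscillation estimate can make a genuinely linear quantity sublinear.

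The paper's proof supplies the missing ingredient: a \emph{pointwise} upper barrier. An explicit test-function computation (see~\eqref{e.barrests}, from~\cite[Section~4]{AT}) gives $\ep\sup_{|p|\leq R}\sup_{B_1}v^\ep(\cdot,p)\leq \xi$ for a stationary random variable $\xi$ built from the local coefficients. This bounds $w^\ep(0,p)$ uniformly in $\ep$ --- precisely the point you flagged as unresolved. The variable $\xi$ need not have finite moments, but it has finite essential infimum $I$, and the ergodic theorem guarantees that points with $\xi(\cdot)\leq 2I$ have positive density in every large ball. The metric/oscillation bound (which you do have, via~\eqref{e.entrape} and~\eqref{e.oscbnde}) is then used not to control $w^\ep$ all the way from the origin, but only to propagate the barrier bound from these nearby ``good'' points to arbitrary points, via Lemma~\ref{eggg}. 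In other words, the oscillation estimate is applied over distances $o(t)$, where it yields $o(t)$ control; your scheme attempts to use it over distance $|x/\ep|$, where it cannot.
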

\begin{proof}
In view of~\eqref{wepbb}, we need only prove upper bounds for $w^\ep$.  For most of the argument we work at the microscopic scale and we split the proof into four steps. It clearly suffices to prove the lemma for fixed $R>0$, since we obtain the general case by intersecting the events corresponding to rational~$R$. 

It is convenient to work with the random fields
\begin{equation*} \label{}
V^\ep(y):= \sup_{|p| \leq R} \sup_{z\in B_1(y)}  v^\ep(z,p).
\end{equation*}
Note that $V^\ep$ is stationary with respect to the translation group action. According to~\cite{AT}, the family $\{ V^\ep\}_{\ep>0}$ is locally equi-Lipschitz continuous in $\Rd$ for every realization $\omega=(\sigma,H)\in\Omega$ of the coefficients.

\emph{Step 1.}
We begin from the estimate from~\cite{AT} that, for $C>0$ depending only on $d$ and $\m$,
\begin{equation} \label{e.barrests}
 \ep V^\ep(0) \leq M_2 \left(1+\Lambda_1R^\m \right) +  C\left( \frac{\Lambda_2}{a_2} \right)^{1/(\m-1)}.
\end{equation}
This is shown by an explicit computation using smooth test functions, see \cite[Section 4]{AT}. Let~$\xi$ denote the random variable on the right side of~\eqref{e.barrests} and $I$ denote its essential infimum (with respect to $\P$):
\begin{equation*} \label{}
I: = \inf\left\{ \lambda \in \R \,:\, \P \left[ \xi < \lambda \right] > 0 \right\} < \infty.
\end{equation*}
We eventually apply Lemma~\ref{eggg} to the sequence of random fields defined by 
\begin{equation*} \label{}
X_t(y) := \frac1t \inf_{z\in B_t(y)} \sup_{0<\ep\leq 1} \left( V^\ep(z) - \frac2\ep I \right), \quad t>0. 
\end{equation*}
In the next few steps we check that the hypotheses of Lemma~\ref{eggg} hold for $X_t$.

\emph{Step 2.} We show that  
\begin{equation} \label{verrgo1}
\P \left[ \limsup_{t\to \infty} X_t(0) \leq 0 \right]  = 1.
\end{equation}
According to the ergodic theorem,
\begin{equation*} \label{}
\P \left[ \lim_{s\to \infty} \fint_{B_s} \indc_{\{\xi(\cdot) \leq 2I\}}(y) \, dy = \P \left[ \xi(0) \leq 2I  \right] \right] = 1,
\end{equation*}
Here $\indc_E : \Rd \to \R$ is the characteristic function of a Borel set $E\subseteq \Rd$. Note that $\P \left[ \xi(0) \leq 2I  \right] >0$ by the definition of~$I$ and that, if $\indc_{\{\xi(\cdot) \leq 2I\}}(y)$ does not vanish identically in $B_t$, then $X_t \leq 0$ by~\eqref{e.barrests}. This yields~\eqref{verrgo1}. 

\emph{Step 3.} We show that
\begin{equation} \label{verrgo2}
\P \left[  \limsup_{r\to 0} \limsup_{t\to \infty} \frac1t \sup_{y\in B_t} \sup_{0<\ep\leq 1} \osc_{B_{rt}(y)}  V^\ep = 0 \right] =1. 
\end{equation}
To see this, observe that~\eqref{e.entrape} implies that, for every $\ep > 0$ and $y,z\in\Rd$,
\begin{equation*} \label{}
V^\ep(y) - V^\ep(z) \leq  \tilde m_\mu(y,z) \qquad \mbox{with} \ \ \mu := \Lambda_1\!\left( R^m +1 \right).
\end{equation*}
We therefore obtain~\eqref{verrgo2} from~\eqref{e.oscbnde}. As a consequence of~\eqref{verrgo2}, we get
\begin{equation} \label{verrgo3}
\P \left[  \limsup_{r\to 0} \limsup_{t\to \infty} \sup_{y\in B_t} \osc_{B_{rt}(y)} X_t = 0 \right] =1. 
\end{equation}

\emph{Step 4.} We complete the argument. In view of~\eqref{verrgo1} and~\eqref{verrgo3}, we may apply Lemma~\ref{eggg} to conclude that
\begin{equation*} \label{}
\P \left[ \forall K>0, \ \limsup_{t\to \infty} \sup_{y\in B_{Kt}} X_t(y) \leq 0 \right]  =1. 
\end{equation*}
Using the definition of $X_t$, replacing $Kt$ by $t$ and setting $r=1/K$, this gives
\begin{equation*} \label{}
\P \left[ \forall r>0, \ \limsup_{t\to \infty}\frac1t \sup_{y\in B_{t}} \inf_{z\in B_{rt}(y)} \sup_{0< \ep\leq 1} \left( V^\ep(z) -\frac2\ep I \right) \leq 0 \right]  =1. 
\end{equation*}
Using again~\eqref{verrgo2}, we obtain
\begin{equation*} \label{}
\P \left[  \limsup_{t\to \infty}\frac1t \sup_{y\in B_{t}}  \sup_{0< \ep\leq 1} \left( V^\ep(y) -\frac2\ep I \right) \leq 0 \right]  =1. 
\end{equation*}
Using the definition of $V^\ep$ and rewriting the expression in terms of $w^\ep$, we get
\begin{equation*} \label{}
\P \left[ \limsup_{t\to \infty} \sup_{0<\ep\leq 1} \sup_{|p| \leq R} \sup_{x\in B_{\ep t}} \frac{w^\ep(x,p)-2I}{\ep t}  \leq 0 \right]  =1. 
\end{equation*}
This is actually stronger than~\eqref{e.wepgc}. Indeed:  
\begin{align*}
\limsup_{t\to \infty}\sup_{0<\ep\leq 1} \sup_{|p| \leq R} \sup_{x\in B_{\ep t}} \frac{w^\ep(x,p)-2I}{\ep t} &  = \lim_{s\to \infty} \sup_{t\geq s} \sup_{0<\ep\leq 1}\sup_{|p| \leq R} \sup_{x\in B_{\ep t}} \frac{w^\ep(x,p)-2I}{\ep t} \\
& \geq \limsup_{s\to \infty} \sup_{0<\ep\leq 1}\sup_{|p| \leq R} \sup_{x\in B_{s}} \frac{w^\ep(x,p)-2I}{s}\\
& \geq \limsup_{|x| \to \infty} \sup_{0<\ep\leq 1}\sup_{|p| \leq R} \frac{w^\ep(x,p)}{|x|}.
\end{align*}
Note that the inequality on the second line was obtained by reversing the first two supremums and then taking~$t=s/\ep$ in the supremum over~$t$. This completes the proof. 
\end{proof}

It follows from Lemma~\ref{GC} and \cite[Theorem 2.1]{AT} that, with probability one, $w^\ep(\cdot,p)$ is the unique bounded-below solution of~\eqref{macro} for every fixed $\ep> 0$ and $p\in\Rd$. That is:
\begin{multline} \label{macro-uniq}
\P \Big[ \forall p\in\Rd,\ \forall \ep > 0: \ w^\ep(\cdot,p) \ \mbox{belongs to $C^{0,1}_{\mathrm{loc}}(\Rd)$ and is the unique solution} \\ \mbox{ of~\eqref{macro}   which is bounded below on $\Rd$} \Big] = 1. 
\end{multline}

\subsection{The proof of~\eqref{macro-1}}

The next lemma is the first step in the direction of~\eqref{macro-1}. For the argument we again use Lemma~\ref{eggg}.

\begin{lem}\label{flatspot}
We have
\begin{equation} \label{flatspoke}
\P \left[ \forall p\in \Rd, \ \forall  R>0, \ \limsup_{\ep \to 0} \sup_{x\in B_R} w^\ep(x,p) \leq -\overline H_* \right] = 1.
\end{equation}
\end{lem}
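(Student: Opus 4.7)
The plan is to construct, for each $\mu > \overline H_*$, an approximate supersolution of~\eqref{macro} built from the maximal subsolutions $m_\mu$, and then to conclude by comparison with $w^\ep$. For each base point $x_0 \in \Rd$, set
\[
V^\ep_{x_0}(x) := -\mu + \ep\, m_\mu(x/\ep, x_0/\ep) - p\cdot (x - x_0).
\]
A direct computation using the equality~\eqref{m-12} gives, for $x \in \Rd\setminus \overline B_\ep(x_0)$, the identity
\[
V^\ep_{x_0} - \ep \tr\!\left(A(\tfrac x\ep) D^2 V^\ep_{x_0}\right) + H\!\left(p + DV^\ep_{x_0}, \tfrac x\ep\right) = \ep\, m_\mu(x/\ep, x_0/\ep) - p\cdot (x - x_0),
\]
so $V^\ep_{x_0}$ is a genuine supersolution of~\eqref{macro} on that region exactly when the right-hand side is nonnegative.

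For any $p$ with $\overline H(p)<\mu$, the representation $\overline m_\mu(y)=\sup\{q\cdot y:\overline H(q)\leq\mu\}$ from~\eqref{mm} gives $\overline m_\mu(y) - p\cdot y \geq c|y|$ for some $c=c(\mu,p)>0$. Combined with the shape theorem (Proposition~\ref{shape}), this yields $\ep\, m_\mu(x/\ep, x_0/\ep) - p\cdot(x - x_0) \geq -o_\ep(1)$ almost surely, uniformly for $x, x_0 \in B_R$, and moreover $V^\ep_{x_0}(x)-w^\ep(x)\to +\infty$ as $|x|\to\infty$ by the sublinear growth bound from Lemma~\ref{GC}. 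A comparison argument using the machinery of~\cite{AT} then yields $w^\ep(x,p) \leq V^\ep_{x_0}(x) + o_\ep(1)$ in $\Rd\setminus\overline B_\ep(x_0)$.

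Evaluating this bound on $\partial B_\ep(x_0)$ and using the (uniform-in-$\ep$) local Lipschitz estimates on $w^\ep$ from~\cite{AT} to extend from $\partial B_\ep(x_0)$ to $x_0$, one gets
\[
w^\ep(x_0,p) \leq -\mu + \ep\, K_\mu(x_0/\ep) + O(\ep) + o_\ep(1),
\]
where the $\ep K_\mu$ term comes from the oscillation bound~\eqref{e.oscloc} applied on the unit ball in microscopic coordinates. Since $K_\mu\in L^\alpha(\Omega)$ with $\alpha>d$, a standard ergodic/Borel--Cantelli argument gives $\ep\sup_{x_0\in B_R} K_\mu(x_0/\ep) \to 0$ almost surely, hence $\limsup_{\ep\to 0}\sup_{B_R}w^\ep(\cdot,p)\leq -\mu$ $\P$--a.s. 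Intersecting events over rational $\mu \in (\overline H(p),\infty)$, $p\in\Qd$, and $R\in\N$, and using the continuity estimate~\eqref{wepusc} for $w^\ep$ in $p$, one obtains $\limsup_{\ep\to 0}\sup_{B_R}w^\ep(\cdot,p) \leq -\overline H(p)\leq -\overline H_*$ simultaneously for all $p\in\Rd$ and $R>0$, which implies~\eqref{flatspoke}. The principal obstacle is the comparison argument on $\Rd\setminus\overline B_\ep(x_0)$ in the presence of the singular inner ball: this requires either a strict-supersolution perturbation of $V^\ep_{x_0}$ or a doubling-of-variables argument, along with careful uniform tracking of $o_\ep(1)$ errors across the continuum of base points $x_0\in B_R$.
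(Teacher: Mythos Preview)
Your approach is quite different from the paper's and aims at a stronger conclusion (essentially the bound $\limsup w^\ep \leq -\overline H(p)$, which is Step~2 of Proposition~\ref{p.SH}), but it contains a genuine gap in the comparison step.

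The problem is the inner boundary. You want to compare $w^\ep$ with $V^\ep_{x_0}$ on the punctured domain $\Rd\setminus\overline B_\ep(x_0)$ and conclude $w^\ep\le V^\ep_{x_0}+o_\ep(1)$ there. But the comparison principle for~\eqref{macro} on a domain with boundary requires an ordering on that boundary, and you have no control of $w^\ep - V^\ep_{x_0}$ on $\partial B_\ep(x_0)$; in fact, since $m_\mu(\cdot,x_0/\ep)\le 0$ on $\overline B_1(x_0/\ep)$, the function $V^\ep_{x_0}$ is \emph{below} $-\mu+|p|\ep$ there, which tells you nothing about $w^\ep$ from above. Neither a strict-supersolution perturbation nor doubling of variables supplies the missing boundary information. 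A second, related issue is the claim $V^\ep_{x_0}(x)-w^\ep(x)\to+\infty$ as $|x|\to\infty$: the shape theorem gives $\ep\,m_\mu(x/\ep,x_0/\ep)\approx\overline m_\mu(x-x_0)$ only on \emph{compact} sets as $\ep\to 0$, so for fixed $\ep$ you have no linear lower bound on $m_\mu$ at spatial infinity and hence no growth control on $V^\ep_{x_0}$.

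The paper avoids both difficulties in two different ways. For Lemma~\ref{flatspot} itself, it uses no barrier at all: a soft compactness argument shows that if $\ep v^\ep$ stayed above $-\overline H_*+\eta$ along a subsequence, the (locally Lipschitz) shifted functions would converge to a global subsolution of $-\tr(AD^2\tilde v)+H(D\tilde v,y)\le \overline H_*-\eta$, contradicting the definition of $\overline H_*$; Lemma~\ref{eggg} then makes this uniform in~$x$. For the sharper bound $\le -\overline H(p)$ (Step~2 of Proposition~\ref{p.SH}), the paper places the vertex of $m_\mu$ \emph{far away} from the test point, in a direction $e$ with $p\in\partial\overline m_\mu(e)$, and localizes via a penalized auxiliary function so that the comparison takes place on a fixed \emph{bounded} domain $U_\ep\subseteq B_s(z)$ not containing the vertex. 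This removes the inner-ball boundary entirely and replaces the far-field issue by a compact-set estimate handled by the shape theorem. If you want to salvage your barrier idea, that relocation of the vertex is the missing ingredient.
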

\begin{proof}
Here we employ a soft compactness argument using the rescaled functions $v^\ep$ defined in~\eqref{weprsc}. Denote the event 
\begin{equation*} \label{}
E:= \Bigg\{ (\sigma,H) \in \Omega \,:\, \overline H_* = \inf\left\{ \mu \in \R\,:\,  \mbox{there exists} \ \ w \in \USC(\Rd) \ \mbox{satisfying~\eqref{e.maxsube}}  \right\} \Bigg\}.
\end{equation*}
Recall from Lemma~\ref{l.detcon} that $\P[E] = 1$.

\emph{Step 1.} We first show that, for all $p\in \Rd$ and $\omega\in E$,
\begin{equation}\label{step3-1}
\limsup_{\ep \to 0} \sup_{z\in B_1} \ep v^\ep(z,p) \le -\barH_*.
\end{equation}
Suppose on the contrary that there exist $\eta>0$ and a subsequence $\ep_k \to 0$ such that,  for every $k\in\N$, $${\ep_k} \sup_{z\in B_1} v^{\ep_k}(z,p) \geq -\overline H_* + \eta.$$
Define the function 
\begin{equation*} \label{}
\tilde v^\ep(y,p):= p\cdot y + v^\ep(y,p) - \sup_{z\in B_1} v^\ep(z,p).
\end{equation*}
According to the local Lipschitz estimates~\cite[Proposition 3.1]{AT} and~\eqref{e.wepgc}, the family $\{ \tilde v^\ep \}_{\ep > 0}$ is uniformly bounded in $C^{0,1}(B_s)$ for every $s>0$. By taking a further subsequence of $\{ \ep_k\}$, we may suppose that $\tilde v^{\ep_k}$ converges locally uniformly on $\Rd$ to a  function $\tilde v\in C^{0,1}_{\mathrm{loc}}(\Rd)$. In view of the fact that $\tilde v^\ep$ satisfies the equation
\begin{equation*} \label{}
\ep \tilde v^\ep - \tr\left (A(y) D^2 \tilde v^\ep\right) +H\left(D\tilde v^\ep,y\right)= -\ep \sup_{z\in B_1} v^\ep(z,p) \quad \mbox{in}\ \R^d,
\end{equation*}
we obtain, by the stability of viscosity solutions under local uniform convergence, that $\tilde v$ satisfies
\begin{equation*} \label{}
- \tr\left (A(y) D^2 \tilde v \right) +H\left(D\tilde v,y\right) \leq \overline H_* - \eta \quad \mbox{in}\ \R^d.
\end{equation*}
This contradicts the assumption that $\omega=(\sigma,H) \in E$ and completes the proof of~\eqref{step3-1}. As a consequence, we obtain that
\begin{equation} \label{step3-1q}
\P\left[ \forall p\in \Rd, \ \limsup_{\ep \to 0} \sup_{z\in B_1} \ep v^\ep(z,p) \le -\barH_* \right] = 1. 
\end{equation}

\emph{Step 2.}
To obtain the conclusion of the lemma from~\eqref{step3-1}, we apply Lemma~\ref{eggg} to the family of random variables
\begin{equation*} \label{}
X_t:= \sup_{z\in B_1} \ep v^\ep(z,p),\quad \mbox{with}  \  t = \ep^{-1}.
\end{equation*}
The first hypothesis of Lemma~\ref{eggg} is satisfied by~\eqref{step3-1} and the second hypothesis is confirmed by~\eqref{e.entrape} and~\eqref{e.oscbnde}. The conclusion of Lemma~\ref{eggg} yields that, for every $p\in \Rd$,
\begin{equation*} \label{}
\P\left[ \forall R>0, \ \limsup_{\ep \to 0} \sup_{z\in B_{R/\ep}} \ep v^\ep(z,p) \le -\barH_* \right] = 1. 
\end{equation*}
Using~\eqref{wepusc} and intersecting over all events corresponding to rational $p$, we obtain
\begin{equation*} \label{}
\P\left[ \forall p\in\Rd, \ \forall R>0, \ \limsup_{\ep \to 0} \sup_{z\in B_{R/\ep}} \ep v^\ep(z,p) \le -\barH_* \right] = 1. 
\end{equation*}
This is equivalent to~\eqref{flatspoke}.  
\end{proof}

We now show that~\eqref{macro} homogenizes to~\eqref{macro-homed}.

\begin{prop}
\label{p.SH}
The assertion~\eqref{macro-1} holds.
\end{prop}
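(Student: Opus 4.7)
My plan is to establish the two one-sided bounds
$\limsup_{\ep\to 0}\sup_{x\in B_R} w^\ep(x,p)\le -\overline H(p)$ and
$\liminf_{\ep\to 0}\inf_{x\in B_R} w^\ep(x,p)\ge -\overline H(p)$ separately,
first for a fixed $p$ and $\P$-almost surely, then intersecting events over a countable dense set of $p$'s and upgrading to all $p\in\Rd$ using the concavity of $p\mapsto w^\ep$ (Lemma~\ref{wepconca}) and the continuity of $\overline H$ (Lemma~\ref{Hbarpropts}). The ergodic upgrade from a bound on $\sup_{B_1}$ at microscopic scale to $\sup_{B_R}$ at macroscopic scale is handled throughout by Lemma~\ref{eggg}, exactly as in Step~2 of the proof of Lemma~\ref{flatspot}.

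For the upper bound, I would sharpen Step~1 of Lemma~\ref{flatspot}'s proof. Running the same contradiction and compactness scheme with $\tilde v^\ep(y):=p\cdot y + v^\ep(y)-\sup_{B_1} v^\ep$, and using the local Lipschitz estimates from~\cite{AT}, I extract a subsequential locally uniform limit $\tilde v\in C^{0,1}_{\mathrm{loc}}(\Rd)$ satisfying
\begin{equation*}
-\tr(A(y)\,D^2\tilde v)+H(D\tilde v,y)\le \overline H(p)-\eta\quad\text{in }\Rd.
\end{equation*}
At this point, rather than merely invoking the definition of $\overline H_*$, I would invoke the min--max representation~\eqref{e.minmax} of $\overline H(p)$, which requires in addition that $\tilde v$ lie in the class $\mathcal L_p$ appearing in that formula, namely $\liminf_{|y|\to\infty}(\tilde v(y)-p\cdot y)/|y|\ge 0$. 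Since $\tilde v^\ep(y)-p\cdot y = v^\ep(y)-\sup_{B_1} v^\ep$, the required growth of $\tilde v$ traces back to the uniform-in-$\ep$ sublinear control on $v^\ep$ provided by Lemma~\ref{GC}, combined with a diagonal argument coordinating $\ep\to 0$ and $|y|\to\infty$. The resulting inequality $\overline H(p)-\eta\ge \overline H(p)$ is the desired contradiction.

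For the lower bound, I would exploit the maximal subsolutions $m_\mu$ together with the geometric input of Lemma~\ref{geom}. Fix $\mu=\overline H(p)$. By Lemma~\ref{geom}, applied first at an exposed point of $\{\overline H\le\mu\}$ (the general case then follows from Straszewicz's theorem and the continuity of $\overline H$), there is $e\in\partial B_1$ with $\overline m_\mu(e)=p\cdot e$. On a small ball $B_\rho$, define
\begin{equation*}
V^\ep_\rho(x) := -\mu + \ep\,m_\mu(x/\ep,0) - p\cdot x - \sup_{B_\rho}\bigl(\ep\,m_\mu(\cdot/\ep,0)-p\cdot\cdot\bigr).
\end{equation*}
By~\eqref{m-11}, $V^\ep_\rho$ is a subsolution of~\eqref{macro} on $B_\rho$, and by Proposition~\ref{shape} its sup is asymptotically achieved near $\rho e$ as $\ep\to 0$ with value $-\mu+o(1)$. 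Extending $V^\ep_\rho$ to a globally bounded-below subsolution by taking the maximum with the baseline constant $-\Lambda_1(|p|^\m+1)$ (a legitimate subsolution by~\eqref{e.Hsubq}, as in Remark~\ref{e.huppy}) and applying the comparison principle together with the uniqueness~\eqref{macro-uniq}, I obtain $w^\ep\ge V^\ep_\rho$ in the relevant region. Letting first $\ep\to 0$ and then $\rho\to 0$, and invoking Lemma~\ref{eggg} to spread the resulting pointwise bound, yields $\liminf w^\ep\ge -\overline H(p)$ uniformly on $B_R$.

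The principal obstacle I anticipate is the verification that $\tilde v\in\mathcal L_p$ in the upper bound: Lemma~\ref{GC} controls $|v^\ep(y)|/|y|$ only for $|\ep y|\gtrsim 1$, a regime that escapes to infinity as $\ep\to 0$ with $y$ fixed, so the growth condition at infinity of the limit $\tilde v$ must be teased out through a careful diagonal selection rather than a direct pass-to-the-limit. A secondary difficulty is in the lower bound: the construction $V^\ep_\rho$ is only a subsolution on $B_\rho$, and arranging a global bounded-below extension compatible with the comparison principle for~\eqref{macro} requires some care near $\partial B_\rho$.
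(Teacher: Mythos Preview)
Your overall strategy---prove two one-sided bounds for fixed $p$, then intersect over rational $p$ and use concavity of $p\mapsto w^\ep$---is sound and matches the paper.  However, both of your one-sided arguments contain genuine gaps that you correctly flag as ``obstacles'' but, I believe, underestimate.

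\textbf{Upper bound.}  The min--max route via~\eqref{e.minmax} requires $\tilde v\in\mathcal L_p$, i.e.\ $\liminf_{|y|\to\infty}(v^{\ep_k}(y)-\sup_{B_1}v^{\ep_k})/|y|\ge 0$ surviving the limit $k\to\infty$.  But Lemma~\ref{GC} controls $v^\ep(y)/|y|$ only in the regime $|\ep y|\to\infty$, whereas the local-uniform limit $\tilde v$ is formed by sending $\ep\to 0$ with $y$ \emph{fixed}, so $|\ep y|\to 0$.  No diagonal selection bridges these regimes: for each fixed large $|y|$, the quantity $v^{\ep_k}(y)-\sup_{B_1}v^{\ep_k}$ is governed by~\eqref{e.entrape}, which gives only an \emph{upper} bound of order $m_\mu(y,0)\sim\mm_\mu(y/|y|)|y|$ and no useful lower bound.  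In fact the existence of a global subsolution at level $\overline H(p)-\eta$ yields only $\overline H_*\le \overline H(p)-\eta$, which is exactly Lemma~\ref{flatspot} and no more.  The paper instead obtains the upper bound by a \emph{comparison} argument on a bounded set: it perturbs $w^\ep$ into a strict subsolution $W^\ep$ and compares it to $m_\mu(\cdot,z-(s+1)e)$, which by~\eqref{m-12} is a \emph{supersolution} away from its vertex.  The supersolution property of $m_\mu$ is the crucial ingredient you are missing.

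\textbf{Lower bound.}  Your $V^\ep_\rho$ is a subsolution of~\eqref{macro} on $B_\rho$ with $\sup_{B_\rho}V^\ep_\rho=-\mu$, but the extension by $\max$ with the constant $-\Lambda_1(|p|^\m+1)$ fails: on $\partial B_\rho$ one has, after passing to the limit via Proposition~\ref{shape}, $V^\ep_\rho(x)\approx -\mu+(\mm_\mu(x)-p\cdot x)-\sup_{B_\rho}(\mm_\mu-p\cdot\cdot)$, and since $\mm_\mu(y)-p\cdot y\ge 0$ with equality at $0$, this is typically much larger than $-\Lambda_1(|p|^\m+1)\le -\mu$.  Thus the max is not a viscosity subsolution across $\partial B_\rho$.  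Even granting the extension, you would obtain only $\sup_{B_\rho}w^\ep\ge -\mu+o(1)$, i.e.\ the bound at a single point; spreading via Lemma~\ref{eggg} is fine, but the construction itself does not deliver that single point.  The paper avoids global extensions entirely: it assumes $w^\ep(z,p)<-\overline H(p)-\delta$, perturbs $w^\ep$ into a strict \emph{supersolution} $W^\ep$ on a bounded set $U_\ep\subseteq B_s(z)$, and compares with the subsolution $m_\mu(\cdot,(z-te)/\ep)$ with $t\gg s$, using that $p=D\mm_\mu(e)$ at an \emph{exposed} point (via Straszewicz) to force flatness and derive a contradiction.

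In short, the paper's proof hinges on the fact that $m_\mu$ solves the \emph{equation}~\eqref{m-12} (not merely the inequality~\eqref{m-11}) away from its vertex, allowing it to serve as both sub- and supercorrector in two comparison arguments on bounded domains.  Your proposal uses only the subsolution side and attempts to work globally, which is where both arguments break down.
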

\begin{proof}

The argument is deterministic and based on the comparison principle. To give an overview of the proof, we introduce the following  events:
\begin{equation*} \label{}
E_1:= \left\{ (\sigma,H) \in \Omega \,:\, \forall \mu \geq \overline H_*, \ \forall R>0, \ \limsup_{t\to \infty} \sup_{y,z\in B_R}\left| \frac{m_\mu(ty,tz)}t -  \mm_\mu(y-z) \right| = 0 \right\},
\end{equation*}
\begin{equation*} \label{}
E_2:= \Bigg\{ (\sigma,H) \in \Omega \,:\, \forall R> 0, \ \limsup_{|x|\to\infty} \sup_{|p| \leq R} \sup_{0<\ep\leq 1} \frac{|w^\ep(x,p)|}{|x|} = 0 \Bigg\},
\end{equation*}
\begin{equation*} \label{}
E_3:= \Bigg\{ (\sigma,H) \in \Omega \,:\,\forall p\in \Rd, \ \limsup_{\ep \to 0} \sup_{x\in B_R} w^\ep(x,p) \leq -\overline H_* \Bigg\}
\end{equation*}
and finally
\begin{equation*} \label{}
E_4:= \left\{ (\sigma,H) \in \Omega \,:\, \forall p\in \Rd, \ \forall R>0, \  \limsup_{\ep \to 0} \sup_{x\in B_R} \left| w^\ep(x,p) +\overline H(p) \right| = 0 \right\}.
\end{equation*}
According to~Proposition~\ref{shape}, Lemma~\ref{GC} and Lemma~\ref{flatspot}, we have 
\begin{equation*} \label{}
\P\big[ E_1\cap E_2 \cap E_3 \big] =1.
\end{equation*}
To obtain $\P[E_4]=1$, it therefore suffices to demonstrate that
\begin{equation} \label{SH.wts}
E_1 \cap E_2 \cap E_3 \subseteq E_4. 
\end{equation}
Thus for the remainder of the proof we fix $p\in \Rd$, $R>0$ and $(\sigma,H) \in E_1\cap E_2\cap E_3$  and argue that 
\begin{equation} \label{e.bleckt}
\limsup_{\ep \to 0} \sup_{x\in B_R} \left| w^\ep(x,p) +\overline H(p) \right| = 0.
\end{equation}
The proof of~\eqref{e.bleckt} is broken into two steps.

\smallskip

{\it Step 1.} We show that
\begin{equation}\label{step1}
\liminf_{\ep \to 0} \inf_{z\in B_R} w^\ep(z,p) \geq -\overline H(p).
\end{equation}
We begin with some reductions. By the concavity of the map $\hat p\mapsto w^\ep(x,\hat p)$, we may assume without loss of generality that $p$ is an extreme point of $\left\{ \hat p\,:\, \barH(\hat p) \leq \barH(p) \right\}$. Second, by~\eqref{wepusc}, we may also suppose that $\barH(p) > \barH_*$. Next, Straszewicz's theorem~\cite[Theorem~18.6]{Rock} and~\eqref{wepusc} permit us to further suppose that $p$ is an \emph{exposed} point of $\left\{ \hat p\,:\, \barH(\hat p) \leq \barH(p) \right\}$. This is useful in view of~\eqref{e.pbndry} and Lemma~\ref{geom}, which imply the existence of $e \in \partial B_1$ such that $\mm_{\mu}(e)=e \cdot p$ and $\mm_\mu$ is differentiable at $e$ with $p = D \mm_{\mu}(e)$, where as usual we have set $\mu:=\barH(p)$ for convenience. In view of the limit~\eqref{mphg}, this forces the function $m_\mu(\cdot,z-te)$, with $t>0$ very large, to be very ``flat" in large balls centered at $z$, as we will see. This is what allows us to use this function as an ``approximate subcorrector" in order to bound $w^\ep$ from below. 

We proceed with the demonstration of \eqref{step1} by supposing that $- \overline H(p) - w^\ep(z,p) \geq \delta > 0$ for some~$z\in B_R$ and deriving a contradiction if $0< \ep \leq 1$ is too small. The idea is to compare~$w^\ep(\cdot,p)$ in the ball $B_s(z)$, for a large enough but fixed $s>0$, to the function $x\mapsto -p\cdot (x-z+te) + \ep m_\mu(x/\ep,(z-te)/\ep)$ for $t\gg s$. We argue that the former is a strict supersolution of the equation solved by the latter, and then we derive a contradiction by showing that their difference has a local minimum. To ensure that we can touch the first function from below by the second, we use the fact that both functions are expected to be ``flat" near~$z$ (for the second function, this is due to the fact that~$p=D\mm_\mu(e)$), and we add a small linearly growing perturbative term made possible by the positivity of $\delta$. 

In order to prepare $w^\ep(\cdot,p)$ for comparison, we take $c>0$  and $\lambda >1$ to be selected below and define the auxiliary function
\begin{equation*}\label{}
W^\ep(x):= \lambda \left ( w^\ep(x,p) - w^\ep(z,p) \right )+ c \delta \left(  \left( 1 + |x-z|^2 \right)^{1/2} - 1\right).
\end{equation*}
Since~$\omega\in E_2$, there exists an $s>0$, which does not depend on $z$ or $\ep > 0$, such that 
\begin{equation*}\label{}
U_\ep := \left\{ x\in \Rd \,:\, W^\ep(x) \leq \frac14\delta \right\} \subseteq B_s(z)
\end{equation*}
We claim that, by choosing~$\lambda$ sufficiently close to~$1$ and~$c>0$ sufficiently small depending on $\lambda$, then we have
\begin{equation}\label{step1v}
 - \tr\left(A\left( \frac x\ep \right) D^2W^\ep \right) + H\left( p+DW^\ep,\frac x\ep \right) \geq \overline  H(p) + \frac12 \delta \quad \mbox{in} \ U_\ep. 
\end{equation}
In order to verify \eqref{step1v}, take any smooth test function $\varphi$ such that $v^\ep-\varphi$ has a strict local minimum at $x_0\in U_\ep$. Set $\psi(x):=\left( 1 + |x-z|^2 \right)^{1/2}$.
Then $w^\ep-\lambda^{-1}(\varphi+c\delta \psi)$ has a strict local minimum at $x_0$. Using the equation satisfied by $w^\ep$ and the definition of viscosity supersolution, we obtain
\begin{equation*}
w^\ep(x_0) -\ep \tr\left(A\left( \frac {x_0}{\ep} \right)\lambda^{-1} D^2(\varphi+c \delta \psi)(x_0) \right) + H\left( p+\lambda^{-1} D(\varphi+c\delta \psi)(x_0),\frac {x_0}{\ep} \right) \geq 0.
\end{equation*}
The convexity of $H$ gives
\begin{multline*}
H\left( p+\lambda^{-1} D(\varphi+c\delta \psi)(x_0),\frac {x_0}{\ep} \right) \leq \lambda^{-1}H\left( p+D\varphi(x_0),\frac {x_0}{\ep} \right) \\
+(1-\lambda^{-1}) H\left( p+(\lambda-1)^{-1} c\delta D\psi(x_0),\frac {x_0}{\ep} \right).
\end{multline*}
Combining the above computations and using $x_0\in U_\ep$, we deduce that, for $\lambda$ sufficiently close to~$1$ and~$c>0$ sufficiently small depending on $\lambda$,
\begin{equation*}
 - \tr\left(A\left( \frac {x_0}\ep ,\omega\right) D^2\varphi(x_0) \right) + H\left( p+D\varphi(x_0),\frac {x_0}\ep,\omega \right) \geq \overline  H(p) + \frac12 \delta. 
\end{equation*}
This completes the proof of \eqref{step1v}.

We may now apply the comparison principle (c.f.~\cite[Theorem 2.2]{AT}) to conclude that, for every $t \geq s+1$,
\begin{multline}\label{step1cp}
\inf_{ x\in U_\ep} \left( W^\ep(x) + p\cdot(x - z +te) - \ep m_\mu\left(\frac{x}{\ep},\frac{z-te}{\ep}\right) \right) \\ = \inf_{x\in \partial U_\ep} \left( W^\ep(x) + p\cdot(x-z+te) - \ep m_\mu\left(\frac{x}{\ep},\frac{z-te}{\ep}\right)  \right).
\end{multline}
Estimating the infimum on the left side of \eqref{step1cp} by taking $x=z$ and recalling that $W^\ep(z) = 0$, and the term on the right side by using that $W^\ep \equiv \delta/4$ on $\partial U_\ep$ and $\partial U_\ep \subseteq B_s(z)$, we conclude after a rearrangement that, for every $t\geq s+1$,
\begin{equation}\label{step1ag}
 \inf_{x\in B_s(z)}\left( p\cdot(x-z) + \ep m_\mu\left(\frac{z}{\ep},\frac{z-te}{\ep}\right) - \ep  m_\mu\left(\frac{x}{\ep},\frac{z-te}{\ep}\right) \right) \leq -\frac14\delta.
\end{equation}
This holds for every $z\in B_R$ and $\ep> 0$ for which $- \overline H(p) - w^\ep(z,p) \geq \delta > 0$. Therefore, if $- \overline H(p) - w^{\ep_j}(z_j,p) \geq \delta$ along subsequences $\{ z_j \}_{j\in \N} \subseteq B_R$ and $\ep_j\to 0$, then by passing to limits in~\eqref{step1ag}, using~\eqref{mphg}, we obtain, for every $t\geq s+1$,
\begin{equation*}\label{}
\inf_{x\in B_{s}} \left( p\cdot x + \mm_\mu(te) - \mm_\mu(x+te) \right) \leq -\frac14\delta.
\end{equation*}
This contradicts the fact that $p=D\mm_\mu(e)$, since the latter implies, in view of the positive homogeneity of $\mm_\mu$, that 
 \begin{equation}\label{e.limqua}
\lim_{t\to \infty} \sup_{x\in B_s} \left| \mm_\mu(x+te) -  \mm_\mu(te) - p\cdot x \right| =0.
\end{equation}
This completes the proof of~\eqref{step1}.

 \smallskip

{\it Step 2.} We demonstrate that
\begin{equation}\label{step2}
\limsup_{\ep \to 0} \sup_{z\in B_R} w^\ep(z,p) \leq -\overline H(p).
\end{equation}
We may suppose that $\barH(p) > \barH_*$, since otherwise the claim follows from $\omega \in E_3$. 

The argument is similar to one introduced in~\cite{AS2}, relying on the limit~\eqref{mphg} and using $m_\mu$ as a supercorrector. Here it is a bit more simple then Step 1, since we do not need to use Straszewicz's theorem or to restrict our attention to exposed points of the sublevel set of $\barH$. Applying Lemma~\ref{geom} in view of~\eqref{e.pbndry} and the assumption that $\overline H(p) > \overline H_*$, we may select $e\in \partial B_1$ such that $p\in \partial\mm_{\mu}(e)$ and~$\mm_\mu(e)=e\cdot p$, where as usual we set $\mu:=\barH(p)$. The reason we do not need $p=D\mm_\mu(e)$ is because $m_\mu$ will be used as a supercorrector; so the fact that it may not be flat and rather ``bend upward" like a cone can only help in the comparison argument.

We consider a point $z\in B_s$ and $\ep,\delta > 0$ such that $w^{\ep}(z,p,\omega)+\barH(p) \ge \delta>0$. With $c>0$ and $\lambda<1$ to be selected, we consider the auxiliary function
\begin{equation}\label{}
W^\ep(x):=\lambda \left ( w^\ep(x,p) - w^\ep(z,p) \right )- c \delta \left( 1 + |x-z|^2 \right)^{\frac12}+ c \delta.
\end{equation}
Since $\omega\in E_2$, there exists $s>0$, which does not depend on $z$ or $\ep$,  such that 
\begin{equation}\label{}
U_\ep := \left\{ x\in \Rd \,:\, W^\ep(x) \geq -\frac14\delta \right\} \subseteq B_s(z).
\end{equation}
Choosing $\lambda$ sufficiently close to $1$ and $c>0$ sufficiently small depending on $\lambda$ and after similar computations arguments as in the demonstration of~\eqref{step1v}, we find that
\begin{equation}\label{}
 - \tr\left(A\left( \frac x\ep \right) D^2W^\ep \right) + H\left( p+DW^\ep,\frac x\ep \right) \leq \overline  H(p) - \frac12 \delta \quad \mbox{in} \ U_\ep. 
\end{equation}
The comparison principle yields
\begin{multline}\label{step2cp}
\inf_{ x\in U_\ep} \left( \ep m_\mu\left(\frac{x}{\ep},\frac{z-(s+1)e}{\ep}\right)-W^\ep(x)- p\cdot(x - z +(s+1)e)  \right) \\ = \inf_{x\in \partial U_\ep} \left(  \ep m_\mu\left(\frac{x}{\ep},\frac{z-(s+1)e}{\ep}\right)-W^\ep(x) - p\cdot(x-z+(s+1)e)  \right).
\end{multline}
Using that $W^\ep(z) = 0$ and $W^\ep \equiv -\delta/4$ on $\partial U_\ep \subseteq B_s(z)$ and rearranging, we obtain
\begin{equation} \label{step2ag}
 \inf_{x\in B_s(z)}\left( \ep  m_\mu\left(\frac{x}{\ep},\frac{z-(s+1)e}{\ep}\right)- \ep  m_\mu\left(\frac{z}{\ep},\frac{z-(s+1)e}{\ep}\right)-p\cdot (x-z)\right) \leq -\frac14\delta.
\end{equation}

To obtain a contradiction, we suppose that $w^{\ep_j}(z_j,p)+\barH(p) \ge \delta>0$ for sequences $\{ z_j\}_{j\in\N} \subseteq B_R$ and $\ep_j \to 0$. Applying~\eqref{step2ag} and sending $j\to \infty$ yields, in light of~\eqref{mphg}, 
\begin{equation}\label{}
\inf_{x\in B_{s}} \left(\mm_\mu(x+(s+1)e) - \mm_\mu((s+1)e) -  p\cdot x)\right) \leq -\frac14\delta.
\end{equation}
Since $\mm_\mu((s+1)e) =   (s+1)e\cdot p$ we conclude that, for some $x\in B_{s}$,
\begin{equation}\label{}
\mm_\mu(x+(s+1)e) -  p\cdot (x+(s+1)e)) \leq -\frac18\delta.
\end{equation}
This contradicts that $p\in \partial \mm_\mu(e)$ and finishes Step~2 and the proof of the Proposition. 
\end{proof}

\begin{remark}
The reader may object to the proof of Theorem~\ref{Hg} on the grounds that several steps in the proof are not as ``quantifiable" as promised in the introduction. In particular, it seems at first glance impossible to quantify (i) the limit in~\eqref{e.limqua} without extra information about the shape of the level sets of~$\barH$ (which is not easy to obtain) and (ii) Lemma~\ref{flatspot}, since it is obtained by a compactness argument.

About (i): this step is actually quantifiable because we can approximate the level sets of~$\barH$ by nice sets with positive curvature. Rather than the exposed points of the sublevel sets of~$\barH$, we may instead consider ``points of positive curvature" of the boundary of the level set, that is, points which also lie on the boundary of a large ball which contains the level set. The radius of this ball controls the rate of the limit~\eqref{e.limqua} and the error this introduces is relatively small. The details will appear in~\cite{AC}.

The second objection is more serious, but the phenomenon we encounter here is not artificial or a limitation of the method. Indeed, it was shown already in the first-order case~\cite{ACS} that the rate of convergence in the limit in~Lemma~\ref{flatspot} may be arbitrarily slow (even with a finite range of dependence quantifying the ergodicity assumption). In this sense, the proof above seems to optimally capture the underlying phenomena driving the homogenization of Hamilton-Jacobi equations in random media.
\end{remark}

\section{Homogenization: the proof of Theorem~\ref{Hg}}\label{H}

In this section we present the proof of our main result, Theorem~\ref{Hg}. The convergence result is obtained from the classical perturbed test function argument, suitably modified to handle the lack of uniform Lipschitz estimates for weakly coercive Hamiltonians. The argument can be seen as a method for showing that the homogenization result of~\eqref{macro-1}, which is a special case of~Theorem~\ref{Hg}, is actually strong enough to imply the theorem.

\smallskip

As in the previous section, we assume throughout that~$\P$ is a probability measure on~$(\Omega,\F)$ satisfying~\eqref{e.stat},~\eqref{e.erg} and~\eqref{e.wkcoer}.

\subsection{Well-posedness and basic properties}
Before giving the proof of homogenization, we first consider the question of well-posedness of solutions of the time-dependent initial-value problem
\begin{equation}\label{e.timeHJ}
\left\{ \begin{aligned}
& u^\ep_t -\ep \tr\left(A\left(\frac x\ep\right)D^2 u^\ep\right) + H\left( Du^\ep,\frac x\ep \right) = 0 & \mbox{in} & \ \Rd \times (0,\infty),\\
& u^\ep(\cdot,0) = g \in \BUC(\Rd). & &
\end{aligned} \right.
\end{equation}
For each $\ep > 0$, $g\in \BUC(\Rd)$ and $(x,t)\in \Rd\times(0,\infty)$, we define the random variable 
\begin{multline} \label{uepform2}
u^\ep(x,t,g) := \sup\Bigg\{ w(x,t) \,:\, w\in \USC(\Rd \times [0,t]) \, \mbox{is a subsolution of~\eqref{e.VHJ} in $\Rd \times [0,t)$,} \\ \limsup_{|x| \to \infty} \sup_{0<s\leq t} \frac{w(x,s)}{|x|} = 0 \ \ \mbox{and} \ \ w(\cdot,0)\leq g \ \mbox{on} \ \Rd \Bigg\}.
\end{multline}
This is the candidate for the (unique, we hope) solution of~\eqref{e.timeHJ}. Observe that we have 
\begin{equation} \label{ueplb}
u^\ep(x,t,g) \geq - \Lambda_1t + \inf_{\Rd} g,
\end{equation}
since the function on the right belongs to the admissible class in~\eqref{uepform2} by~\eqref{e.Hsubq} and~\eqref{e.sigbnd}. 

Similar to the situation for the approximate cell problem, checking that $(x,t) \mapsto u^\ep(x,t,g)$ does indeed solve~\eqref{e.timeHJ} reduces to proving a sublinear growth condition at infinity (uniformly in time). We remark that this is of interest only in the non-uniform coercivity case, since in this case well-posedness of~\eqref{e.timeHJ} is classical. 

\begin{lem}\label{GCt}
We have 
\begin{equation*} \label{}
\P \Big[ \forall  T>0, \ \forall g\in \BUC(\Rd),  \ \limsup_{|x| \to \infty} \sup_{0<t\leq T} \sup_{0<\ep\leq 1} \frac{|u^\ep(x,t,g)|}{|x|} = 0 \Big] = 1.
\end{equation*}
\end{lem}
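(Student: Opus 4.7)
The lower bound is immediate from \eqref{ueplb}: $u^\ep(x, t, g) \geq -\Lambda_1 T - \|g\|_\infty$, uniformly in $x \in \Rd$, $\ep \in (0, 1]$ and $t \in (0, T]$, so the negative part of $u^\ep/|x|$ tends to zero. I focus on the upper bound, where the plan is to mimic the four-step structure of the proof of Lemma~\ref{GC}, with one substantial new ingredient.

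That new ingredient is a pointwise upper bound on $u^\ep$ at the origin analogous to \eqref{e.barrests}. I would construct an explicit barrier of the form $\Phi^\ep(x, t) := \|g\|_\infty + \ep k(1 - |x/\ep|^2)^{-1/(\m-1)} + \xi t$ on $B_\ep \times [0, T]$, with $k$ and $\xi$ chosen in terms of $\Lambda_1, \Lambda_2, a_1(0), M_1(0)$ so that $\Phi^\ep$ is a viscosity supersolution of~\eqref{e.timeHJ} on its domain, diverges to $+\infty$ as $x \to \partial B_\ep$, and satisfies $\Phi^\ep(\cdot, 0) \geq g$. Comparison then yields $u^\ep(0, t, g) \leq \|g\|_\infty + \ep k + \xi t$; the key point is that $\xi$ can be chosen as a stationary random variable (when the barrier is centered at a general point $y$ instead of $0$) with finite essential infimum $I$, by the weak coercivity assumption \eqref{e.wkcoer}.

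With this pointwise estimate in hand, I would follow the remaining steps of Lemma~\ref{GC} nearly verbatim. The ergodic theorem produces a positive density of translations $\tau_z$ with $\xi(\tau_z\omega) \leq 2I$; a local spatial oscillation estimate, obtained by regarding $u^\ep(\cdot, t, g)$ as a subsolution of the stationary inequality with $-u^\ep_t$ as a bounded source term and comparing with $\ep m_\mu(\cdot /\ep, z/\ep)$ for $\mu$ depending on $g$ and $T$, transfers the pointwise bound to a neighborhood; and Lemma~\ref{eggg} upgrades this to uniformity over $x \in B_{TR}$. Intersecting the resulting events over a countable dense family in $\BUC(\Rd)$ (using continuity of $u^\ep$ in $g$ from the comparison principle) and over rational $T$ yields the quantifiers in the statement.

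The hard part will be constructing the barrier in the first step. It must simultaneously dominate $H(0, x/\ep)$, which is not uniformly bounded below due to weak coercivity, and absorb the time derivative of $u^\ep$, while keeping $\xi$ stationary and integrable enough for \eqref{e.wkcoer} to deliver a finite essential infimum. The $(1 - |y|^2)^{-1/(\m-1)}$ blow-up profile from the proof of Lemma~\ref{l.detcon} is the right spatial ansatz because it handles the weakly coercive Hamiltonian by brute force; the linear-in-$t$ term with a carefully chosen stationary coefficient then takes care of $\partial_t$. A secondary difficulty arises in the spatial oscillation estimate, which requires an a~priori pointwise bound on $u^\ep_t$; this can be obtained from the semigroup property together with the Step~1 bound applied to the time-translates of $g$.
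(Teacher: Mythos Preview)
Your approach is substantially different from the paper's and considerably more laborious. The paper does \emph{not} redo the four-step machinery of Lemma~\ref{GC}; instead it reduces Lemma~\ref{GCt} to Lemma~\ref{GC} in a few lines. First, by monotonicity in $g$ and commutation with constants, it suffices to treat $g\equiv 0$. Then the paper exhibits the explicit supersolution
\[
V^\ep(x,t):= e^{t}\,w^\ep(x,0) + e^{t}\Lambda_1,
\]
where $w^\ep(\cdot,0)$ is the solution of the approximate cell problem~\eqref{macro} at $p=0$. Convexity of $H$ gives $e^{-t}H(e^{t}p,y)\geq H(p,y)-(1-e^{-t})\Lambda_1$, from which a short computation shows $V^\ep$ is a supersolution of~\eqref{e.timeHJ}; and $V^\ep(\cdot,0)=w^\ep(\cdot,0)+\Lambda_1\geq 0$ by~\eqref{wepbb}. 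Comparison yields $u^\ep(x,t,0)\leq V^\ep(x,t)$, and Lemma~\ref{GC} immediately gives the sublinear growth of $V^\ep$ in $x$, uniformly for bounded $t$ and $\ep$. All of the hard work (barriers, ergodic theorem, oscillation estimates, Lemma~\ref{eggg}) has already been done once, in Lemma~\ref{GC}, and is simply inherited.

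Your plan, by contrast, reproves Lemma~\ref{GC} from scratch for the time-dependent problem, and in doing so runs into a genuine obstacle. The spatial oscillation estimate you need requires that $u^\ep(\cdot,t)$ be a subsolution of a \emph{stationary} inequality with bounded right-hand side; this amounts to a uniform lower bound $u^\ep_t\geq -C$. Your proposed justification (``semigroup property together with the Step~1 bound applied to the time-translates of $g$'') does not work: Step~1 gives a bound only at a single point, and feeding $u^\ep(\cdot,h,g)$ back into Step~1 requires $\|u^\ep(\cdot,h,g)\|_\infty<\infty$, which is exactly the unknown. The lower bound on $u^\ep_t$ is in fact available for $g\in C^{1,1}$ via the subsolution $(x,t)\mapsto g(x)-Ct$ (this uses only the \emph{uniform upper bound} on $H$, not Step~1), but even then, passing from ``$u^\ep_t\geq -C$ in the viscosity sense'' to ``$u^\ep(\cdot,t)$ is a stationary subsolution at each fixed $t$'' is a nontrivial step in the viscosity framework that you have not addressed. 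So your route, while perhaps completable, has a gap as written and is in any case far longer than necessary. The insight you are missing is that $w^\ep$ itself furnishes the supersolution.
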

\begin{proof}
In view of~\eqref{ueplb}, we may focus only on obtaining upper bounds for $u^\ep$. By definition, $g \mapsto u^\ep(x,t,g)$ is monotone nondecreasing and so we may suppose that $g$ is constant. Since $g\mapsto u^\ep(x,t,g)$ also commutes with constants, it suffices therefore to prove the sublinear growth estimate for $g\equiv 0$. That is, we need to show only the following:
\begin{equation*} \label{}
\P \Big[ \forall  T>0,   \ \limsup_{|x| \to \infty} \sup_{0<t\leq T} \sup_{0<\ep\leq 1} \frac{|u^\ep(x,t,0)|}{|x|} = 0 \Big] = 1.
\end{equation*}

We proceed by exhibiting an explicit supersolution and appealing to the comparison principle. The supersolution is
\begin{equation*} \label{}
V^\ep(x,t):= e^t w^\ep(x,0) + e^t \Lambda_1,
\end{equation*}
where $w^\ep(x,p)$ is, as in the previous section, the solution of~\eqref{macro}. The convexity of $H$ and~\eqref{e.Hsubq} imply that, for every $p\in \Rd$ and $\lambda \geq 1$, 
\begin{equation*} \label{}
\lambda^{-1} H(\lambda p,y) \geq H(p,y) - \left(1-\lambda^{-1} \right)H(0,y) \geq H(p,y) - \left(1-\lambda^{-1} \right) \Lambda_1.
\end{equation*}
Using this with $q=0$ fixed and $\lambda=e^t$, we find that, for each $t>0$, the function $w^\ep(\cdot,0)$ satisfies the inequality
\begin{equation*} \label{}
w^\ep - \ep \tr\left(A\left(\frac x\ep\right)D^2 w^\ep\right) + e^{-t} H\left( e^{t} Dw^\ep,\frac x\ep \right) \geq - \left( 1- e^{-t}\right) \Lambda_1  \quad \mbox{in}  \ \Rd.
\end{equation*}
From this it follows  that $V^\ep$ satisfies
\begin{equation*} \label{}
V^\ep_t - \ep\tr\left( A\left( \frac x\ep \right) D^2V^\ep \right) + H\left( DV^\ep,\frac x\ep \right) \geq 0 \quad \mbox{in} \ \R \times [0,\infty). 
\end{equation*}
Since $V^\ep$ is bounded below by $0$ uniformly in $\Rd \times [0,\infty)$, by comparing $V^\ep$ to any function in the admissible class in~\eqref{uepform2} using the comparison principle, we find that, for all $(x,t) \in \Rd \times [0,\infty)$ and every realization of the coefficients,
\begin{equation*} \label{}
u^\ep(x,t,0) \leq V^\ep(x,t). 
\end{equation*}
 According to~Lemma~\ref{GC}, 
\begin{equation*} \label{}
\P \Big[ \forall  T>0,   \ \limsup_{|x| \to \infty} \sup_{0<t\leq T} \sup_{0<\ep\leq 1} \frac{|V^\ep(x,t)|}{|x|} = 0 \Big] = 1.
\end{equation*}
This yields the lemma.
\end{proof}

As a consequence of~Lemma~\ref{GCt}, the lower bound~\eqref{ueplb} and the comparison principle (see~\cite[Theorem 2.3]{AT}), we obtain
\begin{multline*} \label{}
\P \Big[ \forall  \ep > 0, \forall g \in \BUC(\Rd), \ (x,t) \mapsto u^\ep(x,t,g) \ \mbox{belongs to $C(\Rd \times(0,\infty))$ and is the} \\ \mbox{unique solution of~\eqref{e.timeHJ} which, for all $T>0$, is bounded below on $\Rd \times [0,T)$} \Big] = 1.
\end{multline*}

\subsection{Homogenization}

In this subsection we complete the proof of Theorem~\ref{Hg}. Throughout we take $u(x,t,g)$ to be the unique solution of the homogenized problem
\begin{equation} \label{homgIVP2}
\left\{ \begin{aligned}
& u_t + \overline H( Du ) = 0 &  \mbox{in} & \ \Rd \times (0,\infty), \\
& u = g & \mbox{on} & \ \Rd \times \{ 0 \}.
\end{aligned} \right.
\end{equation}
In view of the growth condition~\eqref{Hgrth}, the problem~\eqref{homgIVP2} indeed possesses a unique solution, and it is given by the \emph{Hopf-Lax formula}
\begin{equation*} \label{}
u(x,t,g):= \inf_{y\in \Rd} \left( t\overline L\left( \frac{x-y}{t} \right) + g(y) \right)
\end{equation*}
where $\overline L :\Rd \to\R$ is the Legendre-Fenchel transform of $\overline H$, that is,
\begin{equation*} \label{}
\overline L(z):= \sup_{p\in\Rd} \left( p\cdot z - \overline H(p) \right).
\end{equation*}
Note that $\overline L$ is continuous, convex and satisfies $|z|^{-1}\overline L(z) \rightarrow +\infty$ as $|z| \to \infty$ (c.f.~Evans~\cite{Ebook}).

A proof that the Hopf-Lax formula defines a viscosity solution of~\eqref{homgIVP2} can be found for example in~Evans~\cite[Chapter 10]{Ebook} under the assumption that $g\in C^{0,1}_{\mathrm{loc}}(\Rd)$. It is  easy to extend this to the case that $g\in \BUC(\Rd)$ using the monotonicity of the Hopf-Lax formula in $g$ and the stability of viscosity solutions under local uniform convergence. The uniqueness of this solution follows from classical comparison principles for first-order equations. 

\smallskip

We now present the proof of the main result.

\begin{proof}[{\bf Proof of Theorem~\ref{Hg}}]
The theorem follows from Proposition~\ref{p.SH} by a variation of the classical perturbed test function argument first introduced by Evans~\cite{E2}. This comparison argument is entirely deterministic. The fact that the functions $u^\ep$ are not uniformly equi-Lipschitz continuous causes a technical difficulty which is overcome by the use of the parameter $\lambda$ in Step~1, an idea which first appeared in~\cite{AS1}.

To setup the argument, we let the events~$E_2$ and~$E_4$ be defined as in the proof of Proposition~\ref{p.SH} and set
\begin{multline*} \label{}
E_5:=  \Bigg\{ (\sigma,H) \in \Omega \,:\, \forall g\in \BUC(\Rd), \ \forall R>0, \\  \limsup_{\ep \to 0} \sup_{(x,t)\in B_R\times [0,R)} \left| u^\ep(x,t,g) - u(x,t,g) \right| = 0 \Bigg\}.
\end{multline*}
We claim that
\begin{equation} \label{e.PTFM}
E_2 \cup E_4  \subseteq E_5.
\end{equation}
Since $\P[E_2 \cap E_4] = 1$ by Lemma~\ref{GC} and Proposition~\ref{p.SH}, the theorem follows from~\eqref{e.PTFM}.

\smallskip

For the rest of the argument, we fix $(\sigma,H)\in E_2\cap E_4$, $g\in \BUC(\Rd)$ and $R>0$ and argue that 
\begin{equation*} \label{}
\limsup_{\ep \to 0} \sup_{(x,t)\in B_R\times [0,R)} \left| u^\ep(x,t,g) - u(x,t,g) \right| = 0.
\end{equation*}
By the comparison principle (c.f.~\cite{AT}), the flow $g\mapsto u^\ep(\cdot,t,g)$ is monotone nondecreasing as well as a contraction mapping on $L^\infty(\Rd)$. We may therefore assume without loss of generality that $g\in C^{1,1}(\Rd)$. For notational convenience we henceforth drop the dependence of $u$ and $u^\ep$ on $g$. 

We first argue that 
\begin{equation} \label{e.maintcl}
U(x,t):= \limsup_{\ep \to 0}u^\ep(x,t) \leq u(x,t). 
\end{equation}
By the comparison principle, it suffices to check that $U$ is a subsolution of the limiting equation and $U(\cdot,0) \leq g$. We handle these claims in the next two steps. 

\emph{Step 1.} To check that $U$ is a subsolution of the limiting equation, take a smooth test function $\psi\in C^\infty(\Rd\times(0,\infty))$ and a point $(x_0,t_0) \in\Rd \times (0,\infty)$ so that
\begin{equation}\label{Hg-1}
U-\psi \quad \text{has a strict local maximum at}\ (x_0,t_0).
\end{equation}
We must show that
\begin{equation}\label{Hg-goal}
\psi_t(x_0,t_0)+\barH(D\psi(x_0,t_0)) \le 0.
\end{equation}
Arguing by contradiction, we suppose on the contrary that 
\begin{equation}\label{Hg-2}
\eta := \psi_t(x_0,t_0)+\barH(D\psi(x_0,t_0)) >0.
\end{equation}
With $p_0:=D\psi(x_0,t_0)$ and $\lambda>1$ a constant to be selected below, we introduce the perturbed test function
\begin{equation*} \label{}
\psi^\ep(x,t):=\psi(x,t)+ \lambda w^\ep(x,p_0).
\end{equation*}
It is appropriate to compare $\psi^\ep$ to $u^\ep$, and to this end we must check that, for $\ep,r>0$ sufficiently small, $\psi^\ep$ is a solution of the inequality
\begin{equation} \label{Hg-3}
\psi^\ep_t -\ep \tr\left(A\left(\frac x\ep\right) D^2 \psi^\ep \right) + H\left( D\psi^\ep,\frac x\ep \right)\ge \frac16 \eta 
\quad \text{in} \ B\left(x_0,r\right)\times (t_0-r,t_0+r). 
\end{equation}

Let us admit the claim~\eqref{Hg-3} for the moment and show that it allows us to obtain the desired contraction, completing the proof that $U$ is a subsolution of the limiting equation. Applying the comparison principle (c.f.~\cite[Theorem~2.3]{AT}), in view of~\eqref{Hg-3} and the equation satisfied by~$u^\ep$, we deduce that 
\begin{equation*} \label{}
\sup_{B(x_0,r) \times (t_0-r,t_0+r)} \left( u^\ep-\psi^\ep \right) = \sup_{\partial \left(B(x_0,r) \times (t_0-r,t_0+r) \right)} \left( u^\ep-\psi^\ep \right).
\end{equation*}
This holds for all sufficiently small $r>0$ and $\ep > 0$, and by passing to the limit $\ep\to 0$, using that by $(\sigma,H) \in E_4$ we have that $w^\ep(\cdot,p_0)$ converges to the constant $-\overline H(p_0)$ uniformly on compact subsets of $\Rd$ as $\ep \to 0$, we find that
\begin{equation*} \label{}
\sup_{B(x_0,r) \times (t_0-r,t_0+r)}\left( U - \psi \right) = \sup_{\partial \left(B(x_0,r) \times (t_0-r,t_0+r) \right)} \left( U-\psi \right).
\end{equation*}
This holds for all sufficiently small $r>0$, which contradicts the assumption~\eqref{Hg-1}. 

\smallskip

To check that~\eqref{Hg-3} holds in the viscosity sense, we take a smooth test function $\varphi$ and a point $(x_1,t_1) \in B(x_0,r)\times (t_0-r,t_0+r)$ such that 
\begin{equation*} \label{}
\psi^\ep-\varphi \quad \mbox{has a strict local minimum at} \ (x_1,t_1). 
\end{equation*}
Rewriting this using the definition of $\psi^\ep$, we get
\begin{equation*} \label{}
(x,t) \mapsto w^\ep(x,p_0)-\lambda^{-1}(\varphi-\psi)(x,t) \quad \mbox{has a strict local minimum at} \ (x_1,t_1).
\end{equation*}
Using the equation for $w^\ep$, we find that
\begin{equation}\label{Hg-4}
w^\ep(x_1,p_0)-\ep \tr\left(A\left(\frac {x_1}\ep\right) \lambda^{-1}D^2 (\varphi-\psi)(x_1,t_1) \right) + H\left(p_0+ \lambda^{-1}D(\varphi-\psi),\frac {x_1}\ep \right)\ge 0.
\end{equation}
Using that  $(\sigma,H) \in E_4$ and $\psi$ is smooth, we may select $\ep>0$ sufficiently small and $\lambda$ sufficiently close to~$1$, so that
\begin{equation}\label{Hg-5}
\left|\lambda w^\ep(x_1,p_0)+\barH(p_0)\right|+\left | \ep \tr \left ( A\left ( \frac{x_1}\ep\right)D^2\psi(x_1,t_1) \right)  \right | \le \frac{\eta}{3}.
\end{equation}
Next, by selecting $r>0$ small enough, depending on $\lambda$ and $\psi$, we obtain
$$
(\lambda-1)^{-1}\left|\lambda p_0-D\psi(x_1,t_1)\right| \le |p_0|+(\lambda-1)^{-1}|p_0-D\psi(x_1,t_1)| \leq 2|p_0|.
$$
Using the convexity of $H$ together with the previous line and~\eqref{e.Hsubq}, we discover that
\begin{align*}\label{Hg-6}
\lefteqn{\lambda H\left(p_0+ \lambda^{-1}D(\varphi-\psi)(x_1,t_1),\frac {x_1}\ep \right)} \qquad & \\ 
& \leq H\left(D\varphi(x_1,t_1),\frac{x_1}\ep \right)+(\lambda-1)H\left(\frac{\lambda p_0-D\psi(x_1,t_1)}{\lambda-1},\frac{x_1}\ep \right) \nonumber\\
& \le  H\left(D\varphi(x_1,t_1),\frac{x_1}\ep \right) + \Lambda_1(\lambda-1) \left (2^m|p_0|^{\m}+1\right ). \nonumber
\end{align*}
Taking $\lambda>1$ closer to~1, if necessary, we obtain
\begin{equation} \label{Hg-6}
\lambda H\left(p_0+ \lambda^{-1}D(\varphi-\psi)(x_1,t_1),\frac {x_1}\ep \right) \leq  H\left(D\varphi(x_1,t_1),\frac{x_1}\ep \right)+ \frac{1}{3}\eta.
\end{equation}
Combining \eqref{Hg-4},~\eqref{Hg-5} and~\eqref{Hg-6} yields
\begin{equation}\label{Hg-7}
-\barH(p_0)-\ep\tr\left(A\left(\frac {x_1}\ep\right) D^2 \varphi(x_1,t_1) \right) + H\left(D\varphi,\frac {x_1}\ep \right)\geq -\frac{2}{3}\eta
\end{equation}
and then combining \eqref{Hg-2} and \eqref{Hg-7} gives
\begin{equation*} \label{}
\psi_t(x_0,t_0) -\ep\tr\left(A\left(\frac {x_1}\ep\right) D^2 \varphi(x_1,t_1) \right) + H\left(D\varphi,\frac {x_1}\ep \right)\geq \frac1{3}\eta.
\end{equation*}
By making $r>0$ smaller, if necessary, and using $\varphi_t(x_1,t_1) = \psi(x_1,t_1)$, we obtain 
\begin{equation*} \label{}
\varphi_t(x_1,t_1) -\ep\tr\left(A\left(\frac {x_1}\ep\right) D^2 \varphi(x_1,t_1) \right) + H\left(D\varphi,\frac {x_1}\ep \right)\geq \frac1{6}\eta.
\end{equation*}
This completes the proof of~\eqref{Hg-3} and thus that of Step 1. 

\smallskip

\emph{Step 2.} We next show that $U(\cdot,0) \leq g$ or, more precisely, that for every $R>0$, 
\begin{equation} \label{e.barrclm}
\limsup_{t\to 0} \sup_{x\in B_R} \left( U(x,t) - g(x) \right) \leq 0. 
\end{equation}
To accomplish this, we must construct supersolution barriers from above and apply the comparison principle. Note that this is very easy to do in the uniformly coercive case, we simply use the map $(x,t) \mapsto g(x) + k t$ where $k>0$ is a large constant depending on the constants in the hypotheses and $\| g \|_{C^{1,1}(\Rd)}$. Unfortunately, this function is not a supersolution in the nonuniformly coercive case, and so we need to consider a more elaborate barrier function. Rather than construct a barrier from scratch, we build it from the functions $w^\ep$ and use the fact that these homogenize. 

For each fixed $x_0\in \Rd$, the functions we consider have the form
\begin{equation*} \label{}
V^\ep(x,t):= 2W^{\ep}(x,t) - \phi(x,t),
\end{equation*}
where
\begin{equation*} \label{}
W^\ep (x,t) := e^t w^\ep\!\left(x,\tfrac12 Dg(x_0)\right) + \overline H(\tfrac12 Dg(x_0)) + \frac12 g(x_0) + \frac12 Dg(x_0) \cdot (x-x_0).
\end{equation*}
and 
\begin{equation*} \label{}
\phi(x,t):= - 2\left( 1 + \| g \|_{C^{1,1}(\Rd)} \right) \left( \left( 1+ |x-x_0|^2\right)^{1/2} -1 \right)- k\!\left(e^t-1\right).
\end{equation*}
and $k>0$ is a constant depending only on $g$, $x_0$, and other structural constants, defined by
\begin{equation*} \label{}
k:= 2\Lambda_2 \left( 1 + \| g \|_{C^{1,1}(\Rd)}\right) + \Lambda_1 \left( 2^\m\left( 1 + \| g \|_{C^{1,1}(\Rd)}\right)^\m + 1\right) + 2\Lambda_1\left( 2^{-\m} |Dg(x_0)|^\m+1 \right). 
\end{equation*}
We next derive an supersolution inequality for $W^\ep$. The convexity of $H$ and~\eqref{e.Hsubq} imply that, for every $p,\hat p\in \Rd$ and $\lambda \geq 1$, 
\begin{equation*} \label{}
\lambda^{-1} H(\lambda p +\hat p,y) \geq H(p +\hat p,y) - \left(1-\lambda^{-1} \right)H(\hat p,y) \geq H(p +\hat p,y) - \left(1-\lambda^{-1} \right) \Lambda_1\left( |\hat p|^\m+1 \right). 
\end{equation*}
Using this with $\hat p$ fixed and $\lambda=e^t$, we find that, for each $t>0$, the function $w^\ep(\cdot,\hat p)$ satisfies the inequality
\begin{equation*} \label{}
w^\ep - \ep \tr\left(A\left(\frac x\ep\right)D^2 w^\ep\right) + e^{-t} H\left( q+ e^{t} Dw^\ep,\frac x\ep \right) \geq - \left( 1- e^{-t}\right) \Lambda_1 \left( |\hat p|^\m+1 \right) \quad \mbox{in}  \ \Rd.
\end{equation*}
From this we see that $W^\ep$ satisfies the inequality
\begin{multline*} \label{}
W^\ep_t  -\ep \tr\left(A\left(\frac x\ep\right)D^2 W^\ep\right) + H\left( DW^\ep,\frac x\ep \right) \\ \geq -\left( e^t-1 \right) \Lambda_1 \left( 2^{-\m} \left|Dg(x_0)\right|^\m +1 \right) \quad \mbox{in}  \ \Rd \times (0,\infty).
\end{multline*}
On the other hand, we see by a routine calculation, using the definition of $k$,~\eqref{e.Hsubq} and~\eqref{e.sigbnd}, that $\phi$ is a (smooth) subsolution of the inequality 
 \begin{equation*} \label{}
\phi_t  -\ep \tr\left(A\left(\frac x\ep\right)D^2 \phi\right) + H\left( D\phi,\frac x\ep \right) \leq -2 e^t \Lambda_1 \left( 2^{-\m} \left|Dg(x_0)\right|^\m +1 \right) \quad \mbox{in}  \ \Rd \times (0,\infty).
\end{equation*}
The definition of $k$ has been split into three terms, and we see from~\eqref{e.Hsubq} that the first two terms take care of the contributions from spacial derivatives of $\phi$ and the third term is responsible for the right-hand side. 
 
We may now apply~\cite[Lemma 2.5 and Remark 2.6]{AT} to find that $V^\ep$ is a supersolution of
\begin{equation*} \label{}
V^\ep_t  -\ep \tr\left(A\left(\frac x\ep\right)D^2 V^\ep \right) + H\left( DV^\ep,\frac x\ep \right) \geq 0 \quad \mbox{in} \ \Rd \times (0,\infty). 
\end{equation*}
Therefore the comparison principle implies that, for every $\ep > 0$,
\begin{equation} \label{pinneddn}
u^\ep \leq V^\ep - \inf_{x\in\Rd} \left( V^\ep(x,0) - g(x) \right) \quad \mbox{in} \ \Rd \times [0,\infty).
\end{equation}

Since $w^\ep$ is bounded below (see~\eqref{wepbb}) and~$g$ is bounded, the linearly growing term in $\phi$ ensures that~$V^\ep(\cdot,0)$ is larger than $g$ outside a ball of fixed radius and centered at~$x_0$. But due to the fact that~$\omega=(\sigma,H)$ belongs to $E_4$, we have that, for every~$R>0$,
\begin{equation*} \label{}
\lim_{\ep \to 0} \sup_{x\in B_R} \sup_{0 \leq t\leq R} \left| V^\ep (x,t) - V  (x,t) \right| =0
\end{equation*}
where 
\begin{multline*} \label{}
V(x,t):= 2\left( e^t -1 \right) \overline H\left(\tfrac12 Dg(x_0) \right)+ g(x_0) + Dg(x_0) \cdot(x-x_0) \\ + 2\left( 1 + \| g \|_{C^{1,1}(\Rd)} \right)\left(  \left( 1+ |x-x_0|^2\right)^{1/2}-1\right) + k\!\left( e^t -1 \right).
\end{multline*}
It is routine to check that, for every $x\in \Rd$,
\begin{equation*} \label{}
 g(x) \leq  g(x_0) + Dg(x_0) \cdot(x-x_0) + 2\left( 1 + \| g \|_{C^{1,1}(\Rd)} \right)\left(  \left( 1+ |x-x_0|^2\right)^{1/2}-1\right) = V(x,0).
\end{equation*}
We deduce that 
\begin{equation*} \label{}
\limsup_{\ep \to 0} \inf_{x\in\Rd} \left( V^\ep(x,0) - g(x) \right) \geq 0. 
\end{equation*}
Since $V(x_0,0) = g(x_0)$ and $V$ is uniformly Lipschitz continuous on $\Rd \times [0,1)$ with a constant which is bounded above independently of $x_0$, this inequality combined with~\eqref{pinneddn} yields~\eqref{e.barrclm}.

\smallskip

\emph{Step 3.} 
We complete the proof by arguing that 
\begin{equation} \label{infbusiness}
\liminf_{\ep \to 0}u^\ep(x,t) \geq u(x,t). 
\end{equation}
The argument here is similar the demonstration of~\eqref{e.maintcl}. We omit the proof that the left side of~\eqref{infbusiness} is a supersolution of the limiting equation, since this part is essentially identical to~Step~1 (except that we remark that it is necessary to take $0<\lambda < 1$ in contrast to $\lambda > 1$ as we did above). The second step, which is the analogue of Step~2, is actually much easier because we may produce a single smooth function which is a subsolution of the heterogeneous equation for all~$\ep > 0$. Indeed, since~$H(p,x)$ is uniformly bounded above for bounded~$|p|$, we may take~$k> 0$ large enough, depending only on $\Lambda_1$, $\Lambda_2$ and $\| g \|_{C^{1,1}(\Rd)}$, such that $(x,t)\mapsto g(x) - kt$ is a subsolution of~\eqref{e.timeHJ}. Thus $u^\ep(x,t) \geq g(x) - kt$ for all $\ep > 0$, giving us the desired lower bound at the initial time. 
\end{proof}

\section{The proof of the quenched large deviations principle}
\label{QLD}

In this section we give the proof of Corollary~\ref{c.ldp} and study some properties of the rate function~$\barL$. The argument is due to Varadhan. 

Before giving the demonstration of Corollary~\ref{c.ldp}, let us see how the viscous Hamilton-Jacobi equation arises by considering the asymptotics of the partition function. According to the Feynman-Kac formula, for each $\omega\in \Omega$, the map $(x,t) \mapsto S(t,x,\omega)$ defined in~\eqref{e.Sdef} is a solution of the equation
\begin{equation*} \label{}
S_t - \tr\left(A(y,\omega) D^2S \right) - b(y,\omega)\cdot DS + V(y,\omega)S = 0 \quad \mbox{in} \ \Rd\times\R_+
\end{equation*}
and we have $S(0,\cdot,\omega) \equiv 1$. If we take the (inverse) Hopf-Cole transform of $S$, setting 
\begin{equation*} \label{}
U(x,t,\omega):= -\log S(t,x,\omega)
\end{equation*}
then we check that $(x,t) \mapsto U(x,t,\omega)$ is the unique viscosity solution of the initial-value problem
\begin{equation*} \label{}
\left\{ \begin{aligned}
& U_t - \tr\left(A(y,\omega) D^2U \right) + DU \cdot A(y,\omega) DU + b(y,\omega) \cdot DU - V(y,\omega) = 0 & \mbox{in} & \ \Rd\times \R_+, \\
& U(\cdot,0,\omega) \equiv 0 & \mbox{on} & \ \Rd.
\end{aligned} \right.
\end{equation*}
This suggests the definition~\eqref{e.HforLDP} of $H$. Rescale by setting 
\begin{equation} \label{e.uepU}
u^\ep(x,t,\omega):= \ep U\!\left(\frac x\ep,\frac t\ep,\omega\right), 
\end{equation}
and observe that $u^\ep$ is the solution of~\eqref{e.timeHJ} with $g \equiv 0$. An application of Theorem~\ref{Hg} yields
\begin{equation*} \label{}
\P \left[ \lim_{t\to \infty} \frac{1}{t} U(tx,t,\omega) = \lim_{\ep\to 0} u^\ep(x,1,\omega) = -\barH(0) \ \ \mbox{locally uniformly in} \ x\in \Rd \right]  = 1.
\end{equation*}
This gives the approximate likelihood that a particle survives for a very long time:   
\begin{equation} \label{e.Sasymp}
\sup_{|x|\leq Rt} e^{-\barH(0) t } S(t,tx,\omega) = \exp(o(t))  \quad \mbox{as} \ t\to \infty. 
\end{equation}
(Note that in this context we have $\barH(0) \leq 0$, as can be seen from the fact that $w^\ep \geq 0$ since the zero function is a subsolution of~\eqref{macro}.) In fact, we have just proved Corollary~\ref{c.ldp} in the case $K=U=\Rd$, since, by the duality of the Legendre transform,
\begin{equation*} \label{}
\inf_{y\in \Rd} \barL(y) = -\barH(0).
\end{equation*}

It turns out that by varying the initial condition $g$ in Theorem~\ref{Hg} (taking it to be approximately the characteristic function of $K$ or $U$) and using the Hopf-Lax formula for the solution of the limiting equation, this argument yields a proof of the large deviations principle. Here it is:

\begin{proof}[{\bf Proof of Corollary~\ref{c.ldp}}]
Fix an element $\omega \in \Omega$ belonging to the event inside the probability in the conclusion of Theorem~\ref{Hg}. We prove only the upper bound since the argument for the lower bound is similar. Select a positive, uniformly continuous function $g$ on $\Rd$ such that $g\leq 1$ in $\Rd$ and $g\equiv 1$ on $K$, and observe that
\begin{multline} \label{e.upp1}
- \log Q_{t,x,\omega} \left[ X_t \in sK\right]  \\ \geq \underbrace{-\log E_{x,\omega}\left[ g\left(X_t/s\right)\exp\left( -\int_0^t V(X_s,\omega)\, ds \right) \right]}_{=:U(x,t,\omega;s)} + \log S(t,x,\omega).
\end{multline}
The limit of the second term on the right side is given by~\eqref{e.Sasymp}:
\begin{equation*} \label{}
\lim_{t\to \infty} \frac1t \log S(t,tx,\omega) = \barH(0).
\end{equation*}
Therefore we concentrate on the first term on the right of~\eqref{e.Sasymp}. By the Feynman-Kac formula and an inverse Hopf-Cole change of variables, the function $U$ defined in~\eqref{e.upp1} is a solution of the initial-value problem
\begin{equation*} \label{}
\left\{ \begin{aligned}
& U_t - \tr\left(A(y,\omega) D^2U \right) + DU \cdot A(y,\omega) DU + b(y,\omega) \cdot DU - V(y,\omega) = 0 & \mbox{in} & \ \Rd\times \R_+, \\
& U(\cdot,0,\omega;s) = -\log g(\cdot/s) & \mbox{on} & \ \Rd.
\end{aligned} \right.
\end{equation*}
Rescale by introducing
\begin{equation*} \label{}
u^\ep(x,t,\omega):= \ep U\left(\frac x\ep , \frac t\ep,\omega ; \frac1\ep \right)
\end{equation*}
and notice that $u^\ep$ satisfies the rescaled equation
\begin{multline*}
u^\ep_t - \ep \tr\left(A\left(\frac x\ep,\omega\right) D^2u^\ep \right) + Du^\ep\cdot A\left(\frac x\ep,\omega\right) \! Du^\ep \\ + b\left(\frac x\ep,\omega\right) \cdot Du^\ep - V\left(\frac x\ep,\omega\right) = 0 \quad \mbox{in}  \ \Rd\times \R_+
\end{multline*}
with the initial condition $u^\ep(\cdot,0,\omega) = -\log g$ on $\Rd$.

Since $\omega$ belongs to the event in the conclusion of Theorem~\ref{Hg}, we have
\begin{equation*} \label{}
\lim_{t\to \infty} \frac 1t U\left(tx,t,\omega;t\right) = \lim_{\ep \to 0} u^\ep(x,1,\omega) = u(x,1),
\end{equation*}
where $u = u(x,t)$ is the unique solution of the deterministic problem
\begin{equation*} \label{}
\left\{ \begin{aligned}
& u_t + \barH(Du) = 0 & \mbox{in} & \ \Rd\times \R_+, \\
& u(\cdot,0) = -\log g & \mbox{on} & \ \Rd.
\end{aligned} \right.
\end{equation*}
According to the Hopf-Lax formula, we have
\begin{equation*} \label{}
u(x,t) = \inf_{y\in \Rd} \left( t\barL\left( \frac{x-y}{t} \right) - \log g(y) \right).
\end{equation*}
Combining the last few lines, we obtain
\begin{equation*} \label{}
\lim_{t\to \infty} \frac 1t U\left(tx,t,\omega;t\right) = \inf_{y\in \Rd} \left( \barL\left( x-y \right) - \log g(y) \right).
\end{equation*}
Inserting into~\eqref{e.upp1}, we obtain
\begin{equation*} \label{}
\lim_{t\to\infty} -\frac1t \log Q_{t,tx,\omega} \left[ X_t \in tK\right] \geq \inf_{y\in \Rd} \left( \barL\left( x-y \right) - \log g(y) \right) + \barH(0). 
\end{equation*}
Using the continuity of $\barL$ and taking $g$ to approximate the characteristic function of $K$, we obtain
\begin{equation*} \label{}
\lim_{t\to\infty} -\frac1t \log Q_{t,tx,\omega} \left[ X_t \in tK\right] \geq \inf_{y\in K}  \barL\left( x-y \right) + \barH(0). \qedhere\end{equation*}
\end{proof}

\subsection*{Acknowledgements}
We thank Stefano Olla and Ofer Zeitouni for comments and references. S.~Armstrong thanks the Forschungsinstitut f\"ur Mathematik (FIM) of ETH Z\"urich for support.

\bibliographystyle{plain}
\bibliography{HJhg}

\end{document}